\documentclass[11pt,reqno,a4paper]{amsart}

\usepackage[utf8]{inputenc}
\usepackage[T1]{fontenc}
\usepackage{lmodern}
\usepackage[ngerman,english]{babel}
\usepackage{microtype,csquotes}

\usepackage{latexsym}
\usepackage{amsmath,amssymb,amsthm}
\usepackage{bbm}
\usepackage{mathtools}
\usepackage {enumitem}

\usepackage[backend=biber]{biblatex}  

\addbibresource{nlw-lie.bib}

\renewbibmacro*{publisher+location+date}{%
	\printlist{publisher}%
	\iflistundef{location}
	{\setunit*{\addcomma\space}}
	{\setunit*{\addcomma\space}}%
	\printlist{location}%
	\setunit*{\addcomma\space}%
	\usebibmacro{date}%
	\newunit}

\setlist[enumerate,1]{noitemsep}
\setlength{\topmargin}{-15mm}

\newtheorem{Satz}{Satz}[section]
\newtheorem{Proposition}[Satz]{Proposition} 
\newtheorem{Definition}[Satz]{Definition}     
\newtheorem{Lemma}[Satz]{Lemma}	
\newtheorem{Theorem}[Satz]{Theorem}	
\newtheorem{Corollary}[Satz]{Corollary}	
\theoremstyle{definition}

\newtheorem{Remark}[Satz]{Remark}
\newtheorem{Assumption}[Satz]{Assumption}

\numberwithin{equation}{section} 

\newcommand{\T}{\mathbb{T}} 
\newcommand{\C}{\mathbb{C}} 
\newcommand{\R}{\mathbb{R}} 
\newcommand{\Z}{\mathbb{Z}} 
\newcommand{\N}{\mathbb{N}} 
\newcommand{\eps}{\varepsilon}

\newcommand{\dd}{\, \mathrm{d}}
\newcommand{\iu}{\mathrm{i}}

\newcommand{\supp}{\operatorname{supp}}

\renewcommand{\Re}{\mathrm{Re}}

\makeatletter
\newcommand{\opnorm}{\@ifstar\@opnorms\@opnorm}
\newcommand{\@opnorms}[1]{%
	\left|\mkern-1.5mu\left|\mkern-1.5mu\left|
	#1
	\right|\mkern-1.5mu\right|\mkern-1.5mu\right|
}
\newcommand{\@opnorm}[2][]{%
	\mathopen{#1|\mkern-1.5mu#1|\mkern-1.5mu#1|}
	#2
	\mathclose{#1|\mkern-1.5mu#1|\mkern-1.5mu#1|}
}
\makeatother

\usepackage[open]{bookmark}

\usepackage{hyperref}

\allowdisplaybreaks

\title[Lie splitting for semilinear wave equations]{Error analysis of the Lie splitting for semilinear wave equations with finite-energy solutions}
\author{Maximilian Ruff}
\author{Roland Schnaubelt}
\address{Karlsruhe Institute of Technology, Department of Mathematics, Englerstraße 2, 76131 Karlsruhe, Germany}
\email{maximilian.ruff@kit.edu}
\email{schnaubelt@kit.edu}
\thanks{Funded by the Deutsche Forschungsgemeinschaft (DFG, German Research Foundation) – Project-ID 258734477 – SFB 1173.}
\thanks{We thank the referee for very interesting comments, which led to improvements of the presentation.}
\subjclass[2020]{Primary 65M15; Secondary 35B33, 35L71, 65M12}
\keywords{Energy-critical semilinear wave equation, Lie splitting, error analysis, discrete Strichartz estimates}

\begin{document}
	\selectlanguage{english}
	\pagestyle{headings}
	\begin{abstract}
		We study time integration schemes for $\dot H^1$-solutions to the energy-(sub)critical semilinear wave equation on $\R^3$. We show first-order convergence in $L^2$ for the Lie splitting and convergence order $3/2$ for a corrected Lie splitting. To our knowledge this includes the first error analysis performed for scaling-critical dispersive problems. Our approach is based on discrete-time Strichartz estimates, including one (with a logarithmic correction) for the case of the forbidden endpoint. Our schemes and the Strichartz estimates contain frequency cut-offs.
	\end{abstract}

	\maketitle
	
	\section{Introduction}
	The semilinear wave equation $\partial_t^2u -\Delta u = \pm |u|^{\alpha-1}u$ is one of the most important model problems for dispersive behavior. Its analytical properties are well understood, see \cite{Tao2006} for a survey. In view of the energy equality, $H^1$ (or the homogeneous version $\dot H^1$) is the most natural regularity level for solutions $u(t)$ and data.
	
	On 3D-domains, in the case of powers $\alpha \in (1,3]$ one can study wellposedness by means of the standard tools of evolution equations, whereas the treatment of the case $\alpha \in (3,5]$ is based on dispersive properties. To our knowledge, in numerical analysis the latter situation has not been studied in this setting so far. (Compare \cite{RS} for the case $\alpha \in (1,3)$.) The strategy of the error analysis for such problems goes back to the seminal paper \cite{Lubich} in the case of the semilinear Schrödinger equation. In this work we establish the first error bounds for time integration schemes for the semilinear wave equation in the case $\alpha \in [3,5]$ with finite-energy data on the full space $\R^3$. 
	
	In the present paper we investigate the equation 
	\begin{equation}\begin{aligned} \label{NLW2} \partial_t^2 u &= \Delta u - \mu|u|^{\alpha-1}u, \quad (t,x) \in [0,T] \times \R^3, \\ u(0)&=u^0, \qquad \partial_tu(0)=v^0,
	\end{aligned}\end{equation}
	with finite-energy initial data $(u^0,v^0) \in \dot H^1(\R^3) \times L^2(\R^3)$, as well as parameters $\mu \in \{-1,1\}$ and $\alpha \in [3,5]$. In this setting, a complete local wellposedness theory in the space $\dot H^1(\R^3) \times L^2(\R^3)$ for $(u,\partial_tu)$ has been established, where global existence is known for the defocusing case $\mu=1$. If $\alpha>5$, the problem is ill-posed at least in the focusing case $\mu=-1$. The energy-critical case $\alpha=5$ is much more challenging than the subcritical range $\alpha<5$. The theory is based on Strichartz estimates for the linear problem, see the monographs \cite{Sogge} and \cite{Tao2006}. 
	
	To establish error bounds for time integration schemes, we show various discrete-time Strichartz estimates in Section \ref{SecStrich}. Here one controls discrete-time points $(u(n\tau))_{n\in \Z}$ of the solutions to the linear problem in spaces like $\ell^p(\Z,\ell^q(\R^3))$ by $L^2$-based norms of the initial data, where $\tau \in (0,1]$ is the time-step size. It is easy to see that a naive discrete-time version of results in continuous time fails. Instead, one has to introduce frequency cut-offs $\pi_K$ at level $K \ge 1$; {i.e., $\pi_K=\mathcal{F}^{-1}\mathbbm{1}_{\{|\xi|\le K\}} \mathcal{F}$ with the Fourier transform $\mathcal{F}$.} The estimates then depend on $K\tau$, but are otherwise in complete analogy with the estimates in continuous time. Similar results for the Schrödinger equation have been obtained in \cite{IgnatSplitting} and \cite{ORS}, see also \cite{ORSBourgain} and \cite{FilteredLie} for the case of periodic boundary conditions. Moreover, Strichartz estimates for spatially discrete Schrödinger equations were treated in the seminal works \cite{FullyDiscrete} and \cite{NumDisp}. {In contrast to the Schr\"odinger equation on $\R^n$, in the wave case one
	has to work with frequency-localized estimates and the Littlewood--Paley decomposition already for the basic Strichartz inqualities.} In Theorem \ref{ThmEndp} we also derive local-in-time estimates at the forbidden endpoint $(p,q)=(2,\infty)$ with an additional logarithmic correction depending on $K$ and the end-time $T$. Such an inequality was shown in \cite{Joly} for continuous time.
	
	The frequency cut-off has then to be introduced in the time integration schemes, too (as in the Schrödinger case, see \cite{IgnatSplitting}, \cite{ChoiKoh}, and \cite{ORS}). We first analyze the frequency-filtered Lie splitting scheme
	{\begin{equation}\label{LieFilt0}\begin{aligned}
	U_{n+1} &=  e^{\tau A}\big[U_n - \tau \big(0, \pi_{\tau^{-1}}(\mu|u_n|^{\alpha-1} u_n)\big)\big],\\
	U_0 &= (\pi_{\tau^{-1}} u^0, \pi_{\tau^{-1}}v^0)
	\end{aligned}
	\end{equation}
	for $U_n=(u_{n}, v_{n})$ and the wave operator $A(u,v)=(v,\Delta u)$,
	see \eqref{Defs} and \eqref{LieFilt}.} We show first-order convergence in $L^2(\R^3) \times \dot H^{-1}(\R^3)$ for data in $\dot H^1(\R^3) \times L^2(\R^3)$. In contrast to previous works on time discretization of semilinear wave equations in low-regularity regime, such as \cite{Gauckler}, \cite{Averaged} or \cite{KleinGordon}, in our setting there is no uniform spacetime $L^\infty$-bound on the solution $u$ available, since in three dimensions the Sobolev embedding $H^s \hookrightarrow L^\infty$ requires $s>3/2$, but we only assume $\dot H^1$ regularity of $u$. Instead, our error analysis is based on discrete-time Strichartz estimates. We use ideas from \cite{IgnatSplitting}, \cite{ChoiKoh}, and \cite{ORS}, where a similar analysis was performed in the case of the subcritical semilinear Schrödinger equation. {However, because of  the change to a second-order problem  the analysis differs in many points.}
	
	We treat the subcritical and energy-critical cases separately, since the latter requires a much more delicate analysis. 
	Our convergence result for the subcritical case $\alpha<5$ reads as follows. It is proved at the end of Section \ref{SecLie}, see also Remark \ref{RemM}.
	
	\begin{Theorem}\label{Thm1}
	Let $\alpha<5$ and $U = (u,\partial_tu) \in C([0,T],\dot H^1(\R^3)\times L^2(\R^3))$ solve the semilinear wave equation  \eqref{NLW2}. Then there are a constant $C>0$ and a maximum step size $\tau_0>0$ such that the iterates $U_n$ of the filtered Lie splitting scheme \eqref{LieFilt0} or \eqref{LieFilt} satisfy the error bound
	\[\|U(n\tau)-U_n\|_{L^2 \times \dot H^{-1}} \le C\tau \]
	for all $\tau \in (0,\tau_0]$ and $n \in \N_0$ with $n\tau \le T$. The numbers $C$ and $\tau_0$ only depend on $T$, $\alpha$, and $\|U\|_{L^\infty([0,T],\dot H^1 \times L^2(\R^3))}$.
\end{Theorem}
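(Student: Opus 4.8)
The plan is to adapt the Lubich-type error analysis of \cite{Lubich}, in the dispersive form developed for splitting schemes in \cite{IgnatSplitting}, \cite{ChoiKoh}, and \cite{ORS}, replacing the missing spacetime $L^\infty$-bound on $u$ by the discrete-time Strichartz estimates of Section~\ref{SecStrich}. Write (\ref{NLW2}) in first-order form $\partial_t U = AU + F(U)$ with $U = (u,\partial_t u)$, $A = \left(\begin{smallmatrix}0&1\\\Delta&0\end{smallmatrix}\right)$, $F(U) = (0,-\mu|u|^{\alpha-1}u)$, and couple the cut-off level to the step size as prescribed in (\ref{LieFilt}), so that $K\tau$ stays bounded while $K^{-1}\lesssim\tau$; since $e^{tA}$ is an isometry on both $L^2\times\dot H^{-1}$ and $\dot H^1\times L^2$, the truncation of the data $(1-\pi_K)U^0$ then contributes only $O(\tau)$ to the $L^2\times\dot H^{-1}$-error. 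The nonlinear half-flow $\partial_t V = F(V)$ is the explicit map $V\mapsto V+\tau F(V)$ (its first component is frozen), so the exact solution obeys $U(t) = e^{tA}U^0 + \int_0^t e^{(t-s)A}F(U(s))\dd s$ while the filtered Lie iterates obey the discrete Duhamel formula $U_n = e^{n\tau A}\Pi_K^n U^0 + \tau\sum_{k=0}^{n-1}e^{(n-k)\tau A}\Pi_K F(\Pi_K U_k)$ (up to the precise placement of the cut-offs in (\ref{LieFilt})). Sampling the exact integral on the grid and substituting $s=k\tau+\sigma$, the error $e_n = U(n\tau)-U_n$ satisfies
\[
  e_n = \rho_n + \tau\sum_{k=0}^{n-1}e^{(n-k)\tau A}\,\Pi_K\bigl[F(\Pi_K U(k\tau))-F(\Pi_K U_k)\bigr],
\]
where $\rho_n = e^{n\tau A}(1-\Pi_K^n)U^0 + \sum_{k=0}^{n-1}e^{(n-k)\tau A}D_k$ collects the frequency-truncation error of the data and the local defects
\[
  D_k = \int_0^\tau\Bigl[e^{-\sigma A}F(U(k\tau+\sigma)) - \Pi_K F(\Pi_K U(k\tau))\Bigr]\dd\sigma,
\]
which depend only on the exact solution.

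The heart of the proof — and the step I expect to be the main obstacle — is to show $\|\rho_n\|_{L^2\times\dot H^{-1}}\lesssim\tau$ together with its analogue in the discrete Strichartz norms of Section~\ref{SecStrich}. Since $u(k\tau)$ lies only in $\dot H^1$, one cannot Taylor-expand $D_k$ in $\tau$; instead I would keep the $\sigma$-integral and split its integrand into the dispersive defect $(e^{-\sigma A}-1)F(U(k\tau))$, the time increment $e^{-\sigma A}\bigl(F(U(k\tau+\sigma))-F(U(k\tau))\bigr)$, and the frequency-truncation defect $F(U(k\tau))-\Pi_K F(\Pi_K U(k\tau))$. For the increment one writes $F(U(k\tau+\sigma))-F(U(k\tau)) = \int_{k\tau}^{k\tau+\sigma}(0,g'(u)\,\partial_tu)\dd t$ with $g(s) = -\mu|s|^{\alpha-1}s\in C^1$, and controls $g'(u)\partial_tu \sim |u|^{\alpha-1}\partial_tu$ not pointwise in time but in a dual Strichartz space at the $\dot H^{-1}$-level, $L^{\tilde\gamma'}_t L^{\tilde r'}_x([0,T])$; this is where the finite-energy regularity and, crucially, the \emph{strict} subcriticality $\alpha<5$ leave just enough room, and, depending on the size of $\alpha$, it is here that the forbidden-endpoint estimate of Theorem~\ref{ThmEndp} (with its logarithmic loss, harmless on short intervals) may enter. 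Each of the three pieces of $D_k$ then has the form $\tau$ times a grid sample of a fixed spacetime function whose relevant dual Strichartz norm on $[0,T]$ is bounded in terms of $\|U\|_{L^\infty([0,T],\dot H^1\times L^2)}$ and $\alpha$; the truncation piece is treated likewise via the Bernstein- and dispersive-type inputs behind Section~\ref{SecStrich} with $K\asymp\tau^{-1}$. Feeding this into the inhomogeneous (retarded) discrete Strichartz estimate — which performs the summation over $k$ without producing the fatal factor $T/\tau$ — yields $\|\rho\|\lesssim\tau$ in both the $L^2\times\dot H^{-1}$- and the discrete Strichartz norms.

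For the remaining sum in the error recursion I would use the pointwise bound $\bigl||a|^{\alpha-1}a-|b|^{\alpha-1}b\bigr|\lesssim(|a|^{\alpha-1}+|b|^{\alpha-1})|a-b|$, Hölder, and again the discrete Strichartz estimates to bound it by a constant — small for $\tau$ small, or absorbable on a short interval — times the discrete Strichartz norm of $e$, \emph{provided} the iterates $U_k$ obey a priori Strichartz bounds. These are obtained by a bootstrap: on a time interval of length $T_1 = T_1(\|U\|_{L^\infty\dot H^1\times L^2})$ — a length that is profile-independent precisely because $\alpha<5$ — one shows, for $\tau\le\tau_0(\|U\|_{L^\infty\dot H^1\times L^2})$, that the discrete $\ell^\infty_\tau(\dot H^1\times L^2)$- and Strichartz norms of $(U_k)$ stay within a fixed multiple of those of the exact solution, by running the contraction and continuation argument of the continuous local theory on the discrete Duhamel formula. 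A discrete Gronwall inequality then closes the error recursion and gives $\|e_n\|_{L^2\times\dot H^{-1}}\lesssim\tau$ for $n\tau\le T_1$. Since the exact solution exists on all of $[0,T]$ with $\dot H^1\times L^2$-norm at most $\|U\|_{L^\infty([0,T],\dot H^1\times L^2)}$, one iterates this over $O(T/T_1)$ consecutive subintervals — the error at the start of each subinterval being amplified by a constant factor — to cover every $n$ with $n\tau\le T$; bookkeeping the dependencies shows that the resulting constant $C$ and maximal step size $\tau_0$ depend only on $T$, $\alpha$, and $\|U\|_{L^\infty([0,T],\dot H^1\times L^2)}$. Besides the low-regularity consistency analysis, I expect the delicate point to be the uniform bookkeeping of exponents so that all dual Strichartz norms of $|u|^{\alpha-1}u$ and $|u|^{\alpha-1}\partial_tu$ close simultaneously for every $\alpha\in[3,5)$, staying off the forbidden endpoint or only touching it logarithmically.
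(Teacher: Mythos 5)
Your outline reproduces the paper's strategy in its essentials: the error recursion combining a frequency-truncation term, a local defect depending only on the exact solution, and a stability term; the consistency analysis of the defect at $\dot H^1$ regularity by keeping the $\sigma$-integral and differentiating rather than Taylor-expanding (the paper merges your ``dispersive defect'' and ``time increment'' into the single derivative $\frac{\mathrm d}{\mathrm d\sigma}[e^{-\sigma A}G(\Pi_{\tau^{-1}}U(t_n+\sigma))]$, giving the integrand $(-g(\pi u),\,g'(\pi u)\pi\partial_tu)$, which is bounded via Proposition \ref{PropDiskrStrichU}); the absorption on a window of length $T_1$ thanks to the H\"older gain $T_1^{1-\frac{\alpha-1}{4}}>0$, available precisely because $\alpha<5$; and the iteration over $O(T/T_1)$ windows with constant amplification. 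The one genuinely different component is how you obtain the a priori discrete Strichartz bounds on the iterates $U_k$: you propose an independent bootstrap on the discrete Duhamel formula (a discrete analogue of local wellposedness), whereas the paper writes $u_n=\pi_{\tau^{-1}}u(t_n)-e_n$ and folds the Strichartz norm of the error itself into the induction quantity, via the two-component norm (\ref{DefNormSubkrit}). Your route is viable in the subcritical case (it is essentially Step 1 of Proposition \ref{PropLokStab} transplanted to $\alpha<5$, where no smallness of the free evolution is needed) and would even let you run the final Gronwall argument in the single norm $\ell^\infty_\tau(L^2\times\dot H^{-1})$; the paper's route avoids a separate bootstrap at the price of tracking two norms with different rates. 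Two inaccuracies to correct: first, the forbidden-endpoint estimate of Theorem \ref{ThmEndp} is not needed anywhere in Theorem \ref{Thm1} — the defect is handled entirely with the admissible triples $(p_\alpha,3(\alpha-1),1)$ and $(4,3(\alpha-1),\gamma_\alpha)$, and the endpoint only enters for the corrected Lie splitting. Second, your claim that $\|\rho\|\lesssim\tau$ holds ``in both the $L^2\times\dot H^{-1}$- and the discrete Strichartz norms'' is wrong as stated: the discrete Strichartz estimate at level $(4,3(\alpha-1),\gamma_\alpha)$ loses $|\nabla|^{\gamma_\alpha}$, i.e.\ a factor $\tau^{-\gamma_\alpha}$ after Bernstein, so the defect (and the error) in $\ell^4_\tau L^{3(\alpha-1)}$ only converges at rate $\tau^{1-\gamma_\alpha}$ with $\gamma_\alpha=\frac54-\frac{1}{\alpha-1}<1$; this is harmless for your bootstrap route, where that norm of the error need not be tracked, but it is exactly the point that forces the paper's weighted norm (\ref{DefNormSubkrit}) and that makes the critical case $\alpha=5$ (where $\gamma_\alpha=1$) inaccessible by this argument.
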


	 For the critical case $\alpha=5$, the next result is shown at the end of Section \ref{SecKrit}.
	
	\begin{Theorem}\label{ThmKrit}
	Let $\alpha=5$ and $U = (u,\partial_tu) \in C([0,T],\dot H^1(\R^3)\times L^2(\R^3))$ with $u \in L^4([0,T],L^{12}(\R^3))$ solve the semilinear wave equation \eqref{NLW2}. Then there are a constant $C>0$ and a maximum step size $\tau_0 >0$ such that the iterates $U_n$ of the filtered Lie splitting scheme \eqref{LieFilt} satisfy the error bound
	\[\|U(n\tau)-U_n\|_{L^2 \times \dot H^{-1}} \le C\tau \]
	for all $\tau \in (0,\tau_0]$ and $n \in \N_0$ with $n\tau \le T$. The number $C$ only depends on $T$, $\|U\|_{L^\infty([0,T],\dot H^1 \times L^2(\R^3))}$, and $\|u\|_{L^4([0,T],L^{12}(\R^3))}$, whereas $\tau_0$ only depends on $T$, $u^0$, and $v^0$.
\end{Theorem}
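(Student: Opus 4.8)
The plan is to adapt the Lubich-type error analysis of \cite{Lubich}, as developed for frequency-filtered splitting schemes in \cite{IgnatSplitting,ChoiKoh,ORS}, to the wave equation and, crucially, to the scaling-critical regime. Let $\ex^{t\mathcal A}$ be the free wave group, which is unitary on $\dot H^\gamma(\R^3)\times\dot H^{\gamma-1}(\R^3)$ for every $\gamma\in\R$ and satisfies $\mathcal A(0,f)=(f,0)$, and write $N(U)=(0,-\mu|u|^4u)$. Then the exact solution obeys $U((n+1)\tau)=\ex^{\tau\mathcal A}U(n\tau)+\int_0^\tau\ex^{(\tau-s)\mathcal A}N(U(n\tau+s))\dd s$, while the iterates of (\ref{LieFilt}) satisfy a recursion of the form $U_{n+1}=\ex^{\tau\mathcal A}\Pi_K(U_n+\tau N(\Pi_K U_n))$, where $\Pi_K$ is the componentwise cut-off built from $\pi_K$; thus $U_n\in\operatorname{ran}\Pi_K$ for all $n$ and $U_0=\Pi_K(u^0,v^0)$. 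Since $\|(1-\pi_K)U(n\tau)\|_{L^2\times\dot H^{-1}}\lesssim K^{-1}\|U\|_{L^\infty(\dot H^1\times L^2)}$, it suffices to estimate the filtered error $\tilde e_n:=\Pi_KU(n\tau)-U_n$, which starts at $\tilde e_0=0$; I would fix $K=\lceil 1/\tau\rceil$, so that $K^{-1}=O(\tau)$ while $K\tau$ stays bounded and the constants in the discrete-time Strichartz estimates of Section~\ref{SecStrich} stay uniform. Subtracting and iterating the two recursions exhibits $\tilde e_N$ as a consistency sum over the local defects $d_n$ plus a discrete Duhamel sum over the nonlinearity differences $N(\Pi_KU(n\tau))-N(U_n)$.

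The local defect decomposes into a splitting part $\int_0^\tau(\ex^{(\tau-s)\mathcal A}-\ex^{\tau\mathcal A})N(U(n\tau+s))\dd s$, a time-averaging part $\int_0^\tau\ex^{\tau\mathcal A}(N(U(n\tau+s))-N(U(n\tau)))\dd s$, and a frequency-mismatch part $\tau\ex^{\tau\mathcal A}(N(U(n\tau))-N(\Pi_KU(n\tau)))$, all to be bounded using \emph{only} $L^2$-based spacetime norms of $u$, $\partial_tu$ and $|u|^4u$, since no $L^\infty_x$-bound on $u$ is available at $\dot H^1$-regularity in three dimensions. For the splitting part one writes $\ex^{(\tau-s)\mathcal A}-\ex^{\tau\mathcal A}=-\int_0^s\ex^{(\tau-s+r)\mathcal A}\mathcal A\dd r$, uses $\mathcal AN(U)=(-\mu|u|^4u,0)$ and the unitarity of the group on $L^2\times\dot H^{-1}$, together with $|u|^4u\in L^1_tL^2_x$ on $[0,T]$ (from the interpolation $\|u(t)\|_{L^{10}}\lesssim\|u(t)\|_{\dot H^1}^{1/5}\|u(t)\|_{L^{12}}^{4/5}$), to get an $O(\tau)$ contribution; the time-averaging part uses $N(U(t))-N(U(n\tau))=\int_{n\tau}^t(0,-5\mu|u|^4\partial_\sigma u)\dd\sigma$ with $|u|^4\partial_\sigma u\in L^1_t\dot H^{-1}_x$ (from $\||u|^4\partial_\sigma u\|_{\dot H^{-1}}\lesssim\|u\|_{L^{12}}^4\|\partial_\sigma u\|_{L^2}$), again giving $O(\tau)$. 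The frequency-mismatch part carries a factor $K^{-1}=O(\tau)$ times sums like $\sum_n\tau\|\pi_Ku(n\tau)\|_{L^{12}}^4$ of pointwise-in-time samples; these are \emph{not} controlled by a Bernstein inequality — which loses powers of $K$ — but by the discrete Strichartz estimates of Section~\ref{SecStrich} applied to the Duhamel representation of $u$ (the free part via the homogeneous estimate, the Duhamel part via an inhomogeneous one, using $N(U)\in L^1_tL^2_x$). In total the defects contribute $O(\tau)$ to $\|\tilde e_N\|_{L^2\times\dot H^{-1}}$, with constants depending only on $T$, $\|U\|_{L^\infty(\dot H^1\times L^2)}$ and $\|u\|_{L^4L^{12}}$.

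The criticality is absorbed by subdivision. Since $u\in L^4([0,T],L^{12}(\R^3))$, choose $0=T_0<\dots<T_J=T$ with $\|u\|_{L^4([T_{j-1},T_j],L^{12})}\le\eta$ for a small absolute $\eta>0$; $J$ and the $T_j$ depend on $u$, hence on $(u^0,v^0)$, which is the source of the dependence of $\tau_0$. On each block $I_j=\{n:T_{j-1}\le n\tau<T_j\}$ one first shows, for $\tau$ small, that the numerical solution inherits a small discrete Strichartz bound $\|(u_n)_{n\in I_j}\|_{\ell^4_\tau L^{12}}\le C\eta$: the discrete free evolution over $I_j$ coincides with the continuous free evolution of $U(T_{j-1})$ sampled (the linear subflow being solved exactly), which is small in $L^4(I_j,L^{12})$ because $u$ itself is small there and the Duhamel correction is $\lesssim\|U\|_{L^\infty\dot H^1}\|u\|_{L^4(I_j,L^{12})}^4$; band-limitedness makes the passage to Riemann sums harmless, and the discrete nonlinear Duhamel bootstrap then closes. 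The samples $(\pi_Ku(n\tau))_{n\in I_j}$ are small in the same way. Expanding $N(\Pi_KU(n\tau))-N(U_n)=\int_0^1 N'((1-\theta)\Pi_KU(n\tau)+\theta U_n)\,\tilde e_n\dd\theta$ about an interpolated base point, the quartic coefficient has $\ell^4_\tau(I_j,L^{12})$-norm $\le C\eta$ for every $\theta$, so the discrete Duhamel sum of the nonlinearity differences over $I_j$ is bounded in $\ell^\infty_n(L^2\times\dot H^{-1})$ by $C\eta^4\|\tilde e\|_{\ell^\infty_n(I_j,L^2\times\dot H^{-1})}$ — the decisive point being that this never needs a Strichartz norm of $\tilde e_n$ itself (which is unavailable, the error living at the $L^2$-level). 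Fixing $\eta$ with $C\eta^4\le\tfrac12$ absorbs this term, replacing the Gronwall argument of the subcritical case, and gives $\|\tilde e\|_{\ell^\infty(I_j)}\le 2(\|\tilde e_{n_{j-1}}\|_{L^2\times\dot H^{-1}}+C_j\tau)$ with $n_{j-1}=\lceil T_{j-1}/\tau\rceil$; iterating over $j=1,\dots,J$ with $\tilde e_0=0$ yields $\|\tilde e_n\|_{L^2\times\dot H^{-1}}\le C\tau$ for all $n\tau\le T$, hence the claim, with $C$ depending on $T$, $\|U\|_{L^\infty(\dot H^1\times L^2)}$ and $\|u\|_{L^4L^{12}}$, and $\tau_0$ on $T$, $u^0$, $v^0$.

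The main obstacle is to carry out the consistency and stability estimates \emph{simultaneously} under three competing constraints. The low regularity leaves no spacetime $L^\infty$-bound on $u$, so every term must be closed through $L^2$-based Strichartz norms of $u$, $\partial_tu$ and $|u|^4u$, repeatedly trading a lost derivative against the $\dot H^{-1}$-component and the unitarity of the wave group; in the energy-critical case these multilinear estimates only close with the smallness of $\|u\|_{L^4L^{12}}$, which forces the subdivision and the block-wise bootstrap propagating a small discrete Strichartz norm for the numerical solution. Moreover the 3D wave equation admits no nontrivial Strichartz estimate at the $L^2$-level, so $\tilde e_n$ is controlled only in the energy norm $\ell^\infty_n(L^2\times\dot H^{-1})$, and the nonlinear error estimate has to be engineered — via the interpolated base point in the mean-value expansion of the nonlinearity difference, and via the bootstrap for the numerical Strichartz norm rather than a comparison at $\dot H^1$-regularity — so that no Strichartz norm of the error is ever invoked. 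Finally, the whole analysis must keep $K\tau$ bounded while $K^{-1}=O(\tau)$, so that the constants in the discrete Strichartz estimates — including the logarithmically corrected endpoint estimate of Theorem~\ref{ThmEndp}, should it enter the defect analysis — remain uniform; if it does enter, it will do so against a higher power of $\tau$, so that the accompanying $\log(1/\tau)$ is harmless.
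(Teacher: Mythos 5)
Your overall architecture (project onto frequencies $\le K\sim\tau^{-1}$, split the defect into a splitting/time-averaging/frequency-mismatch part, subdivide $[0,T]$ into blocks where $\|u\|_{L^4L^{12}}$ is small, and absorb the stability term via smallness of the quartic coefficient rather than Gr\"onwall) matches the paper's strategy, and your defect estimates are sound. But there is a genuine gap in the block-wise bootstrap for the smallness of the \emph{numerical} discrete Strichartz norm $\|u_n\|_{\ell^4_\tau(I_j,L^{12})}$. You assert that ``the discrete free evolution over $I_j$ coincides with the continuous free evolution of $U(T_{j-1})$ sampled.'' It does not: the discrete Duhamel formula restarted at the block boundary has free part $e^{n\tau A}U_{n_{j-1}}$ with the \emph{numerical} iterate $U_{n_{j-1}}$, not $\Pi_K U(T_{j-1})$. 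To transfer smallness you must bound $\|S(t_n)(U_{n_{j-1}}-\Pi_K U(T_{j-1}))\|_{\ell^4_\tau L^{12}}\lesssim\|U_{n_{j-1}}-\Pi_K U(T_{j-1})\|_{\dot H^1\times L^2}$, i.e.\ you need the error at the start of each block to be small in the \emph{energy} norm $\dot H^1\times L^2$. Your argument only yields $\|\tilde e_n\|_{L^2\times\dot H^{-1}}=O(\tau)$, a full derivative too weak; converting via Bernstein costs a factor $K\sim\tau^{-1}$ and leaves an $O(1)$ contribution, so the bootstrap does not close on any block after the first. (You cannot avoid restarting either: the global free part $S(t_n)\Pi_K U^0$ is not small on later blocks, since the subdivision makes $S(\cdot)U(T_{j-1})$ small on $[T_{j-1},T_j]$, not $S(\cdot)U^0$.)

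This is exactly the extra difficulty of the critical case, and the paper devotes a separate argument to it (Step~2 of the proof of Theorem~\ref{ThmKrit} together with Proposition~\ref{PropLokStab}): one first proves \emph{qualitative} convergence of the scheme in $\dot H^1\times L^2$ (without rate) by approximating $U(T_m)$ with $H^2\times H^1$ data $Y^m$, establishing first-order convergence in $\dot H^1\times L^2$ for such data (which requires showing persistence of $H^2\times H^1$ regularity via difference quotients), and combining this with Lipschitz continuous dependence on the initial data for both the PDE flow and the numerical flow on each small block. Only this guarantees $\|U(T_m)-U_{N_m}\|_{\dot H^1\times L^2}\le\delta$, which licenses the application of the local stability proposition and hence the smallness of $\|u_n\|_{\ell^4_\tau}$ on each block. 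Your proposal contains no substitute for this regularization step, so as written the stability estimate cannot be initialized beyond the first block. (A further, incidental consequence is that $\tau_0$ depends on the chosen $H^2$ approximants, i.e.\ on $(u^0,v^0)$ itself, which is consistent with the statement but arises for a reason your proposal does not capture.)
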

	The more sophisticated analysis for $\alpha=5$ is reflected by the dependence of the maximum step size $\tau_0$ on the solution itself, rather than just on its norm. A similar behavior occurs in the wellposedness theory, see \cite{Tao2006}. To our knowledge, this above theorem provides the first error analysis of a time discretization for a scaling-critical problem. 
	
	As a first step to higher-order schemes, in the case $\alpha=3$ we study a corrected Lie splitting given in \eqref{KorrLie}, {where we now take the cut-off level $K=\tau^{-3/2}$}. A version of the scheme without frequency filter was recently proposed and analyzed in \cite{KleinGordon} in higher regularity $H^{7/4}(\R^3)$. We can show convergence order $\tau^{3/2}$ for the error in $L^2(\R^3) \times \dot H^{-1}(\R^3)$, again using data in $\dot H^1(\R^3) \times L^2(\R^3)$. Formally, the scheme is of second order due to a well-chosen correction term. This term also leads to an error formula without second-order derivatives. The loss of $\tau^{1/2}$ in our result corresponds to the loss in the Strichartz estimates, so that we believe our result is optimal, compare also \cite{ORS}. In the proof we need the endpoint estimates from Theorem \ref{ThmEndp}. The convergence result is shown at the end of Section \ref{SecKorr}. {Here we use the endpoint versions of Strichartz estimates with logarithmic corrections from Theorem \ref{ThmEndp}, which seem to be new tools in the error analysis.}
	\begin{Theorem}\label{ThmKorr}
		Let $U = (u,\partial_tu) \in C([0,T],\dot H^1(\R^3)\times L^2(\R^3))$ solve the semilinear wave equation \eqref{NLW2}
		with $\alpha=3$. Then there are a constant $C>0$ and a maximum step size $\tau_0 >0$ such that the iterates $U_n$ of the corrected Lie splitting scheme \eqref{KorrLie} with $K=\tau^{-3/2}$ satisfy the error bound
		\[\|U(n\tau)-U_n\|_{L^2 \times \dot H^{-1}} \le C\tau^{\frac32} \]
		for all $\tau \in (0,\tau_0]$ and $n \in \N_0$ with $n\tau \le T$. The numbers $C$ and $\tau_0$ only depend on $T$ and $\|U\|_{L^\infty([0,T],\dot H^1 \times L^2(\R^3))}$.
	\end{Theorem}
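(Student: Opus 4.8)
The plan is to follow the scheme of \cite{Lubich} and its dispersive refinements in \cite{IgnatSplitting,ChoiKoh,ORS}, adapted to the wave equation and to the corrected splitting. Write (\ref{NLW2}) as the first-order system $\partial_t U = AU - F(U)$ with $U = (u,\partial_t u)$, where $\ex^{tA}$ is the wave group (an isometry group on $\dot H^1 \times L^2$ and on $L^2 \times \dot H^{-1}$) and $F(U) = (0,\mu|u|^2u)$ for $\alpha = 3$. The exact solution obeys $U(t_{n+1}) = \ex^{\tau A}U(t_n) - \int_0^\tau\ex^{(\tau-s)A}F(U(t_n+s))\dd s$ with $t_n = n\tau$. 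First I would expand the integrand $g(s) = \ex^{(\tau-s)A}F(U(t_n+s))$ by two integrations by parts in $s$, using $\partial_s\ex^{(\tau-s)A} = -A\ex^{(\tau-s)A}$ together with the fact that $AF(U) = (\mu|u|^2u,0)$ and that the pointwise time-derivative of $F$ along the flow depends on $u$ and $\partial_t u$ but on no derivatives of $u$. This yields the \emph{exact} identity
\[ U(t_{n+1}) = \ex^{\tau A}\Bigl(U(t_n) - \tau F(U(t_n)) - \tfrac{\tau^2}{2}\Psi(U(t_n))\Bigr) + R_n, \qquad R_n = -\int_0^\tau(\tau-s)\bigl(g'(s) - g'(0)\bigr)\dd s, \]
where $\Psi(U) = -AF(U) + \partial_t F(U)$ is exactly the correction term of (\ref{KorrLie}) (and, crucially, involves no derivatives of $u$), and $g'(s) = \ex^{(\tau-s)A}\Psi(U(t_n+s))$. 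Splitting $g'(s) - g'(0) = (\ex^{(\tau-s)A} - \ex^{\tau A})\Psi(U(t_n+s)) + \ex^{\tau A}(\Psi(U(t_n+s)) - \Psi(U(t_n)))$, the only derivatives of $u$ entering $R_n$ come from $A\Psi$ and $\partial_t\Psi$, i.e. from $\Delta(|u|^2u)$ and $|u|^2\partial_t^2 u$; these are reduced to first order by the divergence identity $\Delta(|u|^2u) = \nabla\!\cdot\!(|u|^2\nabla u) - 2\Re(\bar u\nabla u)\cdot\nabla u$ and the equation $\partial_t^2 u = \Delta u - \mu|u|^2 u$, measured against the $\dot H^{-1}$-component. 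The resulting error formula thus involves only $u$, $\nabla u$, $\partial_t u$ and polynomial nonlinearities, never $\Delta u$ or $\partial_t^2 u$.

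Next, with $e_n = U(t_n) - U_n$ in $L^2 \times \dot H^{-1}$, subtract (\ref{KorrLie}) — which up to the placement of the cut-offs $\pi_K$ is the truncation of the identity above — to obtain the error recursion
\[ e_{n+1} = \ex^{\tau A}\Bigl(e_n + \bigl[F(U_n) - F(U(t_n))\bigr] + \tfrac{\tau^2}{2}\bigl[\Psi(U_n) - \Psi(U(t_n))\bigr]\Bigr) + R_n + D_n, \]
where $D_n$ collects the cut-off defects, which are, up to lower order, of the form $\ex^{\tau A}(\mathrm{Id} - \pi_K)(\text{nonlinear terms})$ and carry an extra factor $\tau$. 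Iterating gives $e_n$ as an inhomogeneous discrete Duhamel sum $\sum_{k<n}\ex^{(n-1-k)\tau A}(\cdots_k)$. To this sum I would apply the discrete-time Strichartz estimates of Section \ref{SecStrich} — their inhomogeneous/parametrized versions, all containing $\pi_K$, together with the logarithmically corrected endpoint estimate of Theorem \ref{ThmEndp}. This would bound $e_n$ simultaneously in $\ell^\infty_n(L^2 \times \dot H^{-1})$ and in a discrete Strichartz norm $\mathcal S_n$ over $\{0,\dots,n\}$ modelled on the continuous-time spaces (such as $L^4_tL^{12}_x$) in which $u$ lives at $\dot H^1$-regularity, with constants depending on $K\tau$.

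The core is then the closing estimate. For $\alpha = 3$ the Lipschitz difference is benign, $|F(U_n) - F(U(t_n))| \lesssim (|u_n|^2 + |u(t_n)|^2)|u_n - u(t_n)|$, so in the dual Strichartz norm it is $\lesssim (\|u_n\|_{\mathcal S}^2 + \|u(t_n)\|_{\mathcal S}^2)\|e_n\|_{\mathcal S}$ by Hölder, and the $\Psi$-difference is treated similarly (now also pairing the $\dot H^{-1}$-component of $e_n$ with $\partial_t u \in L^\infty_t L^2_x$). Two a priori facts are needed: the Strichartz norm $\|u\|_{\mathcal S([0,T])}$ is finite because $\alpha = 3$ is energy-subcritical and $\|U\|_{L^\infty_t(\dot H^1 \times L^2)}$ is finite (Strichartz applied to the Duhamel formula for $u$); and a \emph{discrete Strichartz stability bound} $\|(U_k)_{k\le n}\|_{\mathcal S_n} \lesssim \|u\|_{\mathcal S([0,T])} + 1$, proved by a preliminary bootstrap applying the same discrete Strichartz estimates to (\ref{KorrLie}). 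The cut-off defects $D_n$ are controlled by $\|(\mathrm{Id} - \pi_K)w\|_{L^2 \times \dot H^{-1}} \lesssim K^{-1}\|w\|_{\dot H^1 \times L^2}$ and their extra factor $\tau$; the remainders $R_n$, once the formal second derivatives have been removed as above, are estimated in suitable dual Strichartz norms over $[t_n,t_{n+1}]$, one term taking the critical shape $\||u|^2\nabla u\|_{L^1_tL^2_x} \le \|\nabla u\|_{L^\infty_tL^2_x}\|u\|_{L^2_tL^\infty_x}^2$, for which the forbidden endpoint norm is supplied by Theorem \ref{ThmEndp}, at the price of a harmless factor $(\log K)^{1/2} \lesssim |\log\tau|^{1/2}$ multiplying a higher power of $\tau$. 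The value $K = \tau^{-3/2}$ is exactly the balance point between the two opposing $K$-dependencies: a smaller $K$ would push the frequency-truncation error above $\tau^{3/2}$, while a larger $K$ would make the $K\tau$-dependent discrete Strichartz constants (and their $\tau^{1/2}$-loss, which accounts for the gap to the formal order $2$) too costly — so that every contribution, summed over the $\mathcal O(1/\tau)$ steps, is of size $\mathcal O(\tau^{3/2})$.

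Collecting everything yields an estimate of the form $\|e\|_{\mathcal S_n} + \|e\|_{\ell^\infty_n(L^2 \times \dot H^{-1})} \le C\tau^{3/2} + C\bigl(\|u\|_{\mathcal S([0,t_n])}^2 + o_\tau(1)\bigr)\|e\|_{\mathcal S_n} + (\text{superlinear in }\|e\|_{\mathcal S_n})$, with $e_0 = 0$ (or $\mathcal O(\tau^{3/2})$ if the data are also truncated). Partitioning $[0,T]$ into finitely many subintervals on which $\|u\|_{\mathcal S}$ is small enough to absorb the linear term, and running a continuity/induction argument in $n$ on each (the bound being self-improving for $\tau$ small), one propagates $\|e_n\|_{L^2 \times \dot H^{-1}} \le C\tau^{3/2}$ across all of $[0,T]$; tracking the finitely many subinterval constants gives $C$ and $\tau_0$ depending only on $T$ and $\|U\|_{L^\infty_t(\dot H^1 \times L^2)}$. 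I expect the main obstacle to be precisely the low-regularity nonlinear analysis carried out in the discrete setting: verifying that the error formula is genuinely free of second derivatives and estimating its terms (in particular the $\Psi$-differences and the remainder) only through space-time Strichartz norms, establishing the discrete Strichartz stability of (\ref{KorrLie}), and, above all, tracking the interplay of the $K\tau$-factors, the $\tau^{1/2}$-loss, and the logarithmic endpoint term carefully enough to land at exactly order $\tau^{3/2}$ rather than $\tau^{3/2}|\log\tau|^c$ or $\tau^{3/2-\eps}$.
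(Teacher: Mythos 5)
Your proposal follows essentially the same route as the paper: derive the exact error recursion in which the $\varphi_2$-correction removes all second derivatives (the $\Delta u$ from $A H$ cancelling against the $\partial_t^2 u$ from $\partial_t H$ via the PDE, leaving only $g''(u)\bigl[(\partial_t u)^2-(\nabla u)^2\bigr]$ and $g'(u)g(u)$), estimate the local error and the stability terms with the frequency-filtered discrete Strichartz estimates including the endpoint of Theorem \ref{ThmEndp}, control the error simultaneously in $\ell^\infty(L^2\times\dot H^{-1})$ and in discrete Strichartz norms, and close by induction over subintervals. This is exactly the architecture of Section \ref{SecKorr}.

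Two steps are, however, glossed over in a way that matters. First, your accounting of the endpoint cost as a ``harmless factor $(\log K)^{1/2}$'' is quantitatively wrong for $K=\tau^{-3/2}$: the bound in Theorem \ref{ThmEndp} is $\sqrt{K\tau+\log(1+KN\tau)}$, and with $K\tau=\tau^{-1/2}$ the dominant contribution is the \emph{power} loss $\tau^{-1/4}$, not a logarithm. The local error term $d_1=g''(\pi_K u)[(\pi_K\partial_t u)^2-(\nabla\pi_K u)^2]$ picks up this $\tau^{-1/4}$ twice (once from $\|\pi_K u\|_{\ell^2_\tau L^\infty}$, once from the dual endpoint estimate), and it is precisely this $\tau^{-1/2}$ total that reduces the formal $\tau^2$ to $\tau^{3/2}$; your budget happens to close, but only because the two losses you attribute to different sources coincide with the actual ones. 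Second, and more substantively, your treatment of the $\Psi$-stability term --- ``pairing the $\dot H^{-1}$-component of $e_n$ with $\partial_t u\in L^\infty_tL^2_x$'' --- is not a valid estimate: the term is $g'(\pi_K u)\bigl[\pi_K\partial_t u - v_n\bigr]$, i.e.\ a product of $u^2$ with a function controlled only in $\dot H^{-1}$, and one cannot multiply an $\dot H^{-1}$ distribution by an $L^2$ (or even $\dot H^1$) function. The paper needs here the bilinear estimate $\|vw\|_{\dot H^{-1}}\lesssim\|v\|_{\dot H^1}\|w\|_{\dot H^{-1/2}}$ (Lemma \ref{LemProd}, via the fractional Leibniz rule) together with Bernstein's inequality on the frequency-localized error to pass from $\dot H^{-1}$ to $\dot H^{-1/2}$ at the cost $K^{1/2}=\tau^{-3/4}$, which must then be absorbed by the $\tau^{3/2}$-smallness of the $\dot H^{-1}$-error; without this ingredient the stability argument does not close. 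A related omission is the smoothing bound $\|\varphi_2(tA)(z,0)\|_{L^2\times\dot H^{-1}}\lesssim |t|^{-1}\|z\|_{\dot H^{-1}}$, which is what allows the first component of the correction's stability defect to be measured in $\dot H^{-1}$ rather than $L^2$. These are exactly the points where the low-regularity analysis is genuinely delicate, so they should be supplied rather than asserted.
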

	
	\begin{Remark}
		In the defocusing case $\mu = 1$, energy conservation shows that the solutions to \eqref{NLW2} exist globally in time. Moreover, the numbers $C$ and $\tau_0$ from Theorems \ref{Thm1} and \ref{ThmKorr}, as well as the number $C$ from Theorem \ref{ThmKrit}, then only depend on $T$, $\alpha$, $\|\nabla u^0\|_{L^2(\R^3)}$, and $\|v^0\|_{L^2(\R^3)}$. See Remarks \ref{RemDef} and \ref{RemM}.
	\end{Remark}
	
	
	We comment on variants of our results. A straightforward extension is possible to more general nonlinearities with the same growth behavior, and also to the Klein--Gordon case. Furthermore, exploiting the finite speed of propagation for the wave equation, our analysis remains valid if one replaces the spatial domain $\R^3$ by the torus $\T^3$ (where the Strichartz estimates only hold locally in time). {In this setting the above schemes become fully discrete. They are studied in more detail
	in on-going  work by the first-named author.}
	In (most of) these cases, one would have to work with inhomogeneous Sobolev spaces instead of homogeneous (dotted) ones.
	
	We do not analyze the Strang splitting (which is of second order formally) since a preliminary analysis indicates that one only obtains convergence order one for $\dot H^1$-solutions in the critical case $\alpha=5$. For $\alpha=3$, an order $\tau^{4/3}$ seems to be feasible using Strichartz estimates. However, this is inferior to the estimate for the corrected Lie splitting in Theorem \ref{ThmKorr}.
	In \cite{Gauckler} and \cite{Averaged}, second-order convergence of the Strang splitting and closely related \emph{trigonometric integrators} has been shown in case of one space dimension or $H^2$-solutions.
	
	Our analysis does not distinguish between the focusing and defocusing cases. Better results could be possible in the defocusing case, where the energy dominates the $\dot H^1$-norm. For example, an error analysis on unbounded time intervals was done in \cite{SplittingGlobal} in case of Schrödinger equations (under the additional assumption that the initial value lies in the \emph{conformal space}).\smallskip 

	In Section \ref{SecStrich} we establish the needed discrete-time Strichartz estimates, adapting ideas from the continuous time. The local wellposedness theory is recalled in Section \ref{SecWave}. We introduce the Lie splitting with frequency cut-off in Section \ref{SecLie}, and then derive the core error formula \eqref{FehlerRek}. After estimating the error terms in the subcritical case by means of our Strichartz estimates, we show Theorem \ref{Thm1} using a double induction, first iterating within a possible small time interval of size $T_1$, and then performing a recursion over intervals of length $T_1$. The critical case is studied in Section \ref{SecKrit}. Here we use in addition convergence results for $H^2$-solutions in order to make sure that the discrete approximation stays close to the PDE in $\dot H^1$. Moreover, in the argument enters how fast the Strichartz norm $\ell^4L^{12}$ of a time-discrete solution gets small on small time intervals. The last section is devoted to the corrected Lie splitting which has a more sophisticated error formula \eqref{FehlerRekKorr} and thus requires additional estimates of error terms.
	
	\medskip
	
	\textbf{Notation.} We write $A \lesssim B$ (or $A \lesssim_\alpha B$) if $A \le cB$ for a generic constant $c \ge 0$ (depending on quantities $\alpha$). Since we always work on $\R^3$, we abbreviate $L^p$ for $L^p(\R^3)$ etc. We write $\mathcal F$ for the Fourier transform, where we use the convention with the prefactor $(2\pi)^{-3/2}$. We also use the notation $\hat u \coloneqq \mathcal F u$. In the context of Fourier multipliers, we often just write $\xi$ instead of the map $\xi \mapsto \xi$. For $s \in \R$, we use the inhomogeneous and homogeneous Sobolev norms 
	\[
	\|w\|_{H^{s}} = \|(1+|\xi|^2)^{\frac{s}{2}} \hat w\|_{L^2}, \quad
	\|w\|_{\dot H^{s}} = \||\xi|^{s} \hat w\|_{L^2},
	\]
	if $\hat w \in L^1_{\mathrm{loc}}$. The homogeneous Sobolev space $\dot H^s$ is defined as
	\[\dot H^s \coloneqq \{w \in \mathcal S' : \hat w \in L^1_{\mathrm{loc}}\ \text{and}\ \|w\|_{\dot H^s} < \infty\}.\]
	This space is a Hilbert space if and only if $s<3/2$ due to Proposition 1.34 of \cite{Bahouri}. Moreover, Schwartz functions with compact Fourier support in $\R^3 \setminus \{0\}$ are dense in $\dot H^s$ if $s < 3/2$, cf.\ Proposition 1.35 of \cite{Bahouri}. 
	
	Let $p \in [1,\infty]$, $J$ be a time interval and $X$ be a Banach space. We use the Bochner spaces $L^p_JX \coloneqq L^p(J,X)$ with norms
	\[\|F\|_{L^p_JX} = \Big(\int_J\|F(t)\|_X^p\Big)^\frac{1}{p},\]
	and the usual modification for $p=\infty$. In the case $J=[0,T]$ we also write $T$ instead of $J$. If a ``free'' variable $t$ appears in such a Bochner norm, the time integration is taken with respect to $t$. Further we denote by $\ell^p \coloneqq \ell^p(\Z)$ the sequence spaces over the integers and abbreviate $ \ell^pX \coloneqq \ell^p(\Z,X)$ in case of Banach space valued sequences. In order to simplify notation we often write $\|F_n\|_{\ell^p X}$ instead of $\|(F_n)_{n \in \Z}\|_{\ell^p X}$, where again a ``free'' variable $n$ is assumed to be the summation variable. For a stepsize $\tau>0$ and a number $N \in \N_0$, we further introduce scaled norms
	\[ \|F\|_{\ell^p_\tau X} =\|F_n\|_{\ell^p_\tau X} \coloneqq \Big(\tau \sum_{n \in \Z} \|F_n\|_{X}^p \Big)^{\frac{1}{p}} \]
	and the truncated variant
	\[ \|F\|_{\ell^p_{\tau,N} X} =\|F_n\|_{\ell^p_{\tau,N} X} \coloneqq \Big(\tau \sum_{n=0}^N \|F_n\|_{X}^p \Big)^{\frac{1}{p}}. \]
	Note that in the case $p=\infty$, the norm
	$ \|F\|_{\ell^\infty_\tau X} = \sup_{n \in \Z}\|F_n\|_{X} $
	does not depend on $\tau $. For intervals $J \subseteq \R$ we also use the notation
	\[\|F\|_{\ell^p_{\tau}(J,X)} = \|F_n\|_{\ell^p_\tau(J,X)} \coloneqq \Big(\tau \sum_{\substack{n \in \Z\\ n\tau \in J}} \|F_n\|_{X}^p \Big)^{\frac{1}{p}}.  \]
	
	\section{Strichartz estimates}\label{SecStrich}
	Our analysis is based on time-discrete Strichartz estimates for the wave equation, which are established in this section.
	We start with some standard definitions and results regarding the time-continuous case.
	\begin{Definition}\label{DefLP}
		Let $\chi \in C_c^\infty(\R^3)$ be a radial function with $\chi = 1$ on $B(0,1)$, $\operatorname{supp} \chi \subseteq B(0,2)$ and 
		\[ \psi(\xi) \coloneqq \chi(\xi)-\chi(2\xi), \quad \xi \in \R^3.\]
		For every $j \in \Z$ we define
		\[\psi_j(\xi) \coloneqq \psi\Big(\frac{\xi}{2^{j}}\Big), \quad \xi \in \R^3, \quad \text{and}\quad
		P_ju \coloneqq \mathcal F^{-1}(\psi_j\hat u),\quad u \in \mathcal S'.\]
	\end{Definition}	
	These definitions yield $\operatorname{supp}\psi_j \subseteq \{\xi \in \R^3 :2^{j-1} \le  |\xi| \le 2^{j+1}\}$ and the identity
	\[\sum_{j\in\Z}\psi_j(\xi) = 1, \quad \xi \in \R^3 \setminus \{0\}. \]
	We recall that the ``Littlewood--Paley projections'' $P_j$ are bounded in $L^p$ uniformly in $j \in \Z$ and $p \in [1,\infty]$.
	
	We define the operator $|\nabla|=\sqrt{-\Delta}$ via the Fourier multiplier $|\nabla|f \coloneqq \mathcal F^{-1} (|\xi|\hat f)$, and analogously for $\cos(t|\nabla|)$ etc. To solve the wave equation, we will use the half wave group
	\[e^{\iu t |\nabla|} = \mathcal F^{-1} e^{\iu t |\xi|} \mathcal F.\]
	From Proposition III.1.5 of \cite{Sogge}, we recall the kernel bound
	\begin{equation}\label{kernelbound}
	\|\mathcal F^{-1} (e^{\iu t |\xi|}\psi)\|_{L^\infty} \lesssim (1+|t|)^{-1}, \quad t \in \R.
	\end{equation}

	The proof of the Strichartz estimates is based on the following known frequency-localized dispersive inequality. For convenience, we show how it follows from \eqref{kernelbound}.
	\begin{Lemma}\label{LemDisp}
		It holds
		\[\|P_j e^{\iu t |\nabla|} f\|_{L^q} \lesssim 2^{3j(1-\frac{2}{q})}(1+2^j|t|)^{-(1-\frac{2}{q})} \|f\|_{L^{q'}}\]
		for all $j \in \Z$, $t \in \R$, $q \in [2,\infty]$, and $f \in L^{q'}$.
	\end{Lemma}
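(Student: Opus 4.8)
The plan is to deduce the estimate by complex interpolation between the two endpoints $q=2$ and $q=\infty$, feeding in the kernel bound (\ref{kernelbound}) at the latter. For $q=2$ there is essentially nothing to prove: since $e^{\iu t|\nabla|}$ is unitary on $L^2$ and the Littlewood--Paley projections $P_j$ are bounded on $L^2$ uniformly in $j$, one has $\|P_je^{\iu t|\nabla|}f\|_{L^2} \lesssim \|f\|_{L^2}$, and the two exponents $1-\tfrac2q$ on the right-hand side of the asserted inequality both vanish when $q=2$.

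For $q=\infty$ I would pass to the convolution kernel $K_t^j \coloneqq \mathcal F^{-1}(\psi_j e^{\iu t|\xi|})$ of $P_je^{\iu t|\nabla|}$ and exploit the homogeneity of $\xi\mapsto|\xi|$. The substitution $\xi = 2^j\eta$ in the inverse Fourier integral gives the scaling identity $K_t^j(x) = 2^{3j}K_{2^jt}^0(2^jx)$, where $K_t^0 = \mathcal F^{-1}(\psi\, e^{\iu t|\xi|})$ is precisely the function controlled by (\ref{kernelbound}). Hence $\|K_t^j\|_{L^\infty} \lesssim 2^{3j}(1+2^j|t|)^{-1}$, and Young's inequality yields $\|P_je^{\iu t|\nabla|}f\|_{L^\infty} \lesssim 2^{3j}(1+2^j|t|)^{-1}\|f\|_{L^1}$, which is the claimed bound for $q=\infty$.

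It then remains to interpolate. For $q\in[2,\infty]$ put $\theta \coloneqq 1-\tfrac2q \in [0,1]$, so that $\tfrac1q = \tfrac{1-\theta}{2}+\tfrac{\theta}{\infty}$ and $\tfrac1{q'} = \tfrac{1-\theta}{2}+\theta$. Applying the Riesz--Thorin theorem to the operator $P_je^{\iu t|\nabla|}$ with $j$ and $t$ held fixed, the two endpoint bounds combine to
\[\|P_je^{\iu t|\nabla|}f\|_{L^q} \lesssim 1^{1-\theta}\big(2^{3j}(1+2^j|t|)^{-1}\big)^{\theta}\|f\|_{L^{q'}} = 2^{3j(1-\frac2q)}(1+2^j|t|)^{-(1-\frac2q)}\|f\|_{L^{q'}},\]
which is the assertion. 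None of these steps is a genuine obstacle; the only points requiring a little care are the bookkeeping of the powers of $2^j$ in the scaling identity for $K_t^j$ (three from the Jacobian of $\xi=2^j\eta$, plus one additional factor of $2^j$ inside the time variable coming from $e^{\iu t|\xi|}$), and the observation that (\ref{kernelbound}) applies verbatim to the rescaled kernel $K_t^0$.
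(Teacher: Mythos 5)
Your proposal is correct and follows essentially the same route as the paper: the $L^1\to L^\infty$ endpoint via Young's inequality, the scaling identity for the kernel reducing to the bound (\ref{kernelbound}), and interpolation with the trivial $L^2$ bound from Plancherel. The bookkeeping of the exponents ($\theta = 1-\tfrac2q$, the factor $2^{3j}$ from the Jacobian, and the rescaled time $2^jt$) is all accurate.
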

	\begin{proof}
		Let first $f \in L^1$. Young's convolution inequality yields
		\[ \|P_j e^{\iu t |\nabla|} f\|_{L^\infty} \lesssim \|\mathcal F^{-1} (e^{\iu t |\xi|}\psi_j)\|_{L^\infty} \|f\|_{L^1},\]
		where we have
		\begin{align*}
		\mathcal F^{-1} (e^{\iu t |\xi|}\psi_j) (x) &= (2\pi)^{-3/2}\int_{\R^3} e^{\iu x \cdot \xi} e^{\iu t |\xi|}\psi(2^{-j}\xi) \dd \xi\\ &= 2^{3j} (2\pi)^{-3/2}\int_{\R^3} e^{\iu 2^jx \cdot \eta} e^{\iu 2^jt |\eta|}\psi(\eta) \dd \eta  \\
		&= 2^{3j} \mathcal F^{-1} (e^{\iu 2^jt |\xi|}\psi) (2^jx).
		\end{align*}
		Estimate \eqref{kernelbound} now gives 
		\[\|P_j e^{\iu t |\nabla|} f\|_{L^\infty} \lesssim 2^{3j}(1+2^j|t|)^{-1} \|f\|_{L^1}.\]
		The assertion follows by interpolation with the $L^2$-bound
		$ \|P_j e^{\iu t |\nabla|} f\|_{L^2} \le \|f\|_{L^2}$, which is a consequence of Plancherel's theorem.
	\end{proof}
	
	We now state some of the Strichartz estimates for the wave equation to provide a background for our results. The next theorem follows from Corollary IV.1.2 in \cite{Sogge} combined with formula \eqref{WaveGroup} below. We call a triple $(p,q,\gamma)$ \emph{wave admissible} (in dimension three) if $p \in (2,\infty]$, $q \in [2,\infty)$, and
	\begin{equation}\label{DefAdmissible}
	\frac{1}{p}+\frac{1}{q} \le \frac{1}{2}, \qquad \frac{1}{p}+\frac{3}{q} = \frac{3}{2}-\gamma. 
	\end{equation}
	One then has $\gamma \in [0,\frac32)$, and the equality in \eqref{DefAdmissible} is called \emph{scaling condition}.
	
	\begin{Theorem}\label{ThmStrich}
		Let $(p,q,\gamma)$ be wave admissible for dimension three.
		Then we have the estimates
		\begin{align*}
		\|e^{\pm \iu t |\nabla|}f\|_{L^p_\R L^q} &\lesssim_{p,q} \|f\|_{\dot H^\gamma}, \\
		\Big\|\int_{-\infty}^t e^{\pm \iu (t-s) |\nabla|}F(s) \dd s\Big\|_{L^p_\R L^q} &\lesssim_{p,q} \|F\|_{L^1_\R \dot H^\gamma} 
		\end{align*}
		for all $f \in \dot H^\gamma$ and $F \in L^1_\R\dot H^\gamma$.
	\end{Theorem}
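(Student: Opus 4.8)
The plan is to prove both estimates by the classical $TT^*$ method built on the frequency-localized dispersive bound of Lemma~\ref{LemDisp}, along the lines of Chapter~IV of \cite{Sogge}. Replacing $f$ by $\bar f$ (resp.\ $F$ by $\bar F$) makes the two choices of sign equivalent, so I treat $e^{\iu t|\nabla|}$ only. Since the statement involves just the half-wave group, formula~(\ref{WaveGroup}) is not needed here; it enters only when one wants the analogous bounds for the wave equation itself, via $\cos(t|\nabla|)$ and $|\nabla|^{-1}\sin(t|\nabla|)$ being linear combinations of $e^{\pm\iu t|\nabla|}$.

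First I would reduce the homogeneous estimate to a single Littlewood--Paley block. Since $q<\infty$, the Littlewood--Paley square function estimate $\|g\|_{L^q}\lesssim\big\|\big(\sum_j|P_jg|^2\big)^{1/2}\big\|_{L^q}$ holds, and since $p,q\ge 2$ Minkowski's inequality gives
\[
\big\|e^{\iu t|\nabla|}f\big\|_{L^p_\R L^q}\lesssim\Big\|\Big(\sum_{j\in\Z}\big|e^{\iu t|\nabla|}P_j f\big|^2\Big)^{1/2}\Big\|_{L^p_\R L^q}\le\Big(\sum_{j\in\Z}\big\|e^{\iu t|\nabla|}P_j f\big\|_{L^p_\R L^q}^2\Big)^{1/2},
\]
where $P_j$ commutes with $e^{\iu t|\nabla|}$ as both are Fourier multipliers. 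Together with $\|f\|_{\dot H^\gamma}^2\sim\sum_{j\in\Z}2^{2\gamma j}\|P_j f\|_{L^2}^2$, the claim reduces to the frequency-localized bound $\|e^{\iu t|\nabla|}P_j f\|_{L^p_\R L^q}\lesssim 2^{\gamma j}\|P_j f\|_{L^2}$, uniformly in $j\in\Z$.

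For this localized bound I set $T_j g=P_j e^{\iu t|\nabla|}g$ and estimate $\|T_j\|_{L^2\to L^p_\R L^q}^2=\|T_jT_j^*\|_{L^{p'}_\R L^{q'}\to L^p_\R L^q}$, where $T_jT_j^*F=\int_\R P_j^2 e^{\iu(t-s)|\nabla|}F(s)\dd s$. Writing $P_j^2 e^{\iu(t-s)|\nabla|}F(s)=P_j e^{\iu(t-s)|\nabla|}(P_j F(s))$, Lemma~\ref{LemDisp} and the $L^{q'}$-boundedness of $P_j$ give the kernel bound $\|P_j^2 e^{\iu(t-s)|\nabla|}F(s)\|_{L^q}\lesssim 2^{3j\beta}(1+2^j|t-s|)^{-\beta}\|F(s)\|_{L^{q'}}$ with $\beta=1-\tfrac2q\in[0,1)$. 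Minkowski's integral inequality in $x$ then reduces the matter to the scalar convolution bound $\|(1+2^j|\cdot|)^{-\beta}*g\|_{L^p_\R}\lesssim 2^{-2j/p}\|g\|_{L^{p'}_\R}$. This follows from Young's inequality whenever $(1+|\cdot|)^{-\beta}\in L^{p/2}(\R)$, which fails exactly on the scaling line $\tfrac1p+\tfrac1q=\tfrac12$ with $p<\infty$; in that remaining case one bounds $(1+2^j|t|)^{-\beta}\le(2^j|t|)^{-\beta}$ and applies the one-dimensional Hardy--Littlewood--Sobolev inequality, whose exponents match precisely because $p>2$ (equivalently $\beta=\tfrac2p<1$). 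Collecting powers of $2^j$ yields $\|T_jT_j^*\|\lesssim 2^{(3\beta-2/p)j}$, hence $\|T_j\|\lesssim 2^{(\tfrac32\beta-\tfrac1p)j}$, and $\tfrac32\beta-\tfrac1p=\tfrac32-\tfrac3q-\tfrac1p=\gamma$ by the scaling condition in (\ref{DefAdmissible}).

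Finally, the inhomogeneous estimate follows from the homogeneous one directly, without a Christ--Kiselev argument, thanks to the $L^1$ norm in time on the right. Writing $\int_{-\infty}^t e^{\iu(t-s)|\nabla|}F(s)\dd s=\int_\R\mathbf 1_{\{s<t\}}\,e^{\iu(t-s)|\nabla|}F(s)\dd s$, Minkowski's integral inequality in $s$ together with the translation-invariant homogeneous bound gives
\[
\Big\|\int_{-\infty}^t e^{\iu(t-s)|\nabla|}F(s)\dd s\Big\|_{L^p_\R L^q}\le\int_\R\big\|e^{\iu(\cdot-s)|\nabla|}F(s)\big\|_{L^p_\R L^q}\dd s\lesssim\int_\R\|F(s)\|_{\dot H^\gamma}\dd s=\|F\|_{L^1_\R\dot H^\gamma}.
\]
The one genuinely delicate point is the localized $TT^*$ step, specifically the scalar convolution estimate on the scaling line $\tfrac1p+\tfrac1q=\tfrac12$: this is exactly where the exclusion $p\neq 2$ is used, and (through the square function step) where $q\neq\infty$ is needed, reflecting the forbidden endpoint $(p,q)=(2,\infty)$.
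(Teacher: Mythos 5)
Your proof is correct and is essentially the standard $TT^*$ argument that the paper invokes by citing Corollary IV.1.2 of \cite{Sogge}; it also runs in exact parallel to the paper's own proof of the discrete analogue (Lemma \ref{LemLPEst} and Theorem \ref{ThmDisStrich}), with the one-dimensional Hardy--Littlewood--Sobolev inequality on the scaling line playing the role of its discrete counterpart. All the delicate points (exclusion of $p=2$ and $q=\infty$, the bookkeeping of the powers of $2^j$, and the reduction of the inhomogeneous estimate to the homogeneous one via Minkowski) are handled correctly.
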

	
	Observe that the triple $(\infty,2,0)$ corresponds to the usual energy estimate. In the above inequalities one increases the space integrability paying a price in regularity and time integrability. For the last inhomogeneous estimate, there are also variants involving $L^{\tilde p'}L^{\tilde q'}$-norms instead of the $L^1L^2$-norm. Moreover, for triples $(p,\infty,\gamma)$ with $p>2$ that satisfy \eqref{DefAdmissible}, the above estimates remain true if one replaces $L^\infty$ by $\dot B^0_{2,\infty}$. Since we will not need these facts, we omit them for simplicity. 
	
	\subsection{Time-discrete estimates}
	
	Now we turn our attention to the time-discrete setting. In the following $\ell^pL^q$ estimates, the $\ell^p$-summation is always taken over the variable $n$. We start with frequency-localized inequalities.
	
	\begin{Lemma}\label{LemLPEst}
		Let $(p,q,\gamma)$ be wave admissible. Then the estimates
		\begin{align}\label{LPEst1}
		\Big\|\sum_{k\in\Z}P_je^{\iu (n-k)|\nabla|} F_k \Big\|_{\ell^pL^q} &\lesssim_{p,q} 2^{2j\gamma}(2^{\frac{2j}{p}}+1) \|F\|_{\ell^{p'}L^{q'}}, \\\label{LPEst2}
		\Big\|\sum_{k\in\Z}P_je^{-\iu k|\nabla|} F_k \Big\|_{L^2} &\lesssim_{p,q} 2^{j\gamma}(2^{\frac{j}{p}}+1) \|F\|_{\ell^{p'}L^{q'}}, \\\label{LPEst3}
		\|P_je^{\iu n|\nabla|} f \|_{\ell^pL^q} &\lesssim_{p,q} 2^{j\gamma}(2^{\frac{j}{p}}+1) \|P_jf\|_{L^2}
		\end{align}
		hold for all $F \in \ell^{p'}L^{q'}$, $f \in L^2$, and $j \in \Z$.
	\end{Lemma}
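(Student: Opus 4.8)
The plan is to deduce all three inequalities from (\ref{LPEst1}) by the standard $TT^*$ argument, carried out with sums in place of integrals. Since $P_j$ is self-adjoint and commutes with $e^{\iu t|\nabla|}$, the operator $T_j\colon L^2\to\ell^pL^q$, $T_jf=(P_je^{\iu n|\nabla|}f)_n$, has adjoint $T_j^*G=\sum_{n}P_je^{-\iu n|\nabla|}G_n$, and $T_jT_j^*G=\big(\sum_{k}P_j^2e^{\iu (n-k)|\nabla|}G_k\big)_n$. Applying (\ref{LPEst1}) with $P_jG_k$ in place of $F_k$, and using that the $P_j$ are uniformly bounded on $L^{q'}$, bounds $\|T_jT_j^*\|_{\ell^{p'}L^{q'}\to\ell^pL^q}$ by a constant times $2^{2j\gamma}(2^{2j/p}+1)$. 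The identities $\|T_j\|^2=\|T_j^*\|^2=\|T_jT_j^*\|$ together with the elementary equivalence $(2^{2j/p}+1)^{1/2}\asymp 2^{j/p}+1$ then yield (\ref{LPEst2}) directly, and (\ref{LPEst3}) after writing $P_je^{\iu n|\nabla|}f=\tilde P_je^{\iu n|\nabla|}(P_jf)$ with $\tilde P_j$ a frequency cut-off equal to $1$ on $\supp\psi_j$ and applying the same $TT^*$ identity to $h\mapsto(\tilde P_je^{\iu n|\nabla|}h)_n$ (this is what produces $\|P_jf\|_{L^2}$, rather than $\|f\|_{L^2}$, on the right). Everything thus rests on (\ref{LPEst1}); note that its proof below goes through verbatim if $P_j$ is replaced by the Fourier multiplier with symbol $\phi(2^{-j}\cdot)$ for any fixed $\phi\in C_c^\infty(\R^3\setminus\{0\})$, which is all the reductions above require, the dispersive bound of Lemma \ref{LemDisp} holding for any such $\phi$ by the same rescaling argument.

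To prove (\ref{LPEst1}), after restricting to the dense class of Schwartz functions with compact Fourier support away from the origin to justify the following manipulations, the triangle inequality in $L^q$, monotonicity of the $\ell^p$-norm, and the frequency-localized dispersive estimate of Lemma \ref{LemDisp} reduce it to the scalar inequality
\[
\Big\|\sum_{k\in\Z}K_j(n-k)\,a_k\Big\|_{\ell^p_n}\lesssim_{p,q}2^{2j\gamma}\big(2^{2j/p}+1\big)\,\|a\|_{\ell^{p'}},\qquad K_j(m)\coloneqq 2^{3j(1-\frac2q)}\big(1+2^j|m|\big)^{-(1-\frac2q)},
\]
for nonnegative sequences $a=(\|F_k\|_{L^{q'}})_{k\in\Z}$, where the exponents satisfy the scaling identity $3\big(1-\tfrac2q\big)-\tfrac2p=2\gamma$ from (\ref{DefAdmissible}).

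For this convolution inequality the case $p=\infty$ is immediate, since $K_j(m)\le 2^{3j(1-2/q)}$ and hence $\sum_kK_j(n-k)a_k\le 2^{3j(1-2/q)}\|a\|_{\ell^1}=2^{2j\gamma}\|a\|_{\ell^1}$. For $p<\infty$ I distinguish two regimes. If $(p,q,\gamma)$ lies strictly below the sharp line, $\tfrac1p+\tfrac1q<\tfrac12$, then $\big(1-\tfrac2q\big)\tfrac p2>1$, so $K_j\in\ell^{p/2}$; splitting the sum defining $\|K_j\|_{\ell^{p/2}}$ at $|m|\sim 2^{-j}$ gives $\|K_j\|_{\ell^{p/2}}\asymp 2^{3j(1-\frac2q)}\big(1+2^{-2j/p}\big)=2^{2j\gamma}\big(2^{2j/p}+1\big)$, and discrete Young's inequality $\ell^{p/2}*\ell^{p'}\hookrightarrow\ell^p$ concludes. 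On the sharp line $\tfrac1p+\tfrac1q=\tfrac12$ (so $p>2$) one has $1-\tfrac2q=\tfrac2p\in(0,1)$ and $K_j$ just fails to lie in $\ell^{p/2}$, so Young is unavailable; instead one bounds $K_j(0)=2^{6j/p}$ and $K_j(m)\le 2^{4j/p}|m|^{-2/p}$ for $m\neq 0$, estimates the off-diagonal part by the discrete Hardy--Littlewood--Sobolev inequality for the exponent $2/p\in(0,1)$ — which maps $\ell^{p'}$ to $\ell^p$ precisely because $p>2$ — and absorbs the diagonal term via $\ell^{p'}\hookrightarrow\ell^p$; since $2\gamma=\tfrac4p$ here, the total is again $\lesssim 2^{2j\gamma}\big(2^{2j/p}+1\big)\|a\|_{\ell^{p'}}$. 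The one genuinely delicate point is this sharp line: Young breaks down there, and the forbidden pair $(p,q)=(2,\infty)$ — where even the discrete Hardy--Littlewood--Sobolev step degenerates into a logarithmic divergence, and which is excluded from the notion of wave admissibility — is exactly the limit of what this argument reaches; that case is treated separately, with a logarithmic loss, in Theorem \ref{ThmEndp}.
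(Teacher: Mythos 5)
Your proposal is correct and follows essentially the same route as the paper: reduce (\ref{LPEst1}) to a scalar convolution bound via the frequency-localized dispersive estimate of Lemma \ref{LemDisp}, then obtain (\ref{LPEst2}) and (\ref{LPEst3}) by a $TT^*$ argument together with a fattened Littlewood--Paley projection. The only (harmless) deviation is in the convolution step for $p<\infty$: the paper first lowers the kernel exponent from $1-\frac2q$ to $\frac2p$ using $\frac1p+\frac1q\le\frac12$ and then applies the discrete Hardy--Littlewood--Sobolev inequality uniformly after splitting off the diagonal term (which produces the factor $2^{2j/p}$), whereas you use discrete Young's inequality strictly below the sharp line and reserve Hardy--Littlewood--Sobolev for the sharp line; both yield the stated bound.
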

	\begin{proof}
		We first deduce from Lemma \ref{LemDisp} the estimate
		\begin{align*}
		\Big\|\sum_{k\in\Z}P_je^{\iu (n-k)|\nabla|} F_k \Big\|_{\ell^pL^q}  &\le \Big\|\sum_{k\in\Z}\|P_je^{\iu (n-k)|\nabla|} F_k\|_{L^q} \Big\|_{\ell^p} \\ &\lesssim  2^{3j(1-\frac{2}{q})}
		\Big\|\sum_{k\in\Z} \frac{\|F_k\|_{L^{q'}}}{(1+2^j|n-k|)^{1-\frac{2}{q}}} \Big\|_{\ell^p}\\ &\le  2^{2j(\frac{1}{p}+\gamma)}
		\Big\|\sum_{k\in\Z} \frac{\|F_k\|_{L^{q'}}}{(1+2^j|n-k|)^{\frac{2}{p}}} \Big\|_{\ell^p} ,
		\end{align*}
		where the last inequality follows from the admissibility conditions \eqref{DefAdmissible}. The first assertion for $p=\infty$ is now clear. For $p < \infty$ we compute
		\begin{align*}
		2&^{2j(\frac{1}{p}+\gamma)}
		\Big\|\sum_{k\in\Z} \frac{\|F_k\|_{L^{q'}}}{(1+2^j|n-k|)^{\frac{2}{p}}} \Big\|_{\ell^p}\\
		&\le 2^{2j(\frac{1}{p}+\gamma)}\Big(\Big\| \|F_n\|_{L^{q'}}\Big\|_{\ell^p} + \Big\|\sum_{\substack{k\in\Z \\ k \neq n}} \frac{\|F_k\|_{L^{q'}}}{(1+2^j|n-k|)^{\frac{2}{p}}} \Big\|_{\ell^p}\Big) \\ &\le 2^{2j\gamma} \Big(2^{\frac{2j}{p}} \|F\|_{\ell^{p'}L^{q'}} + \Big\|\sum_{\substack{k\in\Z \\ k \neq n}} \frac{\|F_k\|_{L^{q'}}}{|n-k|^{\frac{2}{p}}} \Big\|_{\ell^p}\Big) \\ &\lesssim_{p,q} 2^{2j\gamma}(2^{\frac{2j}{p}}+1) \|F\|_{\ell^{p'}L^{q'}}
		\end{align*}
		with the help of the discrete Hardy--Littlewood--Sobolev inequality (see Proposition (a) in \cite{SteinWainger}). We note that in the case $n=k$ the factor $2^{2j/p}$ does not cancel. This is the main difference to the continuous case, where such a term does not appear in the continuous Hardy--Littlewood--Sobolev inequality. This proves \eqref{LPEst1}. 
		
		The other two claims follow by a standard $TT^*$ argument that we sketch. Using that $P_je^{-\iu k |\nabla|} = e^{-\iu k |\nabla|}P_j$, from \eqref{LPEst1} we derive
		\begin{align*}
		\Big\|\sum_{k\in\Z}P_je^{-\iu k|\nabla|} F_k \Big\|_{L^2}^2 &= \sum_{n \in \Z} \Big\langle  \sum_{k\in\Z}P_je^{\iu (n-k)|\nabla|} F_k, P_j F_n \Big\rangle \\
		&\lesssim_{p,q} 2^{2j\gamma}(2^{\frac{2j}{p}}+1) \|F\|_{\ell^{p'}L^{q'}}^2, 
		\end{align*}
		implying \eqref{LPEst2}. Here we write $\langle\cdot,\cdot\rangle$ for the $L^2$-inner product. By duality, it follows the estimate
		\begin{equation}\label{LPEst4}
		\|P_je^{\iu n|\nabla|} f \|_{\ell^pL^q} \lesssim_{p,q} 2^{j\gamma}(2^{\frac{j}{p}}+1) \|f\|_{L^2}.
		\end{equation}
		To recover $P_j$ on the right-hand side, we use the fattened Littlewood--Paley projection $\widetilde P_j\coloneqq P_{j-1}+P_j+P_{j+1}$ for $j \in \Z$, noting that $\widetilde P_j P_j = P_j$. Clearly, \eqref{LPEst4} also holds with $\widetilde P_j$ instead of $P_j$. In this inequality, we then replace $f$ by $P_jf$ to obtain the last assertion \eqref{LPEst3}.
	\end{proof}
	
	To deal with the additional factor $2^{\frac{j}{p}}+1$, we include a frequency cut-off in the discrete Strichartz estimates. For each $K \ge 1$ we define the Fourier multiplier
	\begin{equation}\label{DefPi}
	\pi_K \coloneqq \mathcal F^{-1} \mathbbm{1}_{B(0,K)}\mathcal F. 
	\end{equation}
	By Plancherel's theorem, the operators $\pi_K$ are clearly uniformly bounded in $K$ on every $L^2$-based Sobolev space. 
	
	We can now show the desired discrete Strichartz estimates. We stress that these estimates fail without the cut-off if $p<\infty$. For instance, take a function $f \in \dot H^\gamma \setminus L^q$ in \eqref{DisStrich1}. On the other hand, for $f \in \dot H^\gamma$ the map $\pi_Kf$ belongs to all $L^r$ with $r \ge q_0$ and $3/2 - \gamma = 3/q_0$ by Sobolev's embedding and Bernstein's inequality. 
	
	\begin{Theorem}\label{ThmDisStrich}
		Let $(p,q,\gamma)$ be wave admissible. Then we have the estimates
		\begin{align}\label{DisStrich1}
		\|\pi_Ke^{\iu n\tau|\nabla|} f \|_{\ell^p_\tau L^q} &\lesssim_{p,q} (K\tau)^\frac{1}{p} \|f\|_{\dot H^\gamma},\\ \label{DisStrich2}
		\Big\|\tau\sum_{k=-\infty}^{n-1}\pi_Ke^{\iu (n-k)\tau|\nabla|} F_k \Big\|_{\ell^p_\tau L^q} &\lesssim_{p,q} (K\tau)^{\frac{1}{p}}\|F\|_{\ell_\tau^{1}\dot H^{\gamma}} 
		\end{align}
		for all $\tau \in (0,1]$, $K \ge \tau^{-1}$, $f \in \dot H^\gamma$, and $F \in \ell^1 \dot H^\gamma$.
	\end{Theorem}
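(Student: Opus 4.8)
The plan is to prove the homogeneous bound (\ref{DisStrich1}) first, and then derive the inhomogeneous bound (\ref{DisStrich2}) from it via Minkowski's inequality. For (\ref{DisStrich1}) I would begin by eliminating the step size through a spatial dilation. Writing $D_\lambda h(x)=h(\lambda x)$, one has $e^{\iu t|\nabla|}D_\lambda=D_\lambda e^{\iu\lambda t|\nabla|}$ and $\pi_KD_{1/\tau}=D_{1/\tau}\pi_{K\tau}$, as well as $\|D_{1/\tau}h\|_{L^q}=\tau^{3/q}\|h\|_{L^q}$ and $\|D_{1/\tau}g\|_{\dot H^\gamma}=\tau^{3/2-\gamma}\|g\|_{\dot H^\gamma}$. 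Inserting $f=D_{1/\tau}g$ into (\ref{DisStrich1}) and using the scaling condition $\frac1p+\frac3q=\frac32-\gamma$ of (\ref{DefAdmissible}), the estimate (\ref{DisStrich1}) becomes equivalent to the $\tau$-independent inequality
\[\|\pi_\kappa e^{\iu n|\nabla|}g\|_{\ell^pL^q}\lesssim_{p,q}\kappa^{\frac1p}\|g\|_{\dot H^\gamma},\qquad\kappa\coloneqq K\tau\ge1,\]
where now $\ell^p=\ell^p(\Z)$ is the unweighted sequence space, and the hypothesis $K\ge\tau^{-1}$ has become $\kappa\ge1$. By density (Proposition 1.35 of \cite{Bahouri}) it suffices to treat Schwartz functions $g$ with Fourier support in a compact subset of $\R^3\setminus\{0\}$, so that all sums below are finite.

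To prove this $\tau$-free estimate, decompose $g=\sum_{j}P_jg$. Since $p\in(2,\infty]$ and $q\in[2,\infty)$, the Littlewood--Paley square-function inequality in $L^q$, combined with the triangle inequality in $L^{q/2}$ and in $\ell^{p/2}$, yields
\[\|\pi_\kappa e^{\iu n|\nabla|}g\|_{\ell^pL^q}\lesssim\Big(\sum_{j\in\Z}\|\pi_\kappa P_je^{\iu n|\nabla|}g\|_{\ell^pL^q}^2\Big)^{\frac12}.\]
Here the cut-off does its work: $\pi_\kappa P_j=0$ whenever $2^{j-1}>\kappa$, so only indices $j$ with $2^j\le2\kappa$ contribute. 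For those, we commute $\pi_\kappa$ past $P_j$ and $e^{\iu n|\nabla|}$ (it is an orthogonal projection), apply estimate (\ref{LPEst3}) of Lemma \ref{LemLPEst} to get $\|\pi_\kappa P_je^{\iu n|\nabla|}g\|_{\ell^pL^q}\lesssim2^{j\gamma}(2^{j/p}+1)\|P_jg\|_{L^2}$, and observe that $2^j\le2\kappa$ together with $\kappa\ge1$ forces $2^{j/p}+1\lesssim\kappa^{1/p}$ \emph{uniformly in} $j$. Substituting this and using the Littlewood--Paley characterization $\sum_j2^{2j\gamma}\|P_jg\|_{L^2}^2\approx\|g\|_{\dot H^\gamma}^2$, the right-hand side is $\lesssim\kappa^{1/p}\|g\|_{\dot H^\gamma}$, which proves the claim and hence (\ref{DisStrich1}).

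For (\ref{DisStrich2}), Minkowski's inequality in the $k$-summation gives, after dropping the restriction $k<n$ and shifting the summation index,
\[\Big\|\tau\sum_{k=-\infty}^{n-1}\pi_Ke^{\iu(n-k)\tau|\nabla|}F_k\Big\|_{\ell^p_\tau L^q}\le\tau\sum_{k\in\Z}\|\pi_Ke^{\iu n\tau|\nabla|}F_k\|_{\ell^p_\tau L^q},\]
and applying (\ref{DisStrich1}) to each summand bounds this by $(K\tau)^{1/p}\,\tau\sum_{k}\|F_k\|_{\dot H^\gamma}=(K\tau)^{1/p}\|F\|_{\ell^1_\tau\dot H^\gamma}$, as desired.

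The only genuinely delicate step is the combination of the frequency cut-off with Lemma \ref{LemLPEst}: the factor $2^{j/p}+1$ appearing there — which has no counterpart in the continuous theory and would make the sum over dyadic scales $j$ diverge — is exactly what the truncation to $2^j\lesssim K\tau$ controls, and this is the only place where the assumption $K\ge\tau^{-1}$ enters. Everything else is standard Littlewood--Paley calculus together with Minkowski's inequality.
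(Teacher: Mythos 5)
Your proof is correct. The homogeneous estimate (\ref{DisStrich1}) is handled exactly as in the paper: the same dilation identity reduces to a $\tau$-free statement, and the same combination of the Littlewood--Paley square function, Minkowski in $\ell^2_j$, estimate (\ref{LPEst3}), and the observation that $\pi_\kappa P_j$ vanishes for $2^j\gtrsim\kappa$ (so that $2^{j/p}+1\lesssim\kappa^{1/p}$ on the surviving scales) produces the factor $\kappa^{1/p}$. Where you genuinely diverge is the inhomogeneous estimate (\ref{DisStrich2}): you exploit that the right-hand side carries an $\ell^1_\tau\dot H^\gamma$-norm, so the triangle inequality in $k$ plus translation invariance of $\ell^p_\tau(\Z)$ reduces it directly to (\ref{DisStrich1}); the paper instead dualizes (\ref{DisStrich1}) to the adjoint bound (\ref{DualHom}) and runs a duality argument on the retarded sum (mentioning Christ--Kiselev as an alternative). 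Both are valid; your route is shorter and avoids duality altogether, which works precisely because of the $\ell^1$ time-exponent and would not extend to right-hand sides of the form $\ell^{\tilde p'}_\tau L^{\tilde q'}$. The paper's detour has the side benefit of establishing (\ref{DualHom}), which is reused later (e.g.\ in the proof of Corollary \ref{KorHybStrich}), but that is not needed for the theorem itself.
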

	
	\begin{proof}
		By approximation, it is enough to take Schwartz functions and finitely supported sequences. 
		A scaling argument reduces the estimates to the case $\tau=1$. Indeed, we can write
		\begin{equation}\label{scaling}
		\pi_Ke^{\iu t |\nabla|} f = D_{\tau^{-1}} \pi_{K\tau}e^{\iu \frac{t}{\tau} |\nabla|} D_\tau f,
		\end{equation}
		where the spatial dilation operator $D_a$ is given by $(D_a f)(x) \coloneqq f(ax)$. Assuming the case $\tau = 1$ is shown, we get the general case
		\begin{align*}
		\|\pi_Ke^{\iu n \tau |\nabla|}f\|_{\ell^p_\tau L^q} &= \tau^{\frac{1}{p}} \|D_{\tau^{-1}} \pi_{K\tau}e^{\iu n |\nabla|} D_\tau f\|_{\ell^pL^q} =\tau^{\frac{1}{p}+\frac{3}{q}} \| \pi_{K\tau}e^{\iu n |\nabla|} D_\tau f\|_{\ell^pL^q}\\ &\lesssim_{p,q} \tau^{\frac{1}{p}+\frac{3}{q}}(K\tau)^{\frac{1}{p}} \|  D_\tau f\|_{\dot H^\gamma} = \tau^{\frac{1}{p}+\frac{3}{q}-\frac{3}{2}+\gamma}(K\tau)^{\frac{1}{p}} \| f\|_{\dot H^\gamma} \\
		&= (K\tau)^{\frac{1}{p}} \| f\|_{\dot H^\gamma}
		\end{align*}
		by the scaling condition in \eqref{DefAdmissible}, and similarly for the inhomogeneous estimate.
		
		So let $\tau=1$ and $K \ge 1$. By means of the Littlewood--Paley square function estimate, Minkowski's inequality, and Lemma \ref{LemLPEst}, we compute
		\begin{align*}
		\|&\pi_Ke^{\iu n |\nabla|}f\|_{\ell^pL^q} \\
		&\lesssim_q \Big\|\Big(\sum_{j\in\Z}|P_j\pi_Ke^{\iu n |\nabla|}f|^2\Big)^\frac{1}{2}\Big\|_{\ell^pL^q} \le \Big(\sum_{j\in\Z}\|P_je^{\iu n |\nabla|}\pi_Kf\|^2_{\ell^pL^q}\Big)^\frac{1}{2} \\ &\lesssim_{p,q}
		\Big(\sum_{j\in\Z}\|2^{j\gamma}(2^{\frac{j}{p}}+1) P_j\pi_Kf\|^2_{L^2}\Big)^\frac{1}{2} \lesssim K^{\frac{1}{p}}\Big(\sum_{j\in\Z}\|2^{j\gamma} P_jf\|^2_{L^2}\Big)^\frac{1}{2}\\
		&\lesssim K^{\frac{1}{p}}\|f\|_{\dot H^\gamma},
		\end{align*}
		also using that $ P_j \pi_K=0$ for $K \lesssim 2^j$. Thus, the homogeneous estimate \eqref{DisStrich1} is true. By duality, we infer the dual homogeneous estimate
		\begin{equation}\label{DualHom}
		\Big\|\sum_{k \in \Z} \pi_K e^{-ik|\nabla|}G_k\Big\|_{\dot H^{-\gamma}} \lesssim_{p,q} K^{\frac{1}{p}} \|G\|_{\ell^{p'}L^{q'}}
		\end{equation}
		that is valid for all $G \in \ell^{p'}L^{q'}$.
		
		The truncated inhomogeneous estimate \eqref{DisStrich2} is proven by another duality argument via
		\begin{align*}
		\Big\|&\sum_{k=-\infty}^{n-1}\pi_Ke^{\iu (n-k)|\nabla|} F_k \Big\|_{\ell^p L^q} \\
		&= \sup_{\|G\|_{\ell^{p'}L^{q'}}\le 1}\Big| \sum_{n \in \Z} \Big\langle \sum_{k=-\infty}^{n-1}\pi_Ke^{\iu (n-k)|\nabla|} F_k,G_n \Big\rangle \Big| \\
		&= \sup_{\|G\|_{\ell^{p'}L^{q'}}\le 1}\Big| \sum_{k \in \Z} \Big\langle  F_k,\sum_{n=k+1}^{\infty}\pi_Ke^{\iu (k-n)|\nabla|}G_n \Big\rangle \Big| \\
		&\le \|F\|_{\ell^1\dot H^\gamma} \sup_{\|G\|_{\ell^{p'}L^{q'}}\le 1} \sup_{k \in \Z}\Big\|\sum_{n=k+1}^{\infty}\pi_Ke^{\iu (k-n)|\nabla|}G_n\Big\|_{\dot H^{-\gamma}}.
		\end{align*}
		Using \eqref{DualHom}, the assertion now follows from
		\begin{align*}
		\sup_{\|G\|_{\ell^{p'}L^{q'}}\le 1} &\sup_{k \in \Z}\Big\|\sum_{n=k+1}^{\infty}\pi_Ke^{\iu (k-n)|\nabla|}G_n\Big\|_{\dot H^{-\gamma}} \\
		&= \sup_{\|G\|_{\ell^{p'}L^{q'}}\le 1} \sup_{k \in \Z}\Big\|\sum_{n\in\Z}\pi_Ke^{-\iu n|\nabla|}\mathbbm{1}_{\{n \ge k+1\}}G_n\Big\|_{\dot H^{-\gamma}}\\
		&\lesssim_{p,q} K^{\frac{1}{p}}\sup_{\|G\|_{\ell^{p'}L^{q'}}\le 1} \sup_{k \in \Z}\|\mathbbm{1}_{\{n \ge k+1\}}G_n\|_{\ell^{p'}L^{q'}} = K^{\frac{1}{p}}.
		\end{align*}
		Alternatively, one could also employ the Christ--Kiselev Lemma \ref{LemChristKiselev} below to deduce the inhomogeneous estimate from the homogeneous one.
	\end{proof}
	
	We discuss some variants of the above results and proofs.
	
	\begin{Remark}\label{RemDisStr}
		a) Using the Bernstein inequality $\|\pi_Kf\|_{\dot H^\gamma} \lesssim K^\gamma \|f\|_{L^2}$ that holds for every $L^2$-function $f$ and $\gamma \ge 0$, we can convert the derivative loss from the discrete Strichartz estimates into a multiplicative factor of the form $K^\gamma$. This gives e.g.
		\begin{align*}
		\|\pi_Ke^{\iu n\tau|\nabla|} f \|_{\ell^p_\tau L^q} &\lesssim_{p,q} (K\tau)^\frac{1}{p}K^\gamma \|f\|_{L^2},
		\end{align*}
		and similarly for the inhomogeneous estimates. \smallskip
		
		b) In all the previous estimates we can replace the plus sign in the exponential by a minus sign (e.g., $e^{-\iu n\tau|\nabla|}$ instead of $e^{\iu n\tau|\nabla|}$) since we can instead of $f$ resp.\ $F$ always use their complex conjugates and $\mathbbm1_{B(0,K)}(\xi) = \mathbbm1_{B(0,K)}(-\xi)$. This modification is employed below without further notice.\smallskip
		
		c) In our applications, we will always deal with finite sequences, defined on some set $\{0,\dots,N\}$ for an integer $N\in \N_0$. In that case, the second estimate of Theorem \ref{ThmDisStrich} takes the form
		\[ 		\Big\|\tau\sum_{k=0}^{n-1}\pi_Ke^{\iu (n-k)\tau|\nabla|} F_k \Big\|_{\ell^p_{\tau,N} L^q} \lesssim_{p,q} (K\tau)^{\frac{1}{p}}\|F\|_{\ell_{\tau,N-1}^{1}\dot H^{\gamma}}, \]
		where on the right-hand side we only need to consider the index range $\{0,\dots,N-1\}$.
	\end{Remark}
	
	\begin{Remark}
		There exists an alternative (simpler) approach to time-discrete Strichartz estimates, which uses the well-known continuous estimates just as a ``black box''. In the context of Schrödinger equations, it was used in Lemma 2.6 of the recent preprint \cite{WuModified}, see also Lemma 2.1 of \cite{TaoBilin} for a similar technique. But this approach yields a weaker estimate compared to Theorem \ref{ThmDisStrich} in the case when $K>\tau^{-1}$. We give the details. For technical reasons, here we have to replace the frequency cut-off $\pi_K$ by a version $\widetilde \pi_K$ with a smooth cut-off function (similar as the Littlewood--Paley projections from Definition \ref{DefLP}). Let first $\tau=1$ and $K>0$ be arbitrary. For a function $f \in \dot H^\gamma$, we get
		\begin{align*}
		\|&e^{\iu n |\nabla|}\widetilde \pi_Kf\|^p_{\ell^pL^q} \\
		&= \sum_{n \in \Z} \int_{n-1}^n \|e^{\iu n |\nabla|}\widetilde \pi_Kf\|^p_{L^q} \dd t \\ 
		&\lesssim_p \sum_{n \in \Z} \int_{n-1}^n \|(e^{\iu n |\nabla|}-e^{\iu t |\nabla|})\widetilde \pi_Kf\|^p_{L^q} \dd t + \sum_{n \in \Z} \int_{n-1}^n\|e^{\iu t |\nabla|}\widetilde \pi_Kf\|^p_{L^q} \dd t.
		\end{align*}
		Note that the last term is equal to $\|e^{\iu t |\nabla|}\widetilde \pi_Kf\|_{L^pL^q}^p$, therefore it can be treated directly by the continuous Strichartz estimate from Theorem \ref{ThmStrich}. The first term can be rewritten as
		\begin{align*}
		\sum_{n \in \Z} \int_{n-1}^n \|(e^{\iu n |\nabla|}-e^{\iu t |\nabla|})\widetilde \pi_Kf\|^p_{L^q} \dd t &= \sum_{n \in \Z} \int_{n-1}^n \Big\|\int_t^n \iu |\nabla| e^{\iu \sigma |\nabla|}\widetilde \pi_Kf \dd \sigma\Big\|^p_{L^q} \dd t \\
		&\le \sum_{n \in \Z} \int_{n-1}^n \int_{n-1}^n  \||\nabla| e^{\iu \sigma |\nabla|}\widetilde \pi_Kf \|^p_{L^q}\dd \sigma \dd t \\
		&\lesssim_{p,q} K^p \sum_{n \in \Z}  \int_{n-1}^n  \| e^{\iu \sigma |\nabla|}f\|^p_{L^q} \dd \sigma, 
		\end{align*}
		where we used Bernstein's inequality and the frequency cut-off $\widetilde \pi_K$ to get rid of the differential operator. In this step it would be necessary to use $\widetilde \pi_K$ with a smooth cut-off function. Now we are again in a position to apply Theorem \ref{ThmStrich}. Altogether, this gives the estimate
		\[\|\widetilde \pi_Ke^{\iu n |\nabla|}f\|_{\ell^pL^q} \lesssim_{p,q} (1+K)\|f\|_{\dot H^\gamma}. \]
		A scaling argument as in the proof of Theorem \ref{ThmDisStrich} then yields the estimate 
		\[\|\widetilde \pi_Ke^{\iu n \tau |\nabla|}f\|_{\ell^p_\tau L^q} \lesssim_{p,q} (1+K\tau)\|f\|_{\dot H^\gamma}, \]
		for general $\tau>0$.
		We see that this estimate is inferior to Theorem \ref{ThmDisStrich} if $K>\tau^{-1}$, but for $K=\tau^{-1}$ they are the same.
	\end{Remark}
	
	\subsection{Endpoint estimates with logarithmic loss}
	
	The Strichartz estimates from Theorem \ref{ThmStrich} and \ref{ThmDisStrich} are in general wrong if $(p,q,\gamma)=(2,\infty,1)$, see \cite{TaoBilin} or Exercise 2.44 in \cite{Tao2006} for a discussion. In this section we show a local-in-time estimate with logarithmic loss, which will be used to discuss the corrected Lie splitting in Section \ref{SecKorr}. We follow the approach from Section 8 in \cite{Joly} and transfer it to the time-discrete setting. First, we need two lemmas with basic estimates. The first one is contained in the proof of Lemma 8.1 in \cite{Joly}.
	
	\begin{Lemma}\label{LemM}
		The function
		\[M \colon \R \times \R^3 \to \C, \quad M(\lambda,z) \coloneqq \int_{B(0,1)} |\xi|^{-2} \cos(\lambda|\xi|)e^{\iu z \cdot \xi} \dd \xi \]
		satisfies the decay estimate
		\[|M(\lambda,z)| \lesssim \frac{1}{1+|\lambda|} \]
		for all $\lambda \in \R$ and $z \in \R^3$.
	\end{Lemma}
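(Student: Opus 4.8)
\emph{Proof proposal.} The plan is to reduce the three‑dimensional integral to a one‑dimensional oscillatory integral and then extract the decay purely from the oscillation of $\cos(\lambda|\xi|)$, splitting into two regimes according to the size of $|z|$. Two elementary observations come first: since $\cos$ is even, $M(-\lambda,z)=M(\lambda,z)$, so it suffices to treat $\lambda\ge 0$; and since $\int_{B(0,1)}|\xi|^{-2}\dd\xi = 4\pi$ in $\R^3$, we have $|M(\lambda,z)|\le 4\pi$ for all $(\lambda,z)$, which already gives the claim for $\lambda\le 1$. So it remains to prove $|M(\lambda,z)|\lesssim \lambda^{-1}$ for $\lambda\ge 1$. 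For this I would pass to polar coordinates $\xi = r\omega$ with $r\in(0,1)$, $\omega\in S^2$, and carry out the angular integration using the explicit formula $\int_{S^2} e^{\iu \eta\cdot\omega}\dd\sigma(\omega) = 4\pi\,\frac{\sin|\eta|}{|\eta|}$; the factor $|\xi|^{-2}=r^{-2}$ cancels the Jacobian $r^2$, leaving
\[ M(\lambda,z) = 4\pi\int_0^1 \cos(\lambda r)\,\frac{\sin(r|z|)}{r|z|}\dd r, \]
with the usual convention that the fraction equals $1$ at $r|z|=0$.

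Next I would distinguish two cases. If $|z|\le\lambda/2$, I use $\frac{\sin(r|z|)}{r|z|} = \int_0^1 \cos(s r|z|)\dd s$, apply the product‑to‑sum identity to $\cos(\lambda r)\cos(s r|z|)$, and integrate in $r$ via $\int_0^1\cos(br)\dd r = \frac{\sin b}{b}$ to obtain
\[ M(\lambda,z) = 2\pi\int_0^1\Big(\frac{\sin(\lambda+s|z|)}{\lambda+s|z|}+\frac{\sin(\lambda-s|z|)}{\lambda-s|z|}\Big)\dd s. \]
For $s\in[0,1]$ both denominators are $\ge \lambda-|z|\ge \lambda/2>0$, so $|M(\lambda,z)|\le 6\pi\lambda^{-1}$. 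If instead $|z|>\lambda/2$, I use $2\cos(\lambda r)\sin(r|z|) = \sin((|z|+\lambda)r)+\sin((|z|-\lambda)r)$ together with $\int_0^1\frac{\sin(ar)}{r}\dd r = \operatorname{Si}(a)$ to get
\[ M(\lambda,z) = \frac{2\pi}{|z|}\big(\operatorname{Si}(|z|+\lambda)+\operatorname{Si}(|z|-\lambda)\big), \]
and conclude from the uniform boundedness of the sine integral that $|M(\lambda,z)|\le \frac{4\pi}{|z|}\|\operatorname{Si}\|_{L^\infty(\R)} < 8\pi\|\operatorname{Si}\|_{L^\infty(\R)}\,\lambda^{-1}$. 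Combining the two cases with the trivial bound and the evenness in $\lambda$ yields $|M(\lambda,z)|\lesssim (1+|\lambda|)^{-1}$.

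The one point that needs care — and the reason for the case split — is that a naive integration by parts in $r$ in the formula for $M$ does not work: the amplitude $r\mapsto \frac{\sin(r|z|)}{r|z|}$ has total variation on $[0,1]$ of order $\log|z|$, so one would pick up an unwanted logarithm for large $|z|$. The two representations above avoid this — the small‑$|z|$ one by writing the amplitude itself as an average of cosines, the large‑$|z|$ one by extracting the factor $1/|z|$ and relying only on the boundedness (not the decay) of $\operatorname{Si}$. I would also include the elementary estimate $\|\operatorname{Si}\|_{L^\infty(\R)}<\infty$: one has $|\operatorname{Si}(a)|\le|a|$ for $|a|\le1$, and for $|a|>1$ one writes $\operatorname{Si}(a)=\operatorname{Si}(\operatorname{sgn}a)+\int_{\operatorname{sgn}a}^{a}\frac{\sin t}{t}\dd t$, the last integral being $O(1)$ after a single integration by parts. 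This is the approach of Section~8 of \cite{Joly}, transcribed to the present notation.
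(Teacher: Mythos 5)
Your proof is correct and complete. Note that the paper does not prove this lemma itself but refers to the proof of Lemma 8.1 in \cite{Joly}; your argument --- spherical averaging to reduce $M$ to the one-dimensional oscillatory integral $4\pi\int_0^1\cos(\lambda r)\,\frac{\sin(r|z|)}{r|z|}\,\mathrm{d}r$, followed by explicit evaluation via the sine integral with the case split $|z|\le\lambda/2$ versus $|z|>\lambda/2$ --- is precisely the standard route taken there, and your closing remark correctly identifies why a naive integration by parts would lose a factor $\log|z|$ and hence why the case distinction is needed.
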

	
	\begin{Lemma}\label{LemA}
		The function
		\[A \colon \R \times \N_0 \times \N_0 \to \R, \quad A(\beta,n,j) \coloneqq  \Big(\frac{1}{1+\beta|n-j|} + \frac{1}{1+\beta(n+j)}\Big)\]
		satisfies the inequality
		\[\max_{j=0,\dots,N} \sum_{n=0}^N A(\beta,n,j) \lesssim 1+\beta^{-1}\log(1+N\beta) \]
		for all $N \in \N_0$ and $\beta>0$.
	\end{Lemma}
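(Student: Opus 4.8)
The plan is to treat the two summands of $A$ separately and, in each case, reduce the sum over $n$ to a comparison with the elementary integral $\int_0^N (1+\beta t)^{-1}\dd t = \beta^{-1}\log(1+N\beta)$, obtaining bounds that do not depend on $j$, so that the maximum over $j$ is immediate.

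First I would handle the term $\frac{1}{1+\beta|n-j|}$. Fixing $j \in \{0,\dots,N\}$ and shifting the index by $m = n-j$, one has $\sum_{n=0}^N \frac{1}{1+\beta|n-j|} \le \sum_{|m|\le N} \frac{1}{1+\beta|m|} = 1 + 2\sum_{m=1}^N \frac{1}{1+\beta m}$, which no longer depends on $j$. Since $t \mapsto (1+\beta t)^{-1}$ is decreasing on $[0,\infty)$, each term satisfies $\frac{1}{1+\beta m} \le \int_{m-1}^m \frac{1}{1+\beta t}\dd t$, and summing gives $\sum_{m=1}^N \frac{1}{1+\beta m} \le \int_0^N \frac{1}{1+\beta t}\dd t = \beta^{-1}\log(1+N\beta)$. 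Hence $\sum_{n=0}^N \frac{1}{1+\beta|n-j|} \le 1 + 2\beta^{-1}\log(1+N\beta)$ for every $j$.

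Next, for the term $\frac{1}{1+\beta(n+j)}$, I would simply use $n+j \ge n \ge 0$ to bound it by $\sum_{n=0}^N \frac{1}{1+\beta n} = 1 + \sum_{n=1}^N \frac{1}{1+\beta n}$, again independent of $j$, and the same integral comparison yields $\sum_{n=0}^N \frac{1}{1+\beta(n+j)} \le 1 + \beta^{-1}\log(1+N\beta)$. Adding the two estimates gives $\sum_{n=0}^N A(\beta,n,j) \le 2 + 3\beta^{-1}\log(1+N\beta)$ uniformly in $j \in \{0,\dots,N\}$, whence $\max_{j} \sum_{n=0}^N A(\beta,n,j) \le 3\bigl(1 + \beta^{-1}\log(1+N\beta)\bigr)$, which is the claim.

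There is essentially no obstacle: the argument is elementary, and the only points to keep in mind are that the monotone integral comparison is valid for all $\beta > 0$ and all $N \in \N_0$ (so no case distinction is needed), and that the isolated diagonal terms $m=0$ resp.\ $n=0$ are exactly what produce the additive constant $1$ on the right-hand side, which cannot be absorbed into the logarithm when $N\beta$ is small.
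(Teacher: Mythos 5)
Your proof is correct and follows essentially the same route as the paper: both arguments bound each of the two summands separately by the $j$-independent quantity $1+2\sum_{m=1}^N(1+\beta m)^{-1}$ (your index shift $m=n-j$ is just a repackaging of the paper's split into $n<j$, $n=j$, $n>j$) and then compare with the integral $\int_0^N(1+\beta t)^{-1}\,\mathrm{d}t=\beta^{-1}\log(1+N\beta)$. No issues.
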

	\begin{proof}
		Let $j \in \{0,\dots,N\}$. We have
		\begin{align*}
		\sum_{n=0}^N  \frac{1}{1+\beta(n+j)} &\le \sum_{n=0}^N  \frac{1}{1+\beta n} = 1+ \sum_{n=1}^N  \frac{1}{1+\beta n} \le 1+\beta^{-1} \int_0^{N\beta}\frac{1}{1+t} \dd t \\
		&= 1+\beta^{-1}\log(1+N\beta)
		\end{align*}
		and
		\begin{align*}
		\sum_{n=0}^N  \frac{1}{1+\beta|n-j|} &= 1 + \sum_{n=0}^{j-1}  \frac{1}{1+\beta(j-n)} + \sum_{n=j+1}^{N} \frac{1}{1+\beta(n-j)} \\
		&= 1 + \sum_{n=1}^{j}  \frac{1}{1+\beta n} + \sum_{n=1}^{N-j} \frac{1}{1+\beta n} \le 1 + 2\sum_{n=1}^{N} \frac{1}{1+\beta n}\\
		&\le 1 + 2\beta^{-1}\log(1+N\beta). \qedhere
		\end{align*}
	\end{proof}
	
	Now we show the announced time-discrete endpoint estimates with logarithmic loss.
	
	\begin{Theorem}\label{ThmEndp}
		The estimates
		\begin{align}\label{Endp1}
		\|\pi_Ke^{\iu n\tau|\nabla|} f \|_{\ell^2_{\tau,N} L^\infty} &\lesssim \sqrt{K\tau + \log(1+KN\tau )} \|f\|_{\dot H^1}, \\ \label{EndpDual}
		\Big\|\tau\sum_{k=0}^N\pi_Ke^{-\iu k\tau|\nabla|} F_k \Big\|_{\dot H^{-1}} &\lesssim \sqrt{K\tau + \log(1+KN\tau )} \|F\|_{\ell^2_{\tau,N}L^1}
		\end{align}
		hold for all $\tau \in (0,1]$, $K \ge \tau^{-1}$, $N \in \N_0$, $f \in \dot H^1$, and $F \in \ell^2L^1$.
	\end{Theorem}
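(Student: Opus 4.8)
The plan is to reduce everything to the single dual bound
\[ \Big\|\tau\sum_{n=0}^{N}\pi_K\varphi(n\tau|\nabla|)F_n\Big\|_{\dot H^{-1}} \lesssim \sqrt{K\tau+\log(1+KN\tau)}\,\|F\|_{\ell^2_{\tau,N}L^1}, \qquad \varphi\in\{\cos,\sin\}. \]
Once this is available, (\ref{EndpDual}) follows by writing $e^{-\iu k\tau|\nabla|}=\cos(k\tau|\nabla|)-\iu\sin(k\tau|\nabla|)$ and using the triangle inequality, and then (\ref{Endp1}) follows from (\ref{EndpDual}) by duality: since $\pi_K$ and the half-wave propagators are self-adjoint on $L^2$, since $L^1$ is norming for $L^\infty$ while $\ell^2_{\tau,N}$ is self-dual, and since $|\langle g,h\rangle|\le\|g\|_{\dot H^1}\|h\|_{\dot H^{-1}}$ for the $\dot H^1$--$\dot H^{-1}$ pairing,
\[ \|\pi_Ke^{\iu n\tau|\nabla|}f\|_{\ell^2_{\tau,N}L^\infty} = \sup_{\|F\|_{\ell^2_{\tau,N}L^1}\le1}\Big|\tau\sum_{n=0}^{N}\big\langle f,\,\pi_Ke^{-\iu n\tau|\nabla|}F_n\big\rangle\Big| \le \|f\|_{\dot H^1}\,\sup_{\|F\|_{\ell^2_{\tau,N}L^1}\le1}\Big\|\tau\sum_{n=0}^{N}\pi_Ke^{-\iu n\tau|\nabla|}F_n\Big\|_{\dot H^{-1}}. \]
By density (recall that $\dot H^{-1}$ is a Hilbert space and that $\pi_K$ acts boundedly on it) I may assume throughout that the $F_n$ are Schwartz functions and that only finitely many of them are nonzero.

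To prove the dual bound I would expand the squared $\dot H^{-1}$-norm and push all the (commuting, self-adjoint) Fourier multipliers onto one factor:
\[ \Big\|\tau\sum_{n=0}^{N}\pi_K\varphi(n\tau|\nabla|)F_n\Big\|_{\dot H^{-1}}^{2} = \tau^{2}\sum_{n,k=0}^{N}\big\langle F_n,\ \pi_K|\nabla|^{-2}\varphi(n\tau|\nabla|)\varphi(k\tau|\nabla|)F_k\big\rangle. \]
The product-to-sum identities $\cos a\cos b=\tfrac12(\cos(a-b)+\cos(a+b))$ and $\sin a\sin b=\tfrac12(\cos(a-b)-\cos(a+b))$ rewrite $\varphi(n\tau|\nabla|)\varphi(k\tau|\nabla|)$ in terms of $\cos(m\tau|\nabla|)$ with $m=n-k$ and $m=n+k$. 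The decisive step is the kernel estimate: substituting $\xi=K\eta$ identifies the convolution kernel of $\pi_K|\nabla|^{-2}\cos(m\tau|\nabla|)$ with a constant multiple of $K\,M(mK\tau,K\,\cdot\,)$, so Lemma \ref{LemM} shows that $\pi_K|\nabla|^{-2}\cos(m\tau|\nabla|)$ maps $L^1$ into $L^\infty$ with norm $\lesssim K/(1+K\tau|m|)$. Pairing $L^1$ with $L^\infty$ then bounds the $(n,k)$-summand by $\tfrac{cK}{1+K\tau|n-k|}\|F_n\|_{L^1}\|F_k\|_{L^1}+\tfrac{cK}{1+K\tau(n+k)}\|F_n\|_{L^1}\|F_k\|_{L^1}$, that is,
\[ \Big\|\tau\sum_{n=0}^{N}\pi_K\varphi(n\tau|\nabla|)F_n\Big\|_{\dot H^{-1}}^{2} \lesssim \tau^{2}K\sum_{n,k=0}^{N}A(K\tau,n,k)\,\|F_n\|_{L^1}\|F_k\|_{L^1} \]
with $A$ the function from Lemma \ref{LemA}.

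It then remains to sum the double series. Since $A(K\tau,\cdot,\cdot)$ is nonnegative and symmetric, a Schur/Cauchy--Schwarz argument bounds the sum by $\big(\max_{k\le N}\sum_{n=0}^{N}A(K\tau,n,k)\big)\sum_{n=0}^{N}\|F_n\|_{L^1}^{2}$, and Lemma \ref{LemA} with $\beta=K\tau$ controls the maximum by $1+(K\tau)^{-1}\log(1+KN\tau)$. Using $\sum_{n=0}^{N}\|F_n\|_{L^1}^{2}=\tau^{-1}\|F\|_{\ell^2_{\tau,N}L^1}^{2}$, the right-hand side collapses to $\tau^{2}K\big(1+(K\tau)^{-1}\log(1+KN\tau)\big)\tau^{-1}\|F\|_{\ell^2_{\tau,N}L^1}^{2}=\big(K\tau+\log(1+KN\tau)\big)\|F\|_{\ell^2_{\tau,N}L^1}^{2}$; taking square roots gives the dual bound, and hence the theorem.

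The step I expect to be the main obstacle is arranging the $TT^*$ computation so that the forbidden $\ell^2L^\infty$-endpoint never forces the use of a uniform-in-$K$ bound for $\pi_K|\nabla|^{-2}\colon L^1\to L^\infty$, which fails: it is precisely the product-to-sum trick that replaces a single divergent contribution by the slowly decaying weights $(1+K\tau|m|)^{-1}$, whose summation over $m$ produces the logarithm encoded in Lemma \ref{LemA}. The remaining care lies in the bookkeeping — checking that the factor $K$ pulled out of the kernel estimate is cancelled exactly by the $\tau^{2}\cdot\tau^{-1}$ coming from the scaled norms together with the $(K\tau)^{-1}$ inside Lemma \ref{LemA} — and in justifying the duality characterization of $\|\cdot\|_{\ell^2_{\tau,N}L^\infty}$ and the density reductions.
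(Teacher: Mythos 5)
Your proposal is correct and follows essentially the same route as the paper: both prove the dual $\dot H^{-1}$ bound by expanding the square, treating $\cos$ and $\sin$ via product-to-sum identities so that the kernel becomes (a rescaling of) $M$ from Lemma \ref{LemM}, and then closing with the symmetric Cauchy--Schwarz/Schur argument and Lemma \ref{LemA} with $\beta=K\tau$. The only cosmetic difference is that the paper performs the dilation reduction to $K=1$, $\tau=1$ up front, whereas you absorb the same scaling into the kernel computation; the bookkeeping $\tau^2K\cdot(K\tau)^{-1}\cdot\tau^{-1}$ you flag indeed checks out.
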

	\begin{proof}
		By duality, the two estimates are equivalent. Due to a scaling argument, for \eqref{Endp1} it is enough to show 
		\begin{equation}\label{EndpktAlpha}
		\|\pi_1e^{\iu n\beta|\nabla|} f \|_{\ell^2_{1,N} L^\infty} \lesssim \sqrt{1 + \beta^{-1}\log(1+N\beta)} \|f\|_{\dot H^1}
		\end{equation}
		for any $\beta>0$. Indeed, this estimate and \eqref{scaling} imply
		\begin{align*}
		\|\pi_Ke^{\iu n \tau |\nabla|}f\|_{\ell^2_{\tau,N}L^\infty}  &= \tau^{\frac12} \|D_K \pi_1e^{\iu n K\tau |\nabla|} D_{K^{-1}}f\|_{\ell^2_{1,N}L^\infty} \\
		&= \tau^{\frac12} \| \pi_1e^{\iu n K\tau |\nabla|} D_{K^{-1}}f\|_{\ell^2_{1,N}L^\infty}\\
		&\lesssim \tau^{\frac12}\sqrt{1 + (K\tau)^{-1}\log(1+NK\tau)} \|D_{K^{-1}}f\|_{\dot H^1} \\
		&=  (K\tau)^{\frac12}\sqrt{1 + (K\tau)^{-1}\log(1+NK\tau)} \|f\|_{\dot H^1} \\
		&=\sqrt{K\tau + \log(1+NK\tau)} \|f\|_{\dot H^1}.
		\end{align*}
		
		We show \eqref{EndpktAlpha} via the dual estimate
		\begin{equation}\label{EndpktAlphaDual}
		\Big\|\sum_{n=0}^N\pi_1e^{-\iu n\beta|\nabla|} F_n \Big\|_{\dot H^{-1}} \lesssim \sqrt{1 + \beta^{-1}\log(1+N\beta)} \|F\|_{\ell^2_{1,N}L^1}
		\end{equation}
		for $F \in \ell^2L^1$. Instead of the exponential, we treat sine and cosine. From the definition of the $\dot H^1$-norm and Fubini's theorem, we deduce
		\begin{align*}
		\Big\|&\sum_{n=0}^N\pi_1\sin(n\beta|\nabla|) F_n \Big\|_{\dot H^{-1}}^2 =  \Big\||\xi|^{-1}\sum_{n=0}^N\mathbbm{1}_{B(0,1)}\sin(n\beta|\xi|) \hat F_n(\xi) \Big\|_{L^2}^2 \\
		&= \int_{B(0,1)} |\xi|^{-2}\sum_{n,j=0}^N\sin(n\beta|\xi|)\sin(j\beta|\xi|) \hat F_n(\xi)\overline {\hat F_j(\xi)} \dd \xi \\
		&= (2\pi)^{-3} \int_{B(0,1)} |\xi|^{-2}\sum_{n,j=0}^N\sin(n\beta|\xi|)\sin(j\beta|\xi|) \\
		&\quad \cdot \int_{\R^3} \int_{\R^3} e^{\iu(y-x)\cdot \xi}F_n(x)\overline {F_j(y)} \dd x \dd y \dd \xi \\
		&= (2\pi)^{-3}  \sum_{n,j=0}^N \int_{\R^3} \int_{\R^3} G_-(n\beta,j\beta,y-x) F_n(x)\overline {F_j(y)} \dd x \dd y,
		\end{align*}
		where
		\begin{equation*}
		G_-(a,b,z) \coloneqq \int_{B(0,1)} |\xi|^{-2}\sin(a|\xi|)\sin(b|\xi|)e^{\iu z\cdot \xi}  \dd \xi.
		\end{equation*}
		Analogously, we obtain
		\begin{align*}
		\Big\|&\sum_{n=0}^N\pi_1\cos(n\beta|\nabla|) F_n \Big\|_{\dot H^{-1}}^2 \\
		&=  
		(2\pi)^{-3}  \sum_{n,j=0}^N \int_{\R^3} \int_{\R^3} G_+(n\beta,j\beta,y-x) F_n(x)\overline {F_j(y)} \dd x \dd y,
		\end{align*}
		with
		\begin{equation*}
		G_+(a,b,z) \coloneqq \int_{B(0,1)} |\xi|^{-2}\cos(a|\xi|)\cos(b|\xi|)e^{\iu z\cdot \xi}  \dd \xi.
		\end{equation*}
		Trigonometric identities 
		yield
		\begin{equation*}
		G_{\pm}(a,b,z) = \frac12 \Big(M(a-b,z) \pm M(a+b,z) \Big)
		\end{equation*}
		with $M(\lambda,z)$ from Lemma \ref{LemM}. Combined with this lemma, the above equations lead to
		\begin{align*}
		\Big\|&\sum_{n=0}^N\pi_1e^{-\iu n\beta|\nabla|} F_n \Big\|_{\dot H^{-1}}^2 \\
		&\lesssim \sum_{n,j=0}^N \int_{\R^3} \int_{\R^3} \Big(|M(\beta(n-j),y-x)|+|M(\beta(n+j),y-x)|\Big)  \\
		&\qquad \qquad \qquad \quad \cdot |F_n(x)||F_j(y)|\dd x \dd y \\
		&\lesssim \sum_{n,j=0}^N \int_{\R^3} \int_{\R^3} \Big(\frac{1}{1+\beta|n-j|}+\frac{1}{1+\beta|n+j|}\Big) |F_n(x)||F_j(y)| \dd x \dd y \\
		&= \sum_{n,j=0}^N  A(\beta,n,j) \|F_n\|_{L^1}\|F_j\|_{L^1},
		\end{align*}
		with $A$ from Lemma \ref{LemA}. We can now apply Cauchy--Schwarz twice and Lemma \ref{LemA}. Also noting that $A(\beta,n,j)$ is symmetric in $n$ and $j$, we estimate
		\begin{align*}
		\sum_{n,j=0}^N  A(&\beta,n,j) \|F_n\|_{L^1}\|F_j\|_{L^1}\\
		&\le \|F\|_{\ell^2_{1,N}L^1} \Big[\sum_{n=0}^N\Big(\sum_{j=0}^NA(\beta,j,n)\|F_j\|_{L^1}\Big)^2\Big]^{\frac12} \\
		&\le \|F\|_{\ell^2_{1,N}L^1}\Big[\sum_{n=0}^N\Big(\sum_{j=0}^NA(\beta,j,n)\Big)\Big(\sum_{j=0}^NA(\beta,n,j)\|F_j\|_{L^1}^2\Big)\Big]^{\frac12} \\
		&\le \Big(\max_{n=0,\dots,N}\sum_{j=0}^NA(\beta,n,j)\Big)^{\frac12}\Big(\max_{j=0,\dots,N}\sum_{n=0}^NA(\beta,n,j)\Big)^{\frac12}\|F\|_{\ell^2_{1,N}L^1}^2 \\
		&\lesssim \Big(1+\beta^{-1}\log(1+N\beta)\Big)\|F\|_{\ell^2_{1,N}L^1}^2,
		\end{align*} 
		which shows \eqref{EndpktAlphaDual}.
	\end{proof}
	
	
	In Section \ref{SecKorr} we also need an inhomogeneous estimate with the forbidden exponents $(\tilde p', \tilde q',1) = (2,1,1)$ on the right-hand side. Estimates with $\tilde q \neq 2$ can often be deduced by means of the \emph{Christ--Kislev Lemma} from \cite{ChristKiselev}.  
	
	\begin{Lemma}\label{LemChristKiselev}
		Let $X$ and $Y$ be measure spaces, and $E$ and $F$ be Banach spaces, $1 \le p < q \le \infty$, and $T \colon L^p(X,E) \to L^q(Y,F)$ be a bounded linear operator. Furthermore, let $(X_j)_{j\in \N}$ be a sequence of measurable subsets of $X$ such that $X_j \subseteq X_{j+1}$ for all $j \in \N$. We define the \emph{Christ--Kiselev maximal operator} 
		\[ [T^*h](y) \coloneqq \sup_{j \in \N} \|[T(h\mathbbm{1}_{X_j})](y)\|_F \]
		for all $h \in L^p(X,E)$ and almost every $y \in Y$. Then we have the estimate
		\[\|T^*h\|_{L^q(Y)} \le (1-2^{-(p^{-1}-q^{-1})})^{-1}\|T\| \|h\|_{L^p(X,E)} \]
		for all $h \in L^p(X,E)$.
	\end{Lemma}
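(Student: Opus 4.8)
The plan is to use the dyadic-decomposition argument of Christ and Kiselev \cite{ChristKiselev}: after reducing to a finite chain, one rewrites each truncation $h\mathbbm{1}_{X_j}$ as a telescoping sum of ``single-scale'' pieces whose $L^p(X,E)$-norms decay geometrically in the scale, with at most one piece at each scale, so that $T^*h$ is dominated by a summable series of single-scale contributions; the geometric decay together with the loss $p<q$ then produces exactly the stated constant.

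I would first reduce to finitely many sets. The function $T^*h$ is the increasing limit, as $M\to\infty$, of $y\mapsto\max_{j\le M}\|T(h\mathbbm{1}_{X_j})(y)\|_F$; hence by monotone convergence for $q<\infty$, and for $q=\infty$ by the elementary identity $\|\sup_j f_j\|_{L^\infty(Y)}=\sup_M\|\max_{j\le M}f_j\|_{L^\infty(Y)}$ (which uses only continuity of the measure from below), it suffices to bound the truncated maximal operator for a finite chain $X_1\subseteq\dots\subseteq X_M$ uniformly in $M$. Replacing $h$ by $h\mathbbm{1}_{X_M}$ does not change this truncated maximal function and does not increase $\|h\|_{L^p(X,E)}$, so I may assume $h$ vanishes off $X_M$, and, after scaling, that $\|h\|_{L^p(X,E)}=1$. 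Then $\dd\mu:=\|h(x)\|_E^p\dd x$ is a probability measure carried by $X_M$, and with $X_0:=\emptyset$ and $t_j:=\mu(X_j)$ one has $0=t_0\le t_1\le\dots\le t_M=1$.

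The key and only delicate step is to put the chain in canonical form by reparametrizing with $\mu$: I want a measurable map $\phi\colon X\to[0,1)$ with $\phi_*\mu$ equal to Lebesgue measure and $\phi^{-1}([0,t_j))=X_j$ for every $j$. Such a $\phi$ need not exist as it stands, since $\mu$ may have atoms. To remedy this I would pass to $X\times[0,1]$ equipped with $\mu\otimes\mathrm{Leb}$, replace $h$ by $h\otimes 1$, each $X_j$ by $X_j\times[0,1]$, and $T$ by $T\circ P$ with $(Pg)(x):=\int_0^1 g(x,s)\dd s$ the fiberwise average. Jensen's inequality gives $\|P\|\le 1$, hence $\|T\circ P\|\le\|T\|$; since $P\big((h\otimes 1)\mathbbm{1}_{X_j\times[0,1]}\big)=h\mathbbm{1}_{X_j}$, the truncated maximal operator and the norm $\|h\|_{L^p}$ are unaffected, while the new measure is non-atomic. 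For a non-atomic finite measure space, mapping each increment measure-preservingly onto $[t_{j-1},t_j)$ by a standard bisection and gluing produces the required $\phi$.

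Finally, decompose and estimate. For $n\in\N_0$ let $I_{n,k}=[(k-1)2^{-n},k2^{-n})$ $(k=1,\dots,2^n)$ be the dyadic subintervals of $[0,1)$ and set $\Omega_{n,k}:=\phi^{-1}(I_{n,k})$, so $\mu(\Omega_{n,k})=2^{-n}$. The binary expansion of $t_j$ writes $[0,t_j)$ as a disjoint union of at most one $I_{n,\cdot}$ per generation $n$, whence $X_j=\bigsqcup_n\Omega_{n,k_n(j)}$ using at most one index $k_n(j)$ for each $n$. With $G_n(y):=\max_{1\le k\le 2^n}\|T(h\mathbbm{1}_{\Omega_{n,k}})(y)\|_F$, linearity of $T$ and the triangle inequality give $\|T(h\mathbbm{1}_{X_j})(y)\|_F\le\sum_{n\ge0}G_n(y)$, hence $T^*h\le\sum_{n\ge0}G_n$ almost everywhere. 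For $q<\infty$, bounding the maximum by the $\ell^q$-sum, using boundedness of $T$, and $\|h\mathbbm{1}_{\Omega_{n,k}}\|_{L^p(X,E)}^p=\mu(\Omega_{n,k})=2^{-n}$, we get
\[\|G_n\|_{L^q(Y)}^q\le\sum_{k=1}^{2^n}\|T\|^q\,\|h\mathbbm{1}_{\Omega_{n,k}}\|_{L^p(X,E)}^q=2^n\,\|T\|^q\,2^{-nq/p},\]
so $\|G_n\|_{L^q(Y)}\le\|T\|\,2^{-n(1/p-1/q)}$ (for $q=\infty$ the same bound follows with the supremum over $k$ in place of the $\ell^q$-sum, $1/q=0$). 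Since $p<q$ the series $\sum_{n\ge0}2^{-n(1/p-1/q)}$ converges, and summing yields $\|T^*h\|_{L^q(Y)}\le(1-2^{-(p^{-1}-q^{-1})})^{-1}\|T\|$; undoing the normalization gives the assertion. The main obstacle is the reparametrization in the third paragraph; once the chain has the form $\{\phi^{-1}([0,t_j))\}$, the remaining steps are the bookkeeping above, and the hypothesis $p<q$ enters exactly where the single-scale series must be summed.
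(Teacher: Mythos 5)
The paper does not prove this lemma; it is quoted from \cite{ChristKiselev} and used as a black box, so there is no in-paper argument to compare against. Your proof is the standard Christ--Kiselev argument and it is correct: reduction to a finite chain by monotone convergence, normalization so that $\mathrm{d}\mu=\|h\|_E^p\,\mathrm{d}x$ is a probability measure, reparametrization of the chain as $X_j=\phi^{-1}([0,t_j))$, binary expansion of each $t_j$ to write $h\mathbbm{1}_{X_j}$ as a telescoping sum with at most one dyadic piece per generation, and summation of the single-scale bounds $\|G_n\|_{L^q}\le\|T\|\,2^{-n(1/p-1/q)}$, which reproduces exactly the constant $(1-2^{-(p^{-1}-q^{-1})})^{-1}$. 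You correctly identify the one genuinely delicate point (atoms of $\mu$ obstructing the reparametrization) and your fix --- tensoring with $([0,1],\mathrm{Leb})$ and precomposing with the fiberwise average $P$, which has norm at most one and leaves the truncated maximal function unchanged --- is sound. Two minor points you may want to make explicit in a written version: the telescoping sum over generations can be infinite, so the pointwise triangle inequality $\|T(h\mathbbm{1}_{X_j})(y)\|_F\le\sum_n G_n(y)$ should be justified via $L^q$-convergence of the partial sums and passage to an a.e.\ convergent subsequence; and the identity $\phi^{-1}([0,t_j))=X_j$ from the bisection construction holds only up to $\mu$-null sets, which suffices here.
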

	
	We show the estimate to be used in Section \ref{SecKorr}. There we need uniformity with respect to shifts $s \in \R$ in the wave propagator.
	
	\begin{Corollary}\label{KorInhomCK}
		Let $(p,q,\gamma)$ be wave admissible. Then the inequality
		\begin{align*}
		\Big\|\tau&\sum_{k=0}^{n-1} \pi_Ke^{\iu(n-k+s)\tau|\nabla|} |\nabla|^{-1} F_k \Big\|_{\ell^p_{\tau,N}L^q} \\ &\lesssim_{p,q} (K\tau)^{\frac1p}K^\gamma\sqrt{K\tau+\log(KN\tau)}\|F\|_{\ell^2_{\tau,N-1}L^1}  
		\end{align*}
		holds for all $\tau \in (0,1]$, $K \ge \tau^{-1}$, $s \in \R$, $N \in \N$, and $F \in \ell^2L^1$.
	\end{Corollary}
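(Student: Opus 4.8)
The plan is to combine the time-discrete endpoint estimate of Theorem \ref{ThmEndp} with the Christ--Kiselev Lemma \ref{LemChristKiselev}, in the same spirit as the alternative proof of the inhomogeneous estimate (\ref{DisStrich2}). First I would reduce to the untruncated, unshifted situation. The shift $s$ is harmless: writing $e^{\iu(n-k+s)\tau|\nabla|} = e^{\iu s\tau|\nabla|}e^{\iu(n-k)\tau|\nabla|}$ and noting that $e^{\iu s\tau|\nabla|}$ commutes with $\pi_K$, $|\nabla|^{-1}$ and the square-function machinery and is an $L^2$-isometry, one sees that none of the Strichartz-type bounds used below depend on $s$; alternatively one absorbs $e^{\iu s\tau|\nabla|}$ into the endpoint estimate at the cost of a constant independent of $s$. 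So it suffices to bound $\tau\sum_{k=0}^{n-1}\pi_Ke^{\iu(n-k)\tau|\nabla|}|\nabla|^{-1}F_k$ in $\ell^p_{\tau,N}L^q$.

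Next I would set up the Christ--Kiselev argument. Consider the full (non-causal) operator $T$ sending a finite sequence $(F_k)_{k=0}^{N-1}$ to $\big(\tau\sum_{k=0}^{N-1}\pi_Ke^{\iu(n-k)\tau|\nabla|}|\nabla|^{-1}F_k\big)_{n=0}^{N}$, regarded as a map from $\ell^2_{\tau,N-1}L^1$ into $\ell^p_{\tau,N}L^q$. The operator norm of $T$ is controlled by the composition of the dual endpoint estimate (\ref{EndpDual}) and the homogeneous estimate (\ref{DisStrich1}): for a fixed $n$, apply (\ref{EndpDual}) (together with Bernstein, cf.\ Remark \ref{RemDisStr}~a), to bound $\|\tau\sum_k \pi_Ke^{-\iu k\tau|\nabla|}|\nabla|^{-1}F_k\|_{L^2}$ by $K^{\gamma}\sqrt{K\tau+\log(1+KN\tau)}\,\|F\|_{\ell^2_{\tau,N-1}L^1}$ — here one uses that $|\nabla|^{-1}\pi_K$ composed with $|\nabla|$-loss in the Strichartz estimate costs only $K^{\gamma-1}$, and the estimate (\ref{EndpDual}) supplies the remaining $|\nabla|^{-1}$ factor — and then apply (\ref{DisStrich1}) (again via Remark \ref{RemDisStr}~a, with the $K^\gamma$ already accounted for) to propagate this $L^2$-datum into $\ell^p_{\tau,N}L^q$ with a further factor $(K\tau)^{1/p}$. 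Combining, $\|T\| \lesssim_{p,q} (K\tau)^{1/p}K^{\gamma}\sqrt{K\tau+\log(1+KN\tau)}$, which is exactly the claimed right-hand side (up to replacing $\log(1+KN\tau)$ by $\log(KN\tau)$, harmless since $K\tau\ge1$ and hence $KN\tau \ge 1$ whenever $N\ge1$, absorbing the additive constant into the $K\tau$ term).

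Then I would invoke Lemma \ref{LemChristKiselev} with $X$ the discrete measure space $\{0,\dots,N-1\}$ carrying the weighted counting measure $\tau\sum$, the exhausting sets $X_j = \{0,\dots,j\}$, $E=L^1$, $Y$ the discrete space $\{0,\dots,N\}$ with its weighted counting measure, $F=L^q$, and exponents $2 = p_{\text{CK}} < q_{\text{CK}} = p$ (note $p>2$ by wave admissibility, so the hypothesis $p_{\text{CK}}<q_{\text{CK}}$ is met). The Christ--Kiselev maximal operator dominates, pointwise in $n$, the causal sum $\tau\sum_{k=0}^{n-1}\pi_Ke^{\iu(n-k)\tau|\nabla|}|\nabla|^{-1}F_k$ (taking $j=n-1$ in the supremum), so its $\ell^p_{\tau,N}L^q$-norm is bounded by $(1-2^{-(1/2-1/p)})^{-1}\|T\|\,\|F\|_{\ell^2_{\tau,N-1}L^1}$, and the Christ--Kiselev constant depends only on $p$. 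This yields the assertion.

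The main obstacle is the bookkeeping in the composition $\|T\|\le$ (dual endpoint)$\,\cdot\,$(homogeneous Strichartz): one must track carefully how the operator $|\nabla|^{-1}$ and the two derivative losses interact so that the powers of $K$ come out as $K^\gamma$ and not more, and one must make sure the logarithmic factor — which appears only in the endpoint step, not in the $\ell^p L^q$ step — enters linearly under the square root rather than getting squared. A clean way to organize this is to factor $T = T_2 \circ T_1$ where $T_1$ maps $(F_k) \mapsto \tau\sum_k \pi_K e^{-\iu k\tau|\nabla|}|\nabla|^{-1}F_k \in \dot H^{-\gamma}$ (estimated by (\ref{EndpDual}) plus Bernstein with the precise power $K^{\gamma}$) and $T_2$ maps $g \mapsto (\pi_K e^{\iu n\tau|\nabla|}g)_n$ with $g\in \dot H^{-\gamma}$, estimated by the dual of (\ref{DisStrich1}); but since here we want the forward estimate in $\ell^pL^q$ we instead split at the $L^2$-level, applying (\ref{EndpDual}) with an extra $|\nabla|^{-1}$ absorbed and then (\ref{DisStrich1}) in the form of Remark \ref{RemDisStr}~a). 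Once the norm of $T$ is pinned down, the Christ--Kiselev step is routine.
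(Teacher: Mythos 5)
Your proposal is correct and follows essentially the same route as the paper: reduce via the Christ--Kiselev Lemma \ref{LemChristKiselev} to the full non-causal operator, then bound its norm by composing the homogeneous discrete Strichartz estimate (\ref{DisStrich1}), Bernstein's inequality (yielding the factor $K^\gamma$), and the dual endpoint estimate (\ref{EndpDual}). Your bookkeeping of the powers of $K$, the single (unsquared) logarithmic factor, and the harmlessness of the shift $s$ all match the paper's argument.
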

	\begin{proof}
		We define the sets $X_0 \coloneqq \emptyset$ and $X_j \coloneqq \{0,\dots,\min\{j-1,N\}\}$ for $j \in \N$, and $X \coloneqq\{0,\dots,N\}$. By the Christ--Kiselev Lemma \ref{LemChristKiselev}, it suffices to prove the estimate for the operator given by
		\[ (TF)_n \coloneqq \tau\sum_{k=0}^N \pi_Ke^{\iu(n-k+s)\tau|\nabla|} |\nabla|^{-1} F_k.  \]
		We compute
		\begin{align*}
		\Big\|&\tau\sum_{k=0}^N \pi_Ke^{\iu(n-k+s)\tau|\nabla|} |\nabla|^{-1} F_k \Big\|_{\ell^p_{\tau,N}L^q} \\ &\lesssim_{p,q} (K\tau)^{\frac1p} \Big\|\tau\sum_{k=0}^N \pi_Ke^{\iu(-k+s)\tau|\nabla|} |\nabla|^{-1} F_k \Big\|_{\dot H^\gamma} \\
		&\le (K\tau)^{\frac1p}K^\gamma \Big\|\tau\sum_{k=0}^N \pi_Ke^{-\iu k\tau|\nabla|}  F_k \Big\|_{\dot H^{-1}} \\
		&\lesssim (K\tau)^{\frac1p}K^\gamma\sqrt{K\tau + \log(1+KN\tau )}\|F\|_{\ell^2_{\tau,N}L^1}.
		\end{align*}
		Here we first use the homogeneous Strichartz estimate \eqref{DisStrich1}. Then we apply Bernstein's inequality to get rid of the derivative loss of order $\gamma$, at the cost of the factor $K^\gamma$. In the end, we employ the endpoint estimate \eqref{EndpDual} from Theorem \ref{ThmEndp}. 
	\end{proof}
	
	Furthermore, we need the following ``hybrid'' estimates to control time-continuous solutions in time-discrete norms. Their proof is only sketched since it does not require new ideas.
	
	\begin{Corollary}\label{KorHybStrich}
		Let $(p,q,\gamma)$ be wave admissible. Then the estimates
		\begin{align}\label{Hyb1}
		\Big\|\int_{-\infty}^{n\tau}\pi_Ke^{\iu (n\tau-s)|\nabla|} F(s) \dd s \Big\|_{\ell^p_\tau L^q} &\lesssim_{p,q} (K\tau)^{\frac{1}{p}}\|F\|_{L^1_\R\dot H^{\gamma}}, \\ \label{Hyb2}
		\Big\|\int_{0}^{n\tau}\pi_Ke^{\iu (n\tau-s)|\nabla|} F(s) \dd s \Big\|_{\ell^2_{\tau,N} L^\infty} &\lesssim \sqrt{K\tau + \log(1+KN\tau )}\|F\|_{L^{1}([0,N\tau],\dot H^1)} 
		\end{align}
		hold for all  $\tau \in (0,1]$, $K \ge \tau^{-1}$, $F \in L^1_\R \dot H^\gamma$, and $N \in \N_0$.
	\end{Corollary}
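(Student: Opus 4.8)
The plan is to derive each hybrid estimate from the purely time-discrete estimate already at hand — inequality (\ref{DisStrich1}) of Theorem \ref{ThmDisStrich} for (\ref{Hyb1}), and inequality (\ref{Endp1}) of Theorem \ref{ThmEndp} for (\ref{Hyb2}) — by first discarding the causal truncation in the time integral and then restoring it with the Christ--Kiselev Lemma \ref{LemChristKiselev}, in the same spirit as the proof of Corollary \ref{KorInhomCK}. As usual it suffices to argue for Schwartz data with compact time support and pass to the limit.

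\textbf{Step 1: the non-causal estimates.} For (\ref{Hyb1}), the group law $e^{\iu(n\tau-s)|\nabla|} = e^{\iu n\tau|\nabla|}e^{-\iu s|\nabla|}$ together with the fact that $\pi_K$ commutes with the half-wave group gives $\int_\R\pi_Ke^{\iu(n\tau-s)|\nabla|}F(s)\dd s = \pi_Ke^{\iu n\tau|\nabla|}g$ with $g \coloneqq \int_\R e^{-\iu s|\nabla|}F(s)\dd s$. Since $e^{-\iu s|\nabla|}$ is an isometry on $\dot H^\gamma$, Minkowski's inequality for the Bochner integral yields $\|g\|_{\dot H^\gamma}\le\|F\|_{L^1_\R\dot H^\gamma}$, and then (\ref{DisStrich1}) gives $\|\pi_Ke^{\iu n\tau|\nabla|}g\|_{\ell^p_\tau L^q}\lesssim_{p,q}(K\tau)^{1/p}\|F\|_{L^1_\R\dot H^\gamma}$. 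For (\ref{Hyb2}) I would argue identically, with $g\coloneqq\int_0^{N\tau}e^{-\iu s|\nabla|}F(s)\dd s$, using that $e^{-\iu s|\nabla|}$ is a $\dot H^1$-isometry and invoking (\ref{Endp1}) in place of (\ref{DisStrich1}) to produce the bound $\sqrt{K\tau+\log(1+KN\tau)}\,\|F\|_{L^1([0,N\tau],\dot H^1)}$.

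\textbf{Step 2: restoring causality.} Let $T$ denote the full operator from Step 1, i.e.\ $TF\coloneqq\big(\pi_Ke^{\iu n\tau|\nabla|}\int_\R e^{-\iu s|\nabla|}F(s)\dd s\big)_{n\in\Z}$ regarded as a bounded map $L^1_\R\dot H^\gamma\to\ell^p_\tau L^q$ (and analogously for (\ref{Hyb2}), with $s$ restricted to $[0,N\tau]$ and target $\ell^2_{\tau,N}L^\infty$). With the obvious increasing filtration of time intervals with right endpoints $m\tau$, $m\in\Z$ (resp.\ $m\in\{0,\dots,N\}$) — which, for time-compactly-supported $F$, can be reindexed by $\N$ — the truncated integral on the left-hand side of (\ref{Hyb1}) (resp.\ (\ref{Hyb2})) is, for each output index $n$, of the form $[T(F\mathbbm{1}_{X_j})]_n$ with $X_j$ having right endpoint $n\tau$; hence it is pointwise dominated by the Christ--Kiselev maximal function $[T^*F]_n$. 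Lemma \ref{LemChristKiselev} applies because the time-integrability exponent on the right is $1$, strictly below the exponent on the left ($p\ge2$, resp.\ $2$), and it transfers the operator bound from Step 1 to $\|T^*F\|$ up to the constant $(1-2^{-(1-1/p)})^{-1}$ (resp.\ $(1-2^{-1/2})^{-1}$). Combining the two steps yields (\ref{Hyb1}) and (\ref{Hyb2}).

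\textbf{Main difficulty.} There is no genuine obstacle: all the analytic content sits in Theorems \ref{ThmDisStrich} and \ref{ThmEndp} and in the Christ--Kiselev lemma. The only points that need a little care are checking the strict inequality of time exponents that makes Lemma \ref{LemChristKiselev} applicable, and the routine reindexing of the two-sided filtration in (\ref{Hyb1}). An alternative that sidesteps Christ--Kiselev entirely is to repeat the duality computation from the proof of Theorem \ref{ThmDisStrich} (resp.\ Theorem \ref{ThmEndp}) with a continuous-time source term, pulling the shift $e^{\iu s|\nabla|}$ out onto the $\dot H^{-\gamma}$-valued (resp.\ $\dot H^{-1}$-valued) dual sum where it acts isometrically; this likewise reduces matters to the dual estimate (\ref{DualHom}) (resp.\ (\ref{EndpDual})).
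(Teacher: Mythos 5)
Your proposal is correct and matches the paper's own (sketched) proof, which offers exactly the same two routes: reduction to (\ref{DisStrich1}) resp.\ (\ref{Endp1}) via the Christ--Kiselev Lemma with the filtration $X_j=(-\infty,j\tau]$ resp.\ $X_j=[0,\min\{j,N\}\tau]$, or alternatively the duality computation from the proof of (\ref{DisStrich2}) with one sum replaced by an integral. You have merely fleshed out the Christ--Kiselev variant (correctly checking the exponent condition $1<p$, resp.\ $1<2$) where the paper leads with the duality variant.
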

	\begin{proof}
		Two proofs are possible. Since we have an $L^2$-based Sobolev space in the space variable on the right-hand side of the inequality, we can apply the same technique as in the proof of \eqref{DisStrich2} in Theorem \ref{ThmDisStrich}, just replacing one of the two sums by an integral. For \eqref{Hyb2} one uses \eqref{EndpDual} instead of \eqref{DualHom}. An alternative way would be to proceed as in the previous proof and use the Christ--Kiselev Lemma with the sets $X_j \coloneqq (-\infty,j\tau] \subseteq \R$ for the first, resp.\ $X_j \coloneqq [0,\min\{j,N\}\tau] \subseteq [0,N\tau]$ for the second estimate. So \eqref{Hyb1} is reduced to \eqref{DisStrich1}, and \eqref{Hyb2} to \eqref{Endp1}.
	\end{proof}
	
	\section{Nonlinear Wave equation}\label{SecWave}
	We introduce some notation and recall some well-known results from the wellposedness theory of the nonlinear wave equation \eqref{NLW2}. For the nonlinearity we write $g(u) \coloneqq -\mu|u|^{\alpha-1}u$. We will often use the elementary pointwise Lipschitz bound
	\begin{equation}\label{NichtlinLip}
	|g(v)-g(w)| \lesssim (|v|^{\alpha-1}+|w|^{\alpha-1})|v-w|
	\end{equation}
	for $g$.
	It is convenient to reformulate \eqref{NLW2} as a system having first order in time. Therefore we set
	\begin{equation}\label{Defs}
	U \coloneqq \begin{pmatrix}u \\ v\end{pmatrix} \mathrel{\widehat{=}} \begin{pmatrix}u \\ \partial_t u\end{pmatrix},\ A \coloneqq \begin{pmatrix}0 & I \\ \Delta & 0 \end{pmatrix},\ G(U) \coloneqq \begin{pmatrix}0 \\ g(u)\end{pmatrix},\ U^0 \coloneqq \begin{pmatrix}u^0 \\ v^0\end{pmatrix},
	\end{equation}
	and obtain the equivalent first-order system
	\begin{equation}
	\begin{aligned} \label{NLW} \partial_{t} U(t) &= AU(t) +G(U(t)), \quad t \in [0,T], \\ U(0)&=U^0.
	\end{aligned}\end{equation}
	We are looking for a mild solution of \eqref{NLW}, i.e., a function $U \in C([0,T],\dot H^1\times L^2)$ satisfying the Duhamel formula
	\begin{equation}\label{Duh}
	U(t) = e^{t A }U^0+\int_0^t e^{(t-s)A}G(U(s)) \dd s
	\end{equation}
	for all $t \in [0,T]$. The group $e^{tA}$ is given by
	\begin{equation}\label{WaveGroup}
	e^{tA} = \begin{pmatrix}\cos(t|\nabla|) & |\nabla|^{-1} \sin(t|\nabla|) \\ -|\nabla| \sin(t|\nabla|) & \cos(t|\nabla|) \end{pmatrix},\quad t \in \R. \end{equation}
	Inserting \eqref{WaveGroup}, the first line of \eqref{Duh} reads
	\begin{equation}\label{Duh1}
	u(t) = \cos(t|\nabla|)u^0 + |\nabla|^{-1} \sin(t|\nabla|)v^0 + \int_0^t |\nabla|^{-1} \sin((t-s)|\nabla|)g(u(s)) \dd s.
	\end{equation}
	For the linear part of the evolution we abbreviate 
	\begin{equation}\label{Duh1lin}
	S(t)(u^0,v^0) \coloneqq \cos(t|\nabla|)u^0 + |\nabla|^{-1} \sin(t|\nabla|)v^0.
	\end{equation}
	In this setting we can apply the Strichartz estimates as formulated in Section \ref{SecStrich}, since we can decompose
	\[\sin(t|\nabla|) = \frac{1}{2\iu}(e^{\iu t |\nabla|}-e^{-\iu t |\nabla|}), \qquad \cos(t|\nabla|) = \frac12(e^{\iu t |\nabla|}+e^{-\iu t |\nabla|}).\]
	In the case $\alpha = 3$, Sobolev embedding shows that the nonlinearity $G$ leaves the space $\dot H^1 \times L^2$ invariant. Therefore, local wellposedness can be shown in a standard way using the Duhamel formula and Banach's fixed point theorem. If $\alpha >3$, a more sophisticated analysis is needed since in that case the nonlinearity loses too much integrability. By means of Strichartz estimates, local wellposedness can be shown up to the critical power $\alpha=5$. 
	
	Let $\alpha \in [3,5]$. We define an exponent $p_\alpha \in [4,\infty]$ such that $(p_\alpha,3(\alpha-1),1)$ are wave admissible, i.e., by the relation
	\[\frac{1}{p_\alpha}+\frac{1}{\alpha-1} = \frac{1}{2}, \]
	see \eqref{DefAdmissible}.
	The local wellposedness theorem for the nonlinear wave equation \eqref{NLW2} reads as follows. A proof for the critical case $\alpha=5$ is given in, e.g., Theorem IV.3.1 of \cite{Sogge}, and the subcritical case $\alpha<5$ can be proven similarly. See also Sections 8.3--8.4 of \cite{Bahouri} for related results and Chapter 3 in \cite{Tao2006} for an overview.
	\begin{Theorem}\label{ThmLokWoh}
		Let $U^0 \in \dot H^1 \times L^2$. Then there exists a time $T_0>0$ and a unique solution $ U = (u,\partial_tu)$ of \eqref{NLW2} such that $U \in C([0,T_0], \dot H^1 \times L^2)$ and $u \in L^{p_\alpha}([0,T_0],L^{3(\alpha-1)})$.
		Moreover, $U$ is bounded in the above function spaces by a constant depending only on $\|U^0\|_{\dot H^1 \times L^2}$, and in the subcritical case $\alpha<5$, the time $T_0$ only depends on $\|U^0\|_{\dot H^1 \times L^2}$.
	\end{Theorem}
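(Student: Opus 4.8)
The plan is to run a Banach fixed-point argument for the Duhamel formulation (\ref{Duh1}) in a Strichartz space, in the classical Cazenave--Weissler style; the monographs cited after the statement carry this out, so I only indicate the structure and the place where the subcritical and critical cases part ways. Abbreviate $q_\alpha \coloneqq 3(\alpha-1)$, so that $(p_\alpha,q_\alpha,1)$ is wave admissible: the scaling condition and $1/p_\alpha+1/q_\alpha \le 1/2$ both follow at once from $1/p_\alpha+1/(\alpha-1)=1/2$, and $p_\alpha \ge 4 > 2$, $q_\alpha \in [6,12]\subset[2,\infty)$. For $T_0>0$ consider the complete metric space
\[ Z_{T_0} \coloneqq C([0,T_0],\dot H^1)\cap L^{p_\alpha}([0,T_0],L^{q_\alpha}) \]
and the affine map $\Phi(u)(t) \coloneqq S(t)(u^0,v^0)+\int_0^t|\nabla|^{-1}\sin((t-s)|\nabla|)\,g(u(s))\dd s$ from (\ref{Duh1}). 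Writing $\sin$ and $\cos$ through $e^{\pm\iu t|\nabla|}$ and using $|\nabla|^{-1}v^0 \in \dot H^1$, the energy estimate (the triple $(\infty,2,0)$) and the homogeneous estimate of Theorem \ref{ThmStrich} give $\|S(\cdot)(u^0,v^0)\|_{Z_{T_0}} \lesssim \|U^0\|_{\dot H^1\times L^2}=:C_0$; moreover $\|S(\cdot)(u^0,v^0)\|_{L^{p_\alpha}_\R L^{q_\alpha}}<\infty$, so that $\|S(\cdot)(u^0,v^0)\|_{L^{p_\alpha}_{T_0}L^{q_\alpha}}\to 0$ as $T_0\to 0$ by dominated convergence.

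The nonlinear estimate is the core. Using $|g(u)|\lesssim|u|^\alpha$, Hölder with $\tfrac12=\tfrac13+\tfrac16$, and the Sobolev embedding $\dot H^1\hookrightarrow L^6$, one gets $\|g(u(s))\|_{L^2}\lesssim\|u(s)\|_{L^{q_\alpha}}^{\alpha-1}\|u(s)\|_{\dot H^1}$; in particular $g(u)\in L^1_{T_0}L^2$ for $u\in Z_{T_0}$, since $\alpha-1\le p_\alpha$. Applying the inhomogeneous estimates of Theorem \ref{ThmStrich} to this forcing — with $\gamma=1$ for the $L^{p_\alpha}_{T_0}L^{q_\alpha}$-part of the Duhamel term and $\gamma=0$ for its $L^\infty_{T_0}\dot H^1$-part — yields
\[ \|\Phi(u)-S(\cdot)(u^0,v^0)\|_{Z_{T_0}}\lesssim\|u\|_{L^\infty_{T_0}\dot H^1}\int_0^{T_0}\|u(s)\|_{L^{q_\alpha}}^{\alpha-1}\dd s, \]
and, using the pointwise bound (\ref{NichtlinLip}) in the form $|g(u)-g(\tilde u)|\lesssim(|u|^{\alpha-1}+|\tilde u|^{\alpha-1})|u-\tilde u|$, a parallel Lipschitz estimate with $\|u-\tilde u\|_{Z_{T_0}}$ on the right and a matching power of the Strichartz norms. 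From the second line of (\ref{Duh}) one likewise reads off that $\partial_t u \in C([0,T_0],L^2)$ with norm $\lesssim C_0+\|g(u)\|_{L^1_{T_0}L^2}$.

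It remains to close the iteration. In the subcritical range $\alpha<5$ we have $\alpha-1<p_\alpha$, so Hölder in time gives $\int_0^{T_0}\|u(s)\|_{L^{q_\alpha}}^{\alpha-1}\dd s\le T_0^{\theta}\|u\|_{L^{p_\alpha}_{T_0}L^{q_\alpha}}^{\alpha-1}$ with $\theta=1-(\alpha-1)/p_\alpha>0$; then $\Phi$ maps the ball $\{\|u\|_{Z_{T_0}}\le 2C_1C_0\}$ into itself and is a contraction there as soon as $T_0$ is small depending only on $C_0$, which gives existence, the asserted bound, and $T_0=T_0(\|U^0\|_{\dot H^1\times L^2})$. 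In the critical case $\alpha=5$ one has $\alpha-1=p_\alpha=4$, so the time integral is exactly $\|u\|_{L^{p_\alpha}_{T_0}L^{q_\alpha}}^{\alpha-1}$ with no room for a Hölder gain; instead one fixes a universal small $\eta>0$ and chooses $T_0$ — now depending on $u^0,v^0$ themselves, through the dominated-convergence statement above — so that $\|S(\cdot)(u^0,v^0)\|_{L^{p_\alpha}_{T_0}L^{q_\alpha}}\le\eta$, and runs the fixed point in $\{\|u\|_{L^\infty_{T_0}\dot H^1}\le 2C_1C_0,\ \|u\|_{L^{p_\alpha}_{T_0}L^{q_\alpha}}\le 2\eta\}$, where the factor $\eta^{\alpha-1}$ in the nonlinear term supplies the needed smallness. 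Uniqueness in the full class $C([0,T_0],\dot H^1\times L^2)\cap\{u\in L^{p_\alpha}L^{q_\alpha}\}$ then follows from the same Lipschitz estimate by a standard connectedness argument in $t$.

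The step I expect to require care, rather than being routine, is the critical case: making precise that $\|S(\cdot)(u^0,v^0)\|_{L^{p_\alpha}_{T_0}L^{q_\alpha}}\to 0$ (via density of Schwartz functions with compact Fourier support away from the origin in $\dot H^1$, cf.\ Proposition 1.35 of \cite{Bahouri}, together with the global Strichartz estimate) and organizing the fixed-point ball so that only the Strichartz norm, not the energy norm, has to be small — this is exactly what forces $T_0$ to depend on the data and not merely on its norm. The remaining ingredients (the arithmetic of the admissible exponents, the contraction estimate via (\ref{NichtlinLip}), and the passage from $u$ to $U=(u,\partial_t u)$) are routine.
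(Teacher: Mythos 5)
Your proposal is correct and follows essentially the same route as the paper, which does not prove Theorem \ref{ThmLokWoh} itself but refers to Theorem IV.3.1 of \cite{Sogge} and Sections 8.3--8.4 of \cite{Bahouri} for exactly this Strichartz-based contraction argument (the paper reproduces its discrete analogue in Steps 1--3 of Proposition \ref{PropLokStab}). The only imprecision is calling the smallness threshold $\eta$ in the critical case ``universal'': as in the paper's choice of $\delta_0(R)$ in (\ref{DefDeltaInLokStab}), $\eta$ must be allowed to depend on $\|U^0\|_{\dot H^1\times L^2}$ so that the cubic-in-$\eta$ nonlinear contribution to the Strichartz norm of the iterates is absorbed, which does not affect the stated conclusion.
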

	
	\begin{Remark}\label{RemDef}
		a) Since $g(u) \in C([0,T_0],L^{6/\alpha}) \hookrightarrow C([0,T_0],H^{-1})$ and $\Delta u \in C([0,T_0],\dot H^{-1}) \hookrightarrow C([0,T_0], H^{-1})$, one can deduce from \eqref{Duh1} that $\partial_t^2 u$ belongs to $C([0,T_0], H^{-1})$ and that the differential equation in \eqref{NLW2} holds in this space. \smallskip
		
		b) See Proposition \ref{PropLokStab} for a choice of $T_0$ in the critical case $\alpha=5$. \smallskip
		
		c) In the defocusing case $\mu=1$, one can show global-in-time wellposedness using energy conservation. In the focusing case $\mu=-1$ however, blow-up in finite time can occur. Moreover, if $\alpha>5$, the problem is ill-posed at least if $\mu=-1$. See Chapters 3 and 5 in \cite{Tao2006}. \smallskip
		
		d) In the subcritical case $\alpha<5$, one has uniqueness of solutions $U = (u,\partial_tu)$ to \eqref{NLW2} in the energy class $C([0,T_0], \dot H^1 \times L^2)$ without the requirement that $u \in L^{p_\alpha}([0,T_0],L^{3(\alpha-1)})$, cf.\ \cite{PlanchonUniqueness}. 
	\end{Remark}
	
	From now on we assume the following. 
	
	\begin{Assumption}\label{Ass}
		There exists a time $T>0$ and a solution $U = (u,\partial_tu)$ of \eqref{NLW} such that
		\[U \in C([0,T], \dot H^1 \times L^2)\quad \text{and}\quad u \in L^{p_\alpha}([0,T],L^{3(\alpha-1)})\]
		with bound
		\begin{equation}\label{DefM}
		M \coloneqq \max\{\|U\|_{L^\infty_T(\dot H^1 \times L^2)}, \|u\|_{L^{p_\alpha}_TL^{3(\alpha-1)}} \}.
		\end{equation}
	\end{Assumption}
	
	\begin{Remark}\label{RemM}
		If $\alpha<5$, the quantity $M$ only depends on $\|U\|_{L^\infty_T(\dot H^1 \times L^2)}$. Indeed, the ``minimal'' existence time $T_0$ and the number $M$ for $T=T_0$ are controlled by $\|U^0\|_{\dot H^1 \times L^2}$ in Theorem \ref{ThmLokWoh}. Hence, we can divide the interval $[0,T]$ into a finite number of smaller subintervals such that the Strichartz norm of $u$ is bounded on each of them. Moreover, in the defocusing case $\mu=1$, one can use energy conservation to show that the number $M$ only depends on $\|U^0\|_{\dot H^1 \times L^2}$. The latter even holds in the critical case $\alpha=5$, see \cite{TaoKritBound} and the references therein.
	\end{Remark}
	
	As a preparation for later sections, we now want to convert the continuous-time Strichartz estimate into a discrete-time space-time bound for $u$ from Assumption \ref{Ass}. To apply the results of Section \ref{SecStrich}, we need to include the frequency cut-off $\pi_K$ defined by \eqref{DefPi}.

	\begin{Proposition}\label{PropDiskrStrichU}
		Let $u$, $T$, and $M$ be given by Assumption \ref{Ass} and $(p,q,1)$ be wave admissible. Then we have the estimates
		\begin{align*}
		\|\pi_K u(n\tau +\sigma)\|_{\ell^p_{\tau,N}L^q} &\lesssim_{p,q,M,T} (K\tau)^{\frac1p},  \\
		\|\pi_K u(n\tau +\sigma)\|_{\ell^2_{\tau,N}L^\infty} &\lesssim_{M,T} (K\tau +\log K)^{\frac12}
		\end{align*}
		for all $\tau \in (0,1]$, $K \ge \tau^{-1}$, $\sigma \ge 0$, and $N \in \N$ with $N\tau + \sigma \le T$.
	\end{Proposition}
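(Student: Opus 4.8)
The plan is to use the Duhamel formula \eqref{Duh1} to express $u(n\tau+\sigma)$ as a sum of a linear evolution of the data $(u^0,v^0)$ (in fact, of $(u(\sigma),\partial_t u(\sigma))$ to absorb the shift $\sigma$) plus a Duhamel integral of the nonlinearity $g(u)$, and then apply the discrete-time estimates from Section \ref{SecStrich} to each piece. More precisely, I would first note that by evolving from time $\sigma$ rather than $0$ we may write, for $n\tau+\sigma\le T$,
\[
u(n\tau+\sigma) = S(n\tau)(u(\sigma),\partial_t u(\sigma)) + \int_\sigma^{n\tau+\sigma} |\nabla|^{-1}\sin((n\tau+\sigma-s)|\nabla|)\, g(u(s))\dd s,
\]
so that after applying $\pi_K$ and decomposing $\sin$ and $\cos$ into half-wave groups via the identities recalled after \eqref{Duh1lin}, the linear part has the form $\pi_K e^{\pm\iu n\tau|\nabla|}w$ with $\|w\|_{\dot H^1}\lesssim M$, and the inhomogeneous part is (up to the shift $\sigma/\tau$ inside the propagator and constants) a hybrid Duhamel term of the type treated in Corollary \ref{KorHybStrich}, with right-hand side $|\nabla|^{-1}g(u)\in L^1_T\dot H^1$ since $\||\nabla|^{-1}g(u(s))\|_{\dot H^1}=\|g(u(s))\|_{L^2}$.

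For the first estimate I would apply \eqref{DisStrich1} (with the minus-sign variant from Remark \ref{RemDisStr}b)) to the linear part, giving the bound $(K\tau)^{1/p}\|w\|_{\dot H^1}\lesssim_{M}(K\tau)^{1/p}$, and apply the hybrid inhomogeneous estimate \eqref{Hyb1} to the Duhamel part, giving $(K\tau)^{1/p}\|g(u)\|_{L^1_T L^2}$; the shift $\sigma$ inside the propagator is harmless because one can always absorb a fixed phase $e^{\iu\sigma|\nabla|}$ into the function being propagated, exactly as in Corollary \ref{KorInhomCK}. For the second estimate I proceed identically but use the endpoint estimates \eqref{Endp1} and \eqref{Hyb2} from Theorem \ref{ThmEndp} and Corollary \ref{KorHybStrich}, which produce the factor $\sqrt{K\tau+\log(1+KN\tau)}\lesssim_T\sqrt{K\tau+\log K}$ (using $N\tau\le T$ and $K\ge 1$). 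It therefore remains only to bound $\|g(u)\|_{L^1_T L^2}=\||u|^\alpha\|_{L^1_T L^2}=\|u\|_{L^\alpha_T L^{2\alpha}}^\alpha$ by a constant depending on $M$ and $T$.

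The one genuine point requiring care — though it is routine interpolation — is this last nonlinear bound. One checks that the pair $(\alpha, 2\alpha)$ of exponents in space-time lies "between" the energy pair, which controls $u$ in $L^\infty_T\dot H^1\hookrightarrow L^\infty_T L^6$, and the Strichartz pair $(p_\alpha,3(\alpha-1))$ built into Assumption \ref{Ass}. Indeed $2\alpha\in[6,3(\alpha-1)]$ for $\alpha\in[3,5]$, so Hölder in space interpolates $L^{2\alpha}$ between $L^6$ and $L^{3(\alpha-1)}$, and then Hölder in time (using $\alpha\le p_\alpha$, which holds since $p_\alpha\ge 4\ge\alpha$ fails only — wait: one must verify $\alpha\theta\le p_\alpha$ for the relevant exponent $\theta$, which does hold after the space interpolation because the time-integrability demanded is strictly less than $p_\alpha$) bounds $\|u\|_{L^\alpha_T L^{2\alpha}}$ by $\|u\|_{L^\infty_T L^6}^{1-\theta}\|u\|_{L^{p_\alpha}_T L^{3(\alpha-1)}}^{\theta}\lesssim_T M$, whence $\|g(u)\|_{L^1_T L^2}\lesssim_{M,T} 1$. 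Since the linear term only sees $M$ and the shift adds nothing, this gives the stated dependence of the implicit constants on $p,q,M,T$ (and only on $M,T$ for the endpoint bound). I do not expect any obstacle here beyond bookkeeping the interpolation exponents; the substantive work has already been done in constructing the discrete and hybrid Strichartz estimates of Section \ref{SecStrich}.
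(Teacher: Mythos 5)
Your proposal is correct and follows essentially the same route as the paper: Duhamel from time $\sigma$, the homogeneous estimate (\ref{DisStrich1}) for the linear part and the hybrid estimate (\ref{Hyb1}) (resp.\ (\ref{Endp1}) and (\ref{Hyb2}) for the endpoint) for the inhomogeneous part, and a bound on $\|g(u)\|_{L^1_TL^2}$ via the norms in Assumption \ref{Ass}. The only cosmetic difference is that you interpolate $L^{2\alpha}$ between $L^6$ and $L^{3(\alpha-1)}$, whereas the paper splits $\||u|^{\alpha-1}u\|_{L^2}\le\||u|^{\alpha-1}\|_{L^3}\|u\|_{L^6}$ directly; both reduce to the same use of $\|u\|_{L^{p_\alpha}_TL^{3(\alpha-1)}}\le M$ (with $\alpha-1\le p_\alpha$) and $\dot H^1\hookrightarrow L^6$.
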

	\begin{proof}
		Owing to \eqref{Duh1} and \eqref{Duh1lin}, the solution is given by
		\begin{align*}
		&\pi_K u(n\tau +\sigma) \\
		&= \pi_KS(n\tau)(u(\sigma),\partial_t u(\sigma)) +\int_0^{n\tau}\pi_K  |\nabla|^{-1} \sin((n\tau-s)|\nabla|)g(u(\sigma+s)) \dd s .
		\end{align*}
		We apply the homogeneous and hybrid Strichartz estimates \eqref{DisStrich1} and \eqref{Hyb1} to obtain
		\begin{align*}
		\|\pi_K &u(n\tau +\sigma)\|_{\ell^p_{\tau,N}L^q} \\ &\lesssim_{p,q} (K\tau)^{\frac1p} \Big(\|u(\sigma)\|_{\dot H^1} + \|\partial_t u(\sigma)\|_{L^2} + \|g(u(\sigma+\cdot))\|_{L^1_{T-\sigma}L^2}\Big)\\ &\lesssim (K\tau)^{\frac1p} \Big( M+\||u|^{\alpha-1}u\|_{L^1_TL^2}\Big)\\
		&\le (K\tau)^{\frac1p}\Big(M + \||u|^{\alpha-1}\|_{L^1_TL^3}\|u\|_{L^\infty_TL^6}\Big) \\ &\lesssim
		(K\tau)^{\frac1p}\Big(M + T^{2-\frac{\alpha-1}{2}} \|u\|^{\alpha-1}_{L^{p_\alpha}_TL^{3(\alpha-1)}} M\Big) \lesssim_{M,T} (K\tau)^{\frac1p},
		\end{align*}
		where we also use Hölder's inequality and the Sobolev embedding $\dot H^1 \hookrightarrow L^6$ to bound the nonlinearity. The estimate in the $\ell^2L^\infty$-norm follows in the same way, using the logarithmic endpoint estimates \eqref{Endp1} and \eqref{Hyb2}.
	\end{proof}
	
	\begin{Remark}\label{RemStrichU}
		With a similar calculation we can also show the continuous-time bound
		\[\|\pi_K u\|_{L^p_TL^q} \lesssim_{p,q,M,T} 1. \]
		Here we use the continuous-time Strichartz estimate from Theorem \ref{ThmStrich} and the uniform boundedness of the operator $\pi_K$ in all $L^2$-based Sobolev spaces. 
	\end{Remark}
	
	\section{Lie splitting}\label{SecLie}
	We consider a semi-discretization in time for \eqref{NLW}, i.e., for a stepsize $\tau >0$ and $n \in \N$ we want to compute approximations $U_n = (u_n,v_n) \approx U(t_n)$ at discrete time points $t_n \coloneqq n\tau$. One can consider the following Lie Splitting scheme (sometimes also called Lawson--Euler scheme)
	\begin{equation*}
	U_{n+1} = e^{\tau A}[U_n + \tau G(U_n)]
	\end{equation*}
	with $A$ and $G$ from \eqref{Defs}.
	In order to perform one step of the Lie splitting scheme, we first apply one step of the exact nonlinear flow, followed by one step of the exact linear flow. One can also derive the Lie Splitting by setting $s=0$ in the integral of the Duhamel formula
	\begin{equation*}\label{Duhtau}
	U(\tau) = e^{\tau A }U^0+\int_0^\tau e^{(\tau-s)A}G(U(s)) \dd s.
	\end{equation*}
	To compensate the ``bad'' behavior of the nonlinearity $G$ in the case $\alpha>3$, we analyze a modified scheme that includes the frequency cut-off $\pi_K$ defined in \eqref{DefPi}, where we choose $K = \tau^{-1}$. (One could also take $K = c\tau^{-1}$ for some fixed $c>0$.) Thanks to this choice, the factor $(K\tau)^{1/p}$ in the discrete Strichartz estimates vanishes. 
	We introduce the notation
	\[ \Pi_{\tau^{-1}} \coloneqq \begin{pmatrix}\pi_{\tau^{-1}} & 0 \\ 0 & \pi_{\tau^{-1}} \end{pmatrix}. \]  
	The modified scheme is given by
	\begin{equation}\label{LieFilt}\begin{aligned}
	U_{n+1} &= \Phi_{\tau}(U_n) \coloneqq e^{\tau A}[U_n + \tau\Pi_{\tau^{-1}} G(U_n)],\\
	U_0 &= \Pi_{\tau^{-1}} U^0.
	\end{aligned}
	\end{equation}
	The filter is applied to the initial data, as well as in the iteration after the application of the nonlinearity. Inductively, one checks that $\Pi_{\tau^{-1}} U_n = U_n$ for all $n$. Let the solution $U=(u,\partial_tu)$ be given by Assumption \ref{Ass}. In Lemma \ref{LemPi} we control the projection error $U(t_n) - \Pi_{\tau^{-1}}U(t_n)$. Hence, to compare the numerical approximation with $U$, we define the (main) error
	\begin{equation}\label{DefMainError}
	E_n \coloneqq \Pi_{\tau^{-1}} U(t_n) - U_n
	\end{equation}	
	for all $n \in \N_0$ with $n\tau = t_n \le T$. Note that with this definition we have $E_0 = 0$, since the numerical scheme filters the initial data. We write $u_n$ for the first component of $U_n$, as well as $e_n$ for the first component of $E_n$. We first establish a recursion formula for the error.
	
	\begin{Proposition}\label{PropFehlerRek}
		Let $E_n$ be given by \eqref{DefMainError} and \eqref{LieFilt} for the solution $U$ from Assumption \ref{Ass}. We then have
		\begin{align}\label{FehlerRek}
		E_{m+n} &= e^{n\tau A}E_m + \int_0^{t_n}\Pi_{\tau^{-1}} e^{(n\tau-s)A}B_m(s) \dd s \nonumber\\
		&\quad +  \sum_{k=0}^{n-1}\Pi_{\tau^{-1}} e^{(n-k)\tau A}\Big(\Delta_{m+k} + \tau Q_{m+k}\Big) 
		\end{align}
		for all $\tau \in (0,1]$, and $n, m \in \N_0$ with $t_{m+n} \le T$. The appearing terms are given by 
		\begin{align}\label{DefTerme}
		B_m(s) &\coloneqq G(U(t_m+s))-G(\Pi_{\tau^{-1}} U(t_m+s)),\nonumber\\
		\Delta_n &\coloneqq \int_0^\tau e^{-sA} G(\Pi_{\tau^{-1}} U(t_n+s)) \dd s -\tau G(\Pi_{\tau^{-1}} U(t_n)),\\
		Q_n &\coloneqq G(\Pi_{\tau^{-1}} U(t_n))-G(U_n). \nonumber
		\end{align}
	\end{Proposition}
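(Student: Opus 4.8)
The plan is to derive the recursion \eqref{FehlerRek} by iterating a one-step error identity. First I would establish the one-step version, i.e., the case $n=1$. Starting from the exact Duhamel formula over the interval $[t_m, t_{m+1}]$,
\[ U(t_{m+1}) = e^{\tau A} U(t_m) + \int_0^\tau e^{(\tau-s)A} G(U(t_m+s)) \dd s, \]
I apply $\Pi_{\tau^{-1}}$ to both sides. Since $\Pi_{\tau^{-1}}$ commutes with $e^{\tau A}$ (both are Fourier multipliers), the linear term becomes $e^{\tau A}\Pi_{\tau^{-1}} U(t_m)$. For the numerical scheme I have $U_{m+1} = e^{\tau A}[U_m + \tau \Pi_{\tau^{-1}} G(U_m)]$ and $\Pi_{\tau^{-1}} U_m = U_m$. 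Subtracting, and recalling $E_n = \Pi_{\tau^{-1}} U(t_n) - U_n$, gives
\[ E_{m+1} = e^{\tau A} E_m + \Pi_{\tau^{-1}} \int_0^\tau e^{(\tau-s)A}\big(G(U(t_m+s)) - G(\Pi_{\tau^{-1}} U(t_m+s))\big) \dd s + \Pi_{\tau^{-1}} e^{\tau A}\big(\Delta_m + \tau Q_m\big), \]
where I have inserted $0 = G(\Pi_{\tau^{-1}} U(t_m+s)) - G(\Pi_{\tau^{-1}} U(t_m+s))$ to split off $B_m(s)$, then recognized that $\int_0^\tau e^{(\tau-s)A} G(\Pi_{\tau^{-1}} U(t_m+s))\dd s - \tau e^{\tau A} G(\Pi_{\tau^{-1}} U(t_m))$ equals $e^{\tau A}\Delta_m$ after pulling out $e^{\tau A}$ and substituting $s \mapsto \tau - s$ in the definition of $\Delta_m$; the remaining piece $\tau e^{\tau A}\big(G(\Pi_{\tau^{-1}} U(t_m)) - G(U_m)\big) = \tau e^{\tau A} Q_m$. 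One must be careful that $\Pi_{\tau^{-1}}$ can be freely inserted in front of the $\Delta_m$ and $Q_m$ terms since $\Pi_{\tau^{-1}} U_n = U_n$ and $\Pi_{\tau^{-1}}^2 = \Pi_{\tau^{-1}}$, and commutes with $e^{sA}$; this is the one bookkeeping point to get right.

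Next I would iterate this one-step identity $n$ times, or equivalently prove \eqref{FehlerRek} by induction on $n$ for fixed $m$. The base case $n=0$ is trivial, and the base case $n=1$ is the identity just derived. For the induction step, I apply the one-step identity with $m$ replaced by $m+n$ to express $E_{m+n+1}$ in terms of $E_{m+n}$, substitute the inductive formula for $E_{m+n}$, and use the group property $e^{\tau A} e^{n\tau A} = e^{(n+1)\tau A}$ together with the commutation of $e^{\tau A}$ with $\Pi_{\tau^{-1}}$ to absorb the new terms into the sum and the integral. The integral term telescopes correctly because $e^{\tau A}\int_0^{t_n}\Pi_{\tau^{-1}}e^{(n\tau - s)A}B_m(s)\dd s = \int_0^{t_n}\Pi_{\tau^{-1}}e^{((n+1)\tau - s)A}B_m(s)\dd s$, and the freshly produced $\Pi_{\tau^{-1}}\int_0^\tau e^{(\tau - s)A}B_{m+n}(s)\dd s = \int_{t_n}^{t_{n+1}}\Pi_{\tau^{-1}} e^{((n+1)\tau - s)A} B_m(s)\dd s$ after the shift $s \mapsto s - t_n$ and using $B_{m+n}(s - t_n) = B_m(s)$... wait, more precisely $B_{m+n}(\sigma) = G(U(t_{m+n}+\sigma)) - G(\Pi_{\tau^{-1}}U(t_{m+n}+\sigma))$, so with $\sigma = s - t_n$ this is the integrand at time $t_m + s$, matching $B_m(s)$. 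Thus the two integral pieces combine into $\int_0^{t_{n+1}}\Pi_{\tau^{-1}} e^{((n+1)\tau - s)A} B_m(s)\dd s$, completing the step.

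The argument is essentially bookkeeping: the only genuinely substantive points are (i) that all the operators involved — $e^{sA}$, $\Pi_{\tau^{-1}}$ — commute, which follows from their being Fourier multipliers, and (ii) correctly identifying the combination of the exact Duhamel contribution over one step with the scheme's quadrature $\tau\Pi_{\tau^{-1}} G(U_n)$ as producing exactly the quadrature-error term $\Delta_n$ plus the nonlinearity-evaluation-error term $Q_n$. I expect no real obstacle here; the main risk is a sign or index slip in the telescoping, and in keeping track of which $\Pi_{\tau^{-1}}$-insertions are legitimate (they all are, thanks to $\Pi_{\tau^{-1}} U_n = U_n$ and idempotency). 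I would also note that the integrals and the group $e^{sA}$ act on $\dot H^1 \times L^2$-valued functions, and $G(U(t))$ lies in $\{0\}\times L^{6/\alpha} \hookrightarrow \{0\}\times H^{-1}$ by Remark \ref{RemDef}, so all the Bochner integrals are well defined in $\dot H^1 \times \dot H^{-1}$ (or a suitable ambient space), which is enough for the identity to make sense; the actual finite-energy estimates are deferred to later sections.
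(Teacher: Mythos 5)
Your proof is correct and follows essentially the same route as the paper: both rest on the continuous and discrete Duhamel formulas and the same splitting of the nonlinear contribution into $B_m$, $\Delta_n$, and $Q_n$; the paper merely performs the induction once on the discrete Duhamel formula and then subtracts over the whole interval, whereas you perform it directly on the one-step error identity. (One cosmetic remark: no substitution $s\mapsto\tau-s$ is needed to identify $e^{\tau A}\Delta_m$ — simply factor $e^{(\tau-s)A}=e^{\tau A}e^{-sA}$ — and the identity you state is the correct one.)
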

	\begin{proof}
		We use the Duhamel formula 
		\begin{equation*}
		U(t_{m+n}) = e^{n\tau A} U(t_m) + \int_0^{t_n} e^{(n\tau-s)A}G(U(t_m+s)) \dd s
		\end{equation*}
		for the solution $U$. For the discrete approximation $U_n$ defined by \eqref{LieFilt} we have the discrete Duhamel formula
		\begin{equation}\label{DiskrDuhamelMitM}
		U_{m+n} = e^{n\tau A}U_m + \tau \sum_{k=0}^{n-1}\Pi_{\tau^{-1}} e^{(n-k)\tau A} G(U_{m+k}),
		\end{equation}
		which is easily verified via induction. These equalities yields
		\begin{align*}
		E_{m+n} &=\Pi_{\tau^{-1}} U(t_{m+n}) - U_{m+n}\\
		&=  e^{n\tau A} (\Pi_{\tau^{-1}} U(t_m)-U_m) + \int_0^{t_n} \Pi_{\tau^{-1}} e^{(n\tau-s)A}G(U(t_m+s)) \dd s\\
		&\quad - \tau \sum_{k=0}^{n-1}\Pi_{\tau^{-1}} e^{(n-k)\tau A} G(U_{m+k})\\
		&= e^{n\tau A} E_m + \int_0^{t_n} \Pi_{\tau^{-1}} e^{(n\tau-s)A} B_m(s) \dd s \\
		&\quad + \int_0^{t_n}\Pi_{\tau^{-1}} e^{(n\tau-s)A}G(\Pi_{\tau^{-1}} U(t_m+s)) \dd s\\
		&\quad - \tau \sum_{k=0}^{n-1}\Pi_{\tau^{-1}} e^{(n-k)\tau A} G(\Pi_{\tau^{-1}} U(t_{m+k})) + \tau \sum_{k=0}^{n-1}\Pi_{\tau^{-1}} e^{(n-k)\tau A} Q_{m+k}.
		\end{align*}
		Furthermore, we can rewrite the last integral as
		\begin{align*}
		\int_0^{t_n}\Pi_{\tau^{-1}} &e^{(n\tau-s)A}G(\Pi_{\tau^{-1}} U(t_m+s))\dd s \\
		&= \sum_{k=0}^{n-1}\int_{t_k}^{t_{k+1}}\Pi_{\tau^{-1}} e^{(n\tau-s)A}G(\Pi_{\tau^{-1}} U(t_m+s))\dd s \\
		&= \sum_{k=0}^{n-1}\Pi_{\tau^{-1}} e^{(n-k)\tau A}\int_{0}^{\tau}e^{-sA}G(\Pi_{\tau^{-1}} U(t_{m+k}+s))\dd s
		\end{align*}
		to obtain the expression including the local error terms $\Delta_{m+k}$.
	\end{proof}
	
	We now quantify convergence $\pi_K \to I$ as $K \to \infty$. In view of later applications, the following two results are formulated for general $K \ge 1$. In this section, we will only use the case $K = \tau^{-1}$. We give the proof of the next (known) lemma for convenience.
	\begin{Lemma}\label{LemPi}
		The estimate
		\begin{align*}
		\|(I-\pi_K)f\|_{\dot H^{\gamma-1}} &\le \frac1K\|f\|_{\dot H^{\gamma}} 
		\end{align*}
		is true for all $K \ge 1$, $\gamma \in \R$, and $f \in \dot H^\gamma$. 
	\end{Lemma}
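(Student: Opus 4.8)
The plan is to pass to the Fourier side and exploit that the multiplier of $I-\pi_K$ is supported where $|\xi|\ge K$. First I would note that for $f\in\dot H^\gamma$ the function $(I-\pi_K)f=\mathcal F^{-1}\big(\mathbbm1_{\{|\xi|\ge K\}}\hat f\big)$ again has locally integrable Fourier transform, so its homogeneous Sobolev norms are given by the usual weighted $L^2$-integrals. Then, on the set $\{|\xi|\ge K\}$ one has the pointwise bound $|\xi|^{-2}\le K^{-2}$, so that
\[
\|(I-\pi_K)f\|_{\dot H^{\gamma-1}}^2
=\int_{\{|\xi|\ge K\}}|\xi|^{2(\gamma-1)}|\hat f(\xi)|^2\dd\xi
=\int_{\{|\xi|\ge K\}}|\xi|^{-2}\,|\xi|^{2\gamma}|\hat f(\xi)|^2\dd\xi
\le\frac1{K^2}\int_{\R^3}|\xi|^{2\gamma}|\hat f(\xi)|^2\dd\xi
=\frac1{K^2}\|f\|_{\dot H^\gamma}^2 .
\]
Taking square roots gives the claim. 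The hypothesis $K\ge1$ is in fact not needed for this chain of inequalities, but it is harmless and keeps the statement uniform with the rest of the section.

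There is essentially no obstacle here; the only point requiring a word of care is the membership/measurability issue, i.e.\ that cutting off $\hat f$ outside a ball preserves the property $\hat w\in L^1_{\mathrm{loc}}$ that enters the definition of $\dot H^s$, which is immediate since $\mathbbm1_{\{|\xi|\ge K\}}$ is bounded and measurable. Thus the whole argument is a one-line Plancherel computation, and I would present it exactly as above.
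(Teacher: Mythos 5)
Your proof is correct and is essentially identical to the paper's: both pass to the Fourier side via Plancherel and use the pointwise bound $|\xi|^{-1}\le K^{-1}$ on the support $\{|\xi|\ge K\}$ of the multiplier of $I-\pi_K$. Your additional remarks (that $K\ge 1$ is not needed and that the cut-off preserves $\hat w\in L^1_{\mathrm{loc}}$) are accurate but inessential.
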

	\begin{proof}
		We simply use the support property of the Fourier multiplier $\pi_K$, Plancherel's theorem and the definition of the Sobolev norm. This gives
		\begin{align*}
		\|(I-\pi_K )f\|_{\dot H^{\gamma-1}} &= \||\xi|^{\gamma-1}(1-\mathbbm{1}_{B(0,K)})\hat f\|_{L^2} \le \Big\|\frac{\mathbbm{1}_{\{|\xi|\ge K\}}}{|\xi|}\Big\|_{L^\infty}\||\xi|^\gamma \hat f\|_{L^2} \\
		&\le \frac1K\|f\|_{\dot H^{\gamma}}. \qedhere
		\end{align*} 
	\end{proof}
	Using this, we show an estimate for the terms $B_n(s)$ in \eqref{DefTerme}.
	\begin{Lemma}\label{LemB}
		Let $u$, $T$, and $M$ be given by Assumption \ref{Ass}. We then have the inequality
		\[\|g(u)-g(\pi_K u) \|_{L^1_T \dot H^{-1}} \lesssim_{M,T} \frac1K \]
		for all $K \ge 1$.
	\end{Lemma}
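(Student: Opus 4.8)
The plan is to reduce the $\dot H^{-1}$-norm to an $L^{6/5}$-norm and then exploit the pointwise Lipschitz estimate (\ref{NichtlinLip}). Since $\dot H^1 \hookrightarrow L^6$ on $\R^3$, duality yields the embedding $L^{6/5} \hookrightarrow \dot H^{-1}$, so it is enough to bound $\|g(u(t)) - g(\pi_K u(t))\|_{L^{6/5}}$ for a.e.\ $t \in [0,T]$ and afterwards integrate over $[0,T]$. Applying (\ref{NichtlinLip}) pointwise with $v = u(t)$ and $w = \pi_K u(t)$, and abbreviating the complementary projection by $I - \pi_K$, we obtain
\[ |g(u) - g(\pi_K u)| \lesssim \bigl( |u|^{\alpha-1} + |\pi_K u|^{\alpha-1} \bigr)\, |(I-\pi_K) u|. \]

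Next I would apply Hölder's inequality in space with the exponents $3$ and $2$ (noting $\tfrac56 = \tfrac13 + \tfrac12$), which gives
\[ \|g(u) - g(\pi_K u)\|_{L^{6/5}} \lesssim \bigl( \|u\|_{L^{3(\alpha-1)}}^{\alpha-1} + \|\pi_K u\|_{L^{3(\alpha-1)}}^{\alpha-1} \bigr)\, \|(I-\pi_K) u\|_{L^2}. \]
The decaying factor is produced by Lemma \ref{LemPi} with $\gamma = 1$, namely $\|(I - \pi_K) u\|_{L^2} \le K^{-1} \|u\|_{\dot H^1} \le K^{-1} M$. It then remains to integrate the bracketed factor in time.

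Since the triple $(p_\alpha, 3(\alpha-1), 1)$ is wave admissible we have $p_\alpha \in [4,\infty]$, hence $p_\alpha \ge 4 \ge \alpha - 1$, and Hölder's inequality in time yields
\[ \int_0^T \bigl( \|u(t)\|_{L^{3(\alpha-1)}}^{\alpha-1} + \|\pi_K u(t)\|_{L^{3(\alpha-1)}}^{\alpha-1} \bigr) \dd t \lesssim_T \|u\|_{L^{p_\alpha}_T L^{3(\alpha-1)}}^{\alpha-1} + \|\pi_K u\|_{L^{p_\alpha}_T L^{3(\alpha-1)}}^{\alpha-1}, \]
and the right-hand side is $\lesssim_{M,T} 1$ by Assumption \ref{Ass} together with the continuous-time Strichartz bound of Remark \ref{RemStrichU}. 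Combining the three displayed estimates gives the claim. The one delicate point — and the reason Remark \ref{RemStrichU} is invoked rather than an elementary argument — is the uniform-in-$K$ control of $\|\pi_K u\|_{L^{p_\alpha}_T L^{3(\alpha-1)}}$: the sharp frequency cut-off $\pi_K$ is not bounded on $L^{3(\alpha-1)}$, and for $\alpha > 3$ one cannot absorb $(I - \pi_K) u$ into $L^{3(\alpha-1)}$ using only the $\dot H^1$-regularity of $u$, since this would require the embedding $\dot H^{3/2 - 1/(\alpha-1)} \hookrightarrow L^{3(\alpha-1)}$ with $3/2 - 1/(\alpha-1) > 1$.
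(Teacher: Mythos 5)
Your proof is correct and follows essentially the same route as the paper's: the embedding $L^{6/5}\hookrightarrow \dot H^{-1}$, the pointwise Lipschitz bound (\ref{NichtlinLip}), Hölder in space with exponents $3$ and $2$, Lemma \ref{LemPi} for the factor $K^{-1}$, Hölder in time from $L^{\alpha-1}_T$ to $L^{p_\alpha}_T$, and Remark \ref{RemStrichU} for the $\pi_K u$ term. Your closing observation about why the Strichartz bound of Remark \ref{RemStrichU} (rather than $L^q$-boundedness of the sharp cut-off) is needed for $\alpha>3$ is accurate and matches the paper's implicit reasoning.
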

	\begin{proof}
		We use estimate \eqref{NichtlinLip}, the Sobolev embedding $L^{6/5}\hookrightarrow\dot H^{-1}  $, Hölder's inequality, the definition \eqref{DefM} of $M$, Remark \ref{RemStrichU}, and Lemma \ref{LemPi} to obtain
		\begin{align*}
		\|&g(u)-g(\pi_K u) \|_{L^1_T \dot H^{-1}} \\
		&\lesssim \|(|u|^{\alpha-1}+|\pi_K u|^{\alpha-1})|u-\pi_K u|\|_{L^1_T L^{6/5}} \\
		&\le(\|u\|^{\alpha-1}_{L_T^{\alpha-1}L^{3(\alpha-1)}}+\|\pi_K u\|^{\alpha-1}_{L_T^{\alpha-1}L^{3(\alpha-1)}})\|u-\pi_K u\|_{L^\infty_T L^2}\\
		&\lesssim_{M,T} \frac1K \|u\|^{\alpha-1}_{L_T^{p_\alpha}L^{3(\alpha-1)}}\|u\|_{L^\infty_T \dot H^1} \lesssim_M \frac1K. \qedhere
		\end{align*}
	\end{proof}
	
	We next bound the local error terms $\Delta_n$ from \eqref{DefTerme}. To differentiate $g$, we identify $\C$ with $\R^2$ using the real scalar product $z \cdot w = \Re(z\bar w)$, where we omit the dot below.
	
	\begin{Lemma}\label{LemLokF}
		Let $U = (u,\partial_tu)$, $T$, and $M$ be given by Assumption \ref{Ass}. We then have the representation
		\begin{equation}\label{ErrRepres}
		\Delta_n = \int_0^\tau \int_0^s e^{-\sigma A}\begin{pmatrix}-g(\pi_{\tau^{-1}} u(t_n+\sigma)) \\ g'(\pi_{\tau^{-1}} u(t_n+\sigma))\pi_{\tau^{-1}} \partial_t u(t_n+\sigma)\end{pmatrix} \dd \sigma\dd s. 
		\end{equation}
		Moreover, the inequality
		\[\|\Delta_n\|_{\ell^1_{\tau,N}(L^2\times \dot H^{-1})} \lesssim_{M,T} \tau^2 \]
		holds for all $\tau \in (0,1]$ and $N \in \N_0$ with $(N+1)\tau \le T$.
	\end{Lemma}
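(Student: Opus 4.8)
The plan is to derive the representation (\ref{ErrRepres}) by differentiating under the integral sign, and then to estimate $\Delta_n$ by feeding (\ref{ErrRepres}) into the Strichartz bounds of Section~\ref{SecStrich}. For the representation I would fix $n$ and consider on $[0,\tau]$ the curve $\phi(\sigma)\coloneqq e^{-\sigma A}G(\Pi_{\tau^{-1}}U(t_n+\sigma))$. Since $\tau\phi(0)=\int_0^\tau\phi(0)\dd s$, the definition of $\Delta_n$ in (\ref{DefTerme}) gives $\Delta_n=\int_0^\tau(\phi(s)-\phi(0))\dd s=\int_0^\tau\int_0^s\phi'(\sigma)\dd\sigma\dd s$ by the fundamental theorem of calculus, so it suffices to compute $\phi'$. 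By the product rule, $\phi'(\sigma)=e^{-\sigma A}(-A\,G(\Pi_{\tau^{-1}}U(t_n+\sigma))+\partial_\sigma G(\Pi_{\tau^{-1}}U(t_n+\sigma)))$. The block form of $A$ in (\ref{Defs}) yields $A(0,g(w))=(g(w),0)$, and since $\partial_\sigma\pi_{\tau^{-1}}u(t_n+\sigma)=\pi_{\tau^{-1}}\partial_tu(t_n+\sigma)$ the chain rule (differentiating $g$ in the real-linear sense fixed before the lemma) gives $\partial_\sigma g(\pi_{\tau^{-1}}u(t_n+\sigma))=g'(\pi_{\tau^{-1}}u(t_n+\sigma))\,\pi_{\tau^{-1}}\partial_tu(t_n+\sigma)$. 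Combining these two pieces produces precisely the integrand in (\ref{ErrRepres}). I expect the single genuinely delicate (but routine) point of the lemma to be the regularity justification for this step: one checks that $\phi$ is $C^1$ with values in a suitable function space, using that $\pi_{\tau^{-1}}$ localizes frequencies below $\tau^{-1}$ (so that, via Bernstein's inequality, $\dot H^1\hookrightarrow L^6$, $u\in C([0,T],\dot H^1)$ and $\partial_tu\in C([0,T],L^2)$, the curves $\pi_{\tau^{-1}}u(t_n+\cdot)$ and $\pi_{\tau^{-1}}\partial_tu(t_n+\cdot)$ are well behaved), that $g\in C^1$ with $|g'(z)|\lesssim|z|^{\alpha-1}$, and that $e^{-\sigma A}$ is a $C_0$-group.

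For the estimate I would first record that $e^{-\sigma A}$ is bounded on $L^2\times\dot H^{-1}$ uniformly in $\sigma$: reading off (\ref{WaveGroup}), each of the four entries is a Fourier multiplier that maps the relevant homogeneous Sobolev space into the next one uniformly in $\sigma$, so $\|e^{-\sigma A}(f,h)\|_{L^2\times\dot H^{-1}}\lesssim\|f\|_{L^2}+\|h\|_{\dot H^{-1}}$. Applying this in (\ref{ErrRepres}), then the Sobolev embeddings $\dot H^1\hookrightarrow L^6$ and $L^{6/5}\hookrightarrow\dot H^{-1}$, Hölder's inequality (splitting $|g(w)|=|w|^{\alpha-1}|w|$ with $\tfrac13+\tfrac16=\tfrac12$ for the first component, and bounding $|g'(w)h|\lesssim|w|^{\alpha-1}|h|$ and using $\tfrac13+\tfrac12=\tfrac56$ for the second), together with the uniform bounds $\|\pi_{\tau^{-1}}u(t)\|_{\dot H^1}\le M$ and $\|\pi_{\tau^{-1}}\partial_tu(t)\|_{L^2}\le M$, I would bound both components of the vector in (\ref{ErrRepres}) by $\lesssim_M\|\pi_{\tau^{-1}}u(t_n+\sigma)\|_{L^{3(\alpha-1)}}^{\alpha-1}$. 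Estimating $\int_0^\tau\int_0^s(\cdots)\dd\sigma\dd s\le\tau\int_0^\tau(\cdots)\dd\sigma$ then gives $\|\Delta_n\|_{L^2\times\dot H^{-1}}\lesssim_M\tau\int_0^\tau\|\pi_{\tau^{-1}}u(t_n+\sigma)\|_{L^{3(\alpha-1)}}^{\alpha-1}\dd\sigma$.

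Finally I would multiply by $\tau$, sum over $n=0,\dots,N$, and use the elementary identity $\sum_{n=0}^N\int_0^\tau f(t_n+\sigma)\dd\sigma=\int_0^{(N+1)\tau}f(t)\dd t$ to obtain, using $(N+1)\tau\le T$,
\[\|\Delta_n\|_{\ell^1_{\tau,N}(L^2\times\dot H^{-1})}=\tau\sum_{n=0}^N\|\Delta_n\|_{L^2\times\dot H^{-1}}\lesssim_M\tau^2\int_0^{(N+1)\tau}\|\pi_{\tau^{-1}}u(t)\|_{L^{3(\alpha-1)}}^{\alpha-1}\dd t\le\tau^2\|\pi_{\tau^{-1}}u\|_{L^{\alpha-1}_TL^{3(\alpha-1)}}^{\alpha-1}.\]
Since $\alpha-1\le p_\alpha$ for $\alpha\in[3,5]$, Hölder in time on $[0,T]$ bounds the last factor by $T^{(\alpha-1)(\frac1{\alpha-1}-\frac1{p_\alpha})}\|\pi_{\tau^{-1}}u\|_{L^{p_\alpha}_TL^{3(\alpha-1)}}^{\alpha-1}$, which is $\lesssim_{M,T}1$ by the continuous-time Strichartz estimate of Remark~\ref{RemStrichU} for the wave-admissible triple $(p_\alpha,3(\alpha-1),1)$. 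This gives $\|\Delta_n\|_{\ell^1_{\tau,N}(L^2\times\dot H^{-1})}\lesssim_{M,T}\tau^2$, as asserted.
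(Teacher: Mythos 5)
Your derivation of the representation \eqref{ErrRepres} is exactly the paper's: rewrite $\tau G(\Pi_{\tau^{-1}}U(t_n))$ as an $s$-integral, apply the fundamental theorem of calculus in $\sigma$, and use the product rule together with $A(0,g)^{\mathsf T}=(g,0)^{\mathsf T}$ and the chain rule for $g$; the frequency cut-off justifies the differentiation, as you note. Your bound is also correct, but you organize the time variable differently. The paper pulls out $\sup_{\sigma\in[0,\tau]}$ and bounds $\sup_\sigma\|\pi_{\tau^{-1}}u(t_n+\sigma)\|_{\ell^{p_\alpha}_{\tau,N}L^{3(\alpha-1)}}^{\alpha-1}$ by the \emph{discrete} Strichartz estimate of Proposition \ref{PropDiskrStrichU}, whereas you keep the $\sigma$-integral, observe that $\tau\sum_{n=0}^N\int_0^\tau(\cdots)\dd\sigma$ reassembles into $\tau\int_0^{(N+1)\tau}(\cdots)\dd t$, and invoke only the \emph{continuous-time} bound of Remark \ref{RemStrichU}. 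Your route is marginally more self-contained for this particular lemma, since it bypasses the discrete Strichartz machinery (which is genuinely needed elsewhere, e.g.\ in Lemma \ref{LemStab}); the paper's version has the advantage of uniformity in the shift $\sigma$, which is the form reused in the critical case (Step 5 of Proposition \ref{PropLokStab}). One detail you handle correctly that is worth emphasizing: you appeal to Remark \ref{RemStrichU} rather than trying to bound $\|\pi_{\tau^{-1}}u\|_{L^{p_\alpha}_TL^{3(\alpha-1)}}$ by $\|u\|_{L^{p_\alpha}_TL^{3(\alpha-1)}}$ directly, which would fail since the sharp cut-off $\pi_K$ is not an $L^q$-multiplier for $q\neq 2$.
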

	\begin{proof}
		Starting from \eqref{DefTerme}, we write
		\begin{align*}
		\Delta_n &= \int_0^\tau \Big[e^{-sA}G(\Pi_{\tau^{-1}} U(t_n+s))  - G(\Pi_{\tau^{-1}} U(t_n))\Big]\dd s \\
		&= \int_0^\tau \int_0^s \frac{\mathrm{d}}{\mathrm{d}\sigma} \Big[e^{-\sigma A}G(\Pi_{\tau^{-1}} U(t_n+\sigma)) \Big]\dd \sigma\dd s \\
		&= \int_0^\tau \int_0^s e^{-\sigma A}\Bigg[-\begin{pmatrix}0 & I \\ \Delta & 0 \end{pmatrix}\begin{pmatrix}0 \\ g(\pi_{\tau^{-1}} u(t_n+\sigma))\end{pmatrix}\\
		&\qquad \qquad \qquad \quad+\frac{\mathrm{d}}{\mathrm{d}\sigma}\begin{pmatrix}0 \\ g(\pi_{\tau^{-1}} u(t_n+\sigma))\end{pmatrix}\Bigg] \dd \sigma\dd s\\
		&= \int_0^\tau \int_0^s e^{-\sigma A}\begin{pmatrix}-g(\pi_{\tau^{-1}} u(t_n+\sigma)) \\ g'(\pi_{\tau^{-1}} u(t_n+\sigma))\pi_{\tau^{-1}} \partial_t u(t_n+\sigma)\end{pmatrix} \dd \sigma\dd s.
		\end{align*}
		Thanks to the regularization $\pi_{\tau^{-1}}$ there are no problems taking the derivative. We can now estimate
		\begin{align*}
		\|&\Delta_n\|_{\ell^1_{\tau,N}(L^2\times \dot H^{-1})} \\
		&\lesssim \tau^2 \sup_{\sigma \in [0,\tau]} \Big\|\begin{pmatrix}g(\pi_{\tau^{-1}} u(t_n+\sigma)) \\ g'(\pi_{\tau^{-1}} u(t_n+\sigma))\pi_{\tau^{-1}} \partial_t u(t_n+\sigma)\end{pmatrix}\Big\|_{\ell^1_{\tau,N}(L^2\times L^{6/5})} \\ &\lesssim  \tau^2 \sup_{\sigma \in [0,\tau]} \||\pi_{\tau^{-1}} u(t_n+\sigma)|^{\alpha-1}\|_{\ell^1_{\tau,N}L^3}\big(\|\pi_{\tau^{-1}} u\|_{L^\infty_T L^6} + \|\pi_{\tau^{-1}} \partial_t u\|_{L^\infty_T L^2}\big) \\ &\lesssim_T \tau^2 \sup_{\sigma \in [0,\tau]} \| \pi_{\tau^{-1}} u(t_n+\sigma)\|^{\alpha-1}_{\ell^{p_\alpha}_{\tau,N}L^{3(\alpha-1)}}\big(\| u\|_{L^\infty_T \dot H^1} + \| \partial_t u\|_{L^\infty_T L^2}\big) \\ &\lesssim_{M,T} \tau^2,
		\end{align*}
		using Sobolev's embedding, the estimate $|g'(w)z| \lesssim |w|^{\alpha-1}|z|$, Hölder's inequality, and Proposition \ref{PropDiskrStrichU}.
	\end{proof}
	
	Now we turn our attention to the terms $Q_n$ defined in \eqref{DefTerme}. To estimate these terms, we will need an a priori bound on the numerical solution $u_n$ in the discrete Strichartz norm $\|\cdot\|_{\ell^4_{\tau,N}L^{3(\alpha-1)}}$. Since such a bound is at first unclear, we use the relation $u_{n} = \pi_{\tau^{-1}} u(t_{n})-e_{n}$ combined with the discrete Strichartz estimate for $u$ from Proposition \ref{PropDiskrStrichU}. Hence, for proving the global error bound in Theorem \ref{Thm1}, we must in addition show the convergence $\|e_n\|_{\ell^4_{\tau,N}L^{3(\alpha-1)}} \to 0$ as $\tau \to 0$, see Proposition \ref{PropKonvSubkrit}. This idea goes back to \cite{Lubich} (in a setting with maximum norms in time).
	
	\begin{Lemma}\label{LemStab}
		Let $u$, $T$, and $M$ be given by Assumption \ref{Ass}. Then the inequality
		\begin{align*}
		\|&g(\pi_{\tau^{-1}} u(t_{m+n}))-g(u_{m+n})\|_{\ell^1_{\tau,j}\dot H^{-1}} \\
		&\lesssim_{M,T}  t_{j+1}^{1-\frac{\alpha-1}{4}}\Big(1+\|e_{m+n}\|^{\alpha-1}_{\ell^4_{\tau,j}L^{3(\alpha-1)}}\Big)\|e_{m+n}\|_{\ell^\infty_{\tau,j}L^2} 
		\end{align*}
		holds for all $\tau \in (0,1]$ and $m, j \in \N_0$ with $(m+j)\tau \le T$. 
	\end{Lemma}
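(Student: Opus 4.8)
The plan is to estimate the nonlinear difference $g(\pi_{\tau^{-1}} u(t_{m+n})) - g(u_{m+n})$ pointwise using the Lipschitz bound (\ref{NichtlinLip}). Setting $w \coloneqq \pi_{\tau^{-1}} u(t_{m+n})$ and recalling $u_{m+n} = w - e_{m+n}$, we have
\[
|g(w) - g(u_{m+n})| \lesssim \big(|w|^{\alpha-1} + |u_{m+n}|^{\alpha-1}\big)|e_{m+n}| \lesssim \big(|\pi_{\tau^{-1}} u(t_{m+n})|^{\alpha-1} + |e_{m+n}|^{\alpha-1}\big)|e_{m+n}|,
\]
absorbing the factor $|u_{m+n}|^{\alpha-1} \lesssim |\pi_{\tau^{-1}} u(t_{m+n})|^{\alpha-1} + |e_{m+n}|^{\alpha-1}$. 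Then I would pass to the $\dot H^{-1}$-norm via the Sobolev embedding $L^{6/5} \hookrightarrow \dot H^{-1}$, so it suffices to bound the $\ell^1_{\tau,j}L^{6/5}$-norm of the right-hand side.

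Next I would split the right-hand side into the two summands and apply Hölder's inequality in space to each. For the term $|\pi_{\tau^{-1}} u(t_{m+n})|^{\alpha-1}|e_{m+n}|$, I write $L^{6/5} = L^3 \cdot L^2$ in the sense of Hölder (since $1/3 + 1/2 = 5/6$), giving $\||\pi_{\tau^{-1}} u(t_{m+n})|^{\alpha-1}\|_{L^3}\|e_{m+n}\|_{L^2} = \|\pi_{\tau^{-1}} u(t_{m+n})\|_{L^{3(\alpha-1)}}^{\alpha-1}\|e_{m+n}\|_{L^2}$; similarly for the term $|e_{m+n}|^\alpha$ one gets $\|e_{m+n}\|_{L^{3(\alpha-1)}}^{\alpha-1}\|e_{m+n}\|_{L^2}$ by the same Hölder split (or, in the borderline indices, $\|e_{m+n}\|_{L^{3(\alpha-1)}}^{\alpha}$ directly — the details are routine). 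Now I perform the $\ell^1_{\tau,j}$-summation. Pulling out $\|e_{m+n}\|_{\ell^\infty_{\tau,j}L^2}$ (or $\|e_{m+n}\|_{\ell^\infty_{\tau,j}L^{3(\alpha-1)}}$ in the critical-index piece, which is controlled by interpolation) and applying Hölder in the discrete time variable with exponents chosen so that $\alpha-1$ copies of the $\ell^4_{\tau,j}$-norm of $u$ resp. $e$ appear alongside a remaining factor $\tau \sum_{\cdot} 1$: since $(\alpha-1)/4 \le 1$ for $\alpha \le 5$, the leftover exponent is $1 - (\alpha-1)/4 \ge 0$ and the discrete-time Hölder produces precisely the factor $\big(\tau \sum_{n=0}^{j} 1\big)^{1-(\alpha-1)/4} = t_{j+1}^{1-(\alpha-1)/4}$. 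Finally I invoke Proposition \ref{PropDiskrStrichU} with $K = \tau^{-1}$ (so $K\tau = 1$) to bound $\|\pi_{\tau^{-1}} u(t_{m+n}+\sigma)\|_{\ell^4_{\tau,j}L^{3(\alpha-1)}} \lesssim_{M,T} 1$, which disposes of the $u$-dependent factor and leaves exactly the claimed right-hand side.

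The only mildly delicate point is the bookkeeping of Hölder exponents in the discrete time sum, especially keeping track of whether one ends up with $\|e\|_{\ell^4_{\tau,j}L^{3(\alpha-1)}}^{\alpha-1}\|e\|_{\ell^\infty_{\tau,j}L^2}$ or a pure power of the $\ell^4 L^{3(\alpha-1)}$-norm in the second summand; one can always bound the pure power $\|e\|_{\ell^4_{\tau,j}L^{3(\alpha-1)}}^{\alpha}$ by interpolating one factor back to $L^2$ (using that $L^2 \cap L^{3(\alpha-1)} \hookrightarrow L^{3(\alpha-1)}$ trivially, or an endpoint-free interpolation) so that the stated form with the $\ell^\infty L^2$-factor and the $(1 + \|e\|^{\alpha-1})$ prefactor is recovered. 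Everything else — the Sobolev embedding, the spatial Hölder splits, and the application of Proposition \ref{PropDiskrStrichU} — is entirely standard. The main obstacle, if any, is merely ensuring the time-integrability exponent $1 - (\alpha-1)/4$ is nonnegative, which holds precisely because $\alpha \le 5$, consistent with the subcritical/critical range under consideration.
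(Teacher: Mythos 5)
Your proposal follows essentially the same route as the paper: pointwise Lipschitz bound \eqref{NichtlinLip}, the embedding $L^{6/5}\hookrightarrow \dot H^{-1}$, the spatial H\"older split $L^{6/5}=L^3\cdot L^2$, a discrete-time H\"older producing the factor $t_{j+1}^{1-\frac{\alpha-1}{4}}$, and Proposition~\ref{PropDiskrStrichU} for the $u$-dependent factor. (The paper substitutes $u_{m+n}=\pi_{\tau^{-1}}u(t_{m+n})-e_{m+n}$ only after the time-H\"older step rather than pointwise at the start, but this is immaterial; also, your worry about the $|e|^\alpha$ term is unfounded — the identical split $\||e|^{\alpha-1}e\|_{L^{6/5}}\le \||e|^{\alpha-1}\|_{L^3}\|e\|_{L^2}$ applies, no interpolation needed.)

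One step as written does not go through: you invoke Proposition~\ref{PropDiskrStrichU} to bound $\|\pi_{\tau^{-1}}u(t_{m+n})\|_{\ell^4_{\tau,j}L^{3(\alpha-1)}}$, but that proposition requires $(p,q,1)$ to be wave admissible, and the scaling condition $\frac1p+\frac{3}{q}=\frac12$ with $q=3(\alpha-1)$ forces $p=p_\alpha$, which equals $4$ only in the critical case $\alpha=5$. For $\alpha<5$ the triple $(4,3(\alpha-1),1)$ is not admissible, so the proposition gives control only of the $\ell^{p_\alpha}_{\tau,j}L^{3(\alpha-1)}$-norm. The fix is the one-line H\"older estimate $\|\cdot\|_{\ell^4_{\tau,j}}\le t_{j+1}^{\frac14-\frac{1}{p_\alpha}}\|\cdot\|_{\ell^{p_\alpha}_{\tau,j}}$ on the finite index set, which costs only a constant depending on $T$; this is exactly the intermediate step ($\lesssim_T$) in the paper's chain of inequalities. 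With that correction your argument is complete and coincides with the paper's.
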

	\begin{proof}
		Similar as in Lemma \ref{LemB} we estimate
		\begin{align*}
		\|&g(\pi_{\tau^{-1}} u(t_{m+n}))-g(u_{m+n})\|_{\ell^1_{\tau,j}\dot H^{-1}}\\ &\lesssim \Big(\|\pi_{\tau^{-1}} u(t_{m+n})\|^{\alpha-1}_{\ell^{\alpha-1}_{\tau,j}L^{3(\alpha-1)}}+\|u_{m+n}\|^{\alpha-1}_{\ell^{\alpha-1}_{\tau,j}L^{3(\alpha-1)}}\Big)\\
		&\quad \cdot \|\pi_{\tau^{-1}} u(t_{m+n})-u_{m+n}\|_{\ell^\infty_{\tau,j}L^2}\\
		&\le t_{j+1}^{1-\frac{\alpha-1}{4}}\Big(\|\pi_{\tau^{-1}} u(t_{m+n})\|^{\alpha-1}_{\ell^4_{\tau,j}L^{3(\alpha-1)}}+\|u_{m+n}\|^{\alpha-1}_{\ell^4_{\tau,j}L^{3(\alpha-1)}}\Big)\|e_{m+n}\|_{\ell^\infty_{\tau,j}L^2}\\
		&\lesssim_T t_{j+1}^{1-\frac{\alpha-1}{4}}\Big(\|\pi_{\tau^{-1}} u(t_{m+n})\|^{\alpha-1}_{\ell^{p_\alpha}_{\tau,j}L^{3(\alpha-1)}}+\|e_{m+n}\|^{\alpha-1}_{\ell^4_{\tau,j}L^{3(\alpha-1)}}\Big)\|e_{m+n}\|_{\ell^\infty_{\tau,j}L^2}\\
		&\lesssim_{M,T}  t_{j+1}^{1-\frac{\alpha-1}{4}}\Big(1+\|e_{m+n}\|^{\alpha-1}_{\ell^4_{\tau,j}L^{3(\alpha-1)}}\Big)\|e_{m+n}\|_{\ell^\infty_{\tau,j}L^2}.
		\end{align*}
		Here we keep the power of $t_{j+1}$ gained by Hölder from the change from the $\ell^{\alpha-1}_{\tau,j}$- to the $\ell^4_{\tau,j}$-norm and we further insert $u_{m+n} = \pi_{\tau^{-1}} u(t_{m+n})-e_{m+n}$. The estimate in the last line follows from Proposition \ref{PropDiskrStrichU}.
	\end{proof}
	
	Due to scaling considerations, the convergence order of $\|e_n\|_{\ell^4_{\tau,N}L^{3(\alpha-1)}}$ will be $\frac{1}{\alpha-1}-\frac{1}{4}$ since we only assume $\dot H^1$ regularity of the solution $u$. Thus, in the critical case $\alpha=5$ we cannot prove any convergence rate in this Strichartz norm, which is one reason that makes this case considerably more difficult. Another one is that it is no longer possible to gain a power of $t_{j+1}$ in the previous lemma. We therefore first focus on the easier case $\alpha<5$. 
	
	To measure the error simultaneously in two different norms, we define 
	\begin{equation}\label{DefNormSubkrit}
	\|E_n\|_{\tau,j} \coloneqq \max\Big\{\tau^{-1}\|E_n\|_{\ell^\infty_{\tau,j}(L^2 \times \dot H^{-1})}, \tau^{{\gamma_\alpha}-1}\|e_n\|_{\ell^4_{\tau,j}L^{3(\alpha-1)}} \Big\},
	\end{equation}
	where $j \in \N_0$ is a number with $j\tau \le T$ and the parameter ${\gamma_\alpha}$ is given by
	\begin{equation*}
	{\gamma_\alpha} \coloneqq \frac{5}{4}-\frac{1}{\alpha-1} \in \Big[\frac{3}{4},1\Big].
	\end{equation*}
	Hence, $(4,3(\alpha-1),\gamma_\alpha)$ is wave admissible and ${\gamma_\alpha}<1$ if $\alpha<5$. We are now ready to perform the main step of the convergence proof in the subcritical case. We proceed by a ``double induction'', similar as in Section 9 of \cite{ORS}.
	
	\begin{Proposition}\label{PropKonvSubkrit}
		Let $\alpha<5$ and $U=(u,\partial_tu)$, $T$, and $M$ be given by Assumption \ref{Ass}. Define $E_n$ by \eqref{DefMainError}. Then there is a number $\tau_0>0$ depending only on $M$ and $T$ such that we have the estimate
		\[\|E_n\|_{\tau,\lfloor\frac{T}{\tau}\rfloor} \lesssim_{M,T,\alpha} 1 \]
		for all $\tau \in (0,\tau_0]$.
	\end{Proposition}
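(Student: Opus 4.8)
The plan is to combine the error recursion (\ref{FehlerRek}) with the discrete-time Strichartz estimates of Section \ref{SecStrich} in a continuity argument over a short time block, and to iterate this over blocks covering $[0,T]$ --- the announced double induction. Fix a length $T_1 \in (0,1]$, to be specified later in terms of $M$, $T$, $\alpha$ only, and set $\ell_1 \coloneqq \lfloor T_1/\tau\rfloor$; we will take $\tau_0 \le T_1$, so $\ell_1 \ge 1$. For $m \in \N_0$ with $t_{m+\ell_1}\le T$ we apply (\ref{FehlerRek}) with base point $m$ and estimate the four terms in the combined norm $\|E_{m+n}\|_{\tau,j}$ from (\ref{DefNormSubkrit}), where $n \in \{0,\dots,j\}$ and $j \le \ell_1$. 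The $\ell^\infty_\tau(L^2\times\dot H^{-1})$-component is handled by the uniform boundedness of $e^{tA}$ on $L^2\times\dot H^{-1}$ together with the triangle inequality; the $\ell^4_\tau L^{3(\alpha-1)}$-component of $e_{m+n}$ is handled by the discrete homogeneous, inhomogeneous and hybrid Strichartz estimates (\ref{DisStrich1}), (\ref{DisStrich2}), (\ref{Hyb1}) for the wave-admissible triple $(4,3(\alpha-1),\gamma_\alpha)$ with cut-off level $K=\tau^{-1}$, so that all prefactors $(K\tau)^{1/4}$ equal $1$. The defect terms $B_m$, $\Delta_n$, $Q_n$ from (\ref{DefTerme}) naturally live in $\dot H^{-1}$, whereas these Strichartz estimates lose $\gamma_\alpha$ derivatives; one converts the loss into a factor $K^{\gamma_\alpha}=\tau^{-\gamma_\alpha}$ via Bernstein's inequality for the cut-off $\pi_{\tau^{-1}}$, cf.\ Remark \ref{RemDisStr}a), and this factor is cancelled exactly by the weight $\tau^{\gamma_\alpha-1}$ in (\ref{DefNormSubkrit}).

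Carrying out the estimates, the linear term $e^{n\tau A}E_m$ contributes $\lesssim\|E_n\|_{\tau,m}$ (both components reduce to $\tau^{-1}\|E_m\|_{L^2\times\dot H^{-1}}$ after Bernstein), the term with $B_m$ is $\lesssim_{M,T}1$ by Lemma \ref{LemB} (whose gain $1/K=\tau$ balances the powers of $\tau$ from Bernstein and the weights), and the term with the consistency defects $\Delta_{m+k}$ is $\lesssim_{M,T}1$ by Lemma \ref{LemLokF}. The only critical contribution is the one with the stability defects $Q_{m+k}=(0,\,g(\pi_{\tau^{-1}}u(t_{m+k}))-g(u_{m+k}))$: feeding in Lemma \ref{LemStab} with index range $j-1$ (so the gained time power is $\lesssim T_1^{1-(\alpha-1)/4}$) and using $\|e_{m+n}\|_{\ell^\infty_{\tau,j}L^2}\le\tau\|E_{m+n}\|_{\tau,j}$ together with $\|e_{m+n}\|_{\ell^4_{\tau,j}L^{3(\alpha-1)}}\le\tau^{1-\gamma_\alpha}\|E_{m+n}\|_{\tau,j}$ one obtains, with $\delta\coloneqq(1-\gamma_\alpha)(\alpha-1)=1-\tfrac{\alpha-1}{4}>0$, an inequality of the form
\[\|E_{m+n}\|_{\tau,j}\le C_\ast\big(1+\|E_n\|_{\tau,m}\big)+C_\ast\,T_1^{1-\frac{\alpha-1}{4}}\big(1+\tau^{\delta}\|E_{m+n}\|_{\tau,j-1}^{\alpha-1}\big)\|E_{m+n}\|_{\tau,j-1}\]
for $1\le j\le\ell_1$, with $C_\ast=C_\ast(M,T,\alpha)$; the right-hand side involves $\|E_{m+n}\|_{\tau,j-1}$ rather than $\|E_{m+n}\|_{\tau,j}$ because $Q_{m+k}$ enters the recursion only for $k\le j-1$.

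Now the inner induction is a discrete continuity argument: if the incoming error satisfies $\|E_n\|_{\tau,m}\le Y$, then $j\mapsto\|E_{m+n}\|_{\tau,j}$ is nondecreasing, finitely valued, and bounded by $Y$ at $j=0$; choosing $T_1$ small in terms of $C_\ast$ and $\tau_0$ small enough that $\tau_0^{\delta}(8C_\ast(1+Y))^{\alpha-1}\le 1$, one checks that the bound $\|E_{m+n}\|_{\tau,j}\le 8C_\ast(1+Y)$ cannot first fail at any $j^\dagger\le\ell_1$ (apply the displayed inequality at $j^\dagger$, using the bound at $j^\dagger-1$ to control the superlinear term), so in fact $\|E_{m+n}\|_{\tau,\ell_1}\le 2C_\ast(1+Y)$. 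The outer induction then uses $\|E_n\|_{\tau,m+\ell_1}\lesssim\|E_n\|_{\tau,m}+\|E_{m+n}\|_{\tau,\ell_1}$ and $E_0=0$: the incoming bounds $Y_l$ over the $L\coloneqq\lceil T/T_1\rceil$ blocks obey $Y_0=0$ and $Y_{l+1}\le C'(1+Y_l)$ with $C'=C'(M,T,\alpha)$, so $Y_l\le(C'+1)^L$ for all $l$. As $T_1$, hence $L$ and $C'$, depend only on $M,T,\alpha$, this is a fixed constant; fixing finally $\tau_0$ so that $\tau_0\le T_1$ and $\tau_0^{\delta}(8C_\ast(1+(C'+1)^L))^{\alpha-1}\le 1$ makes the inner step valid in every block, and $\|E_n\|_{\tau,\lfloor T/\tau\rfloor}\lesssim_{M,T,\alpha}1$ follows.

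The main obstacle is the stability defect $Q$, since its bound in Lemma \ref{LemStab} contains the very norm $\|e_{m+n}\|_{\ell^4_{\tau,j}L^{3(\alpha-1)}}$ to be controlled, forcing a bootstrap rather than a direct recursion. That the bootstrap closes uses both features of the subcritical case $\alpha<5$: the exponent $1-\tfrac{\alpha-1}{4}$ of the gained time power is positive, so the part of $Q$ linear in the error can be absorbed on a short block; and $\gamma_\alpha<1$, so the $\ell^4L^{3(\alpha-1)}$-error is genuinely of size $\tau^{1-\gamma_\alpha}\to 0$, which turns the superlinear part of $Q$ into a term carrying the small prefactor $\tau^{\delta}$ (and, later, makes the a priori Strichartz control of $u_n$ in Lemma \ref{LemStab} follow from $u_n=\pi_{\tau^{-1}}u(t_n)-e_n$ and Proposition \ref{PropDiskrStrichU}). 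The frequency cut-off is equally essential: without it the $\ell^4L^{3(\alpha-1)}$-Strichartz estimate is false, while with it Bernstein's inequality reduces the $\gamma_\alpha$-derivative loss to the harmless factor $\tau^{-\gamma_\alpha}$.
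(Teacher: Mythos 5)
Your proposal is correct and follows essentially the same route as the paper: the same block decomposition with $T_1$ chosen via the smallness of $T_1^{1-\frac{\alpha-1}{4}}$, the same double induction, the same use of the filtered Strichartz estimates with Bernstein to absorb the $\gamma_\alpha$-derivative loss, and the same step-size restriction $\tau_0^{(1-\gamma_\alpha)(\alpha-1)}\cdot(\text{global a priori bound})^{\alpha-1}\le 1$ to tame the superlinear part of $Q$. The only (cosmetic) difference is that you phrase the inner step as a "bound cannot first fail" continuity argument, whereas the paper absorbs the term $\tfrac12\|E_{N_m+n}\|_{\tau,j}$ directly into the left-hand side.
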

	\begin{proof}
		\emph{Step 1.} Let $C_1 \ge 1$ be the constant from the discrete Strichartz estimates from Section \ref{SecStrich} with respect to the parameters $(4,3(\alpha-1),\gamma_\alpha)$ and $C_2>0$ be the maximum of the constants from Lemma \ref{LemB}, \ref{LemLokF}, and \ref{LemStab}. Since $\alpha <5$, we can choose a time $T_1 \in (0,T]$ such that 
		\begin{equation}\label{DefT1}
		2C_1C_2T_1^{1-\frac{\alpha-1}{4}} \le \frac{1}{2}.
		\end{equation}
		Next, we define $L\coloneqq \lceil \frac{2T}{T_1} \rceil \in \N$ and the maximum step size $\tau_0>0$ by the relations
		\begin{equation}\label{Deftau0}
		\tau_0 \le T_1, \qquad \tau_0^{1-\gamma_\alpha} \le \frac{1}{4C_2(2C_1)^{L+1}},
		\end{equation}
		where we again exploit that $\alpha<5$. For $\tau \in (0,\tau_0]$ we set $N\coloneqq \lfloor T/\tau \rfloor \in \N$, $N_1\coloneqq \lfloor T_1/\tau \rfloor \in \{1,\dots,N\}$, and $N_m \coloneqq mN_1$ for all $m \in \N_0$. Note that these definitions yield $N \le N_L$. Moreover, we define the number $\ell \coloneqq \lfloor N/N_1\rfloor \in \{1,\dots,L\}$. We thus have the decomposition
		\[[0,t_N] = \bigcup_{m=0}^{\ell-1}[t_{N_m},t_{N_{m+1}}] \cup [t_{N_\ell},t_N], \]
		where each subinterval is of length less or equal $T_1$. 
		To measure the error in each of them, we define the error terms $\mathrm{Err}_m$ by $\mathrm{Err}_{-1}\coloneqq0$,
		\[
		\mathrm{Err}_m \coloneqq \|E_{{N_m}+n}\|_{\tau,N_1}, \quad m \in \{0,\dots,\ell-1\}, \qquad
		\mathrm{Err}_\ell \coloneqq \|E_{{N_\ell}+n}\|_{\tau,N-N_\ell}.
		\] \smallskip
		
		\emph{Step 2.} Our next goal is to show the recursion formula
		\begin{equation}\label{ErrRekursion}
		\mathrm{Err}_{m} \le 2C_1\mathrm{Err}_{m-1} + 4C_1C_2, \quad m \in \{0,\dots,\ell\}.
		\end{equation}
		Note that as soon as \eqref{ErrRekursion} is established for all indices in $\{0,\dots,m\}$, one can deduce the absolute bound
		\begin{equation}\label{ErrAbs}
		\mathrm{Err}_m \le 4C_1C_2\sum_{k=0}^m (2C_1)^k = 4C_1C_2\frac{(2C_1)^{m+1}-1}{2C_1-1} \le 4C_2(2C_1)^{L+1}.
		\end{equation}
		
		a) Fix an index $m \in \{0,\dots,\ell\}$. If $m>0$ we assume that \eqref{ErrRekursion} holds for all indices in $\{0,\dots,m-1\}$. We derive \eqref{ErrRekursion} by proving
		\begin{equation}\label{Indj}
		\|E_{N_m+n}\|_{\tau,j} \le 2C_1\mathrm{Err}_{m-1}+4C_1C_2, \quad j \in \{0,\dots,\min\{N_1,N-N_m\}\},
		\end{equation}
		via induction on $j$. 
		
		b) First let $j=0$. If $m=0$, there is nothing to prove since $E_0=0$. If $m>0$, we directly obtain
		\[\|E_{N_m+n}\|_{\tau,0} \le \|E_{N_{m-1}+n}\|_{\tau,N_1} = \mathrm{Err}_{m-1}, \]
		which in particular shows \eqref{Indj} for $j=0$. Next, let \eqref{Indj} be true for some $j \in \{0,\dots,\min\{N_1,N-N_m\}-1\}$. As in \eqref{ErrAbs} the induction assumption \eqref{ErrRekursion} then yields
		\begin{equation}\label{ErrAbs1}
		\|E_{N_m+n}\|_{\tau,j} \le 4C_2(2C_1)^{L+1}.
		\end{equation}
		We apply the discrete Strichartz estimates from Theorem \ref{ThmDisStrich} and Corollary \ref{KorHybStrich} to the error formula \eqref{FehlerRek}. Observe that \eqref{FehlerRek} implies $E_n = \Pi_{\tau^{-1}}E_n$ inductively. 
		We also use Bernstein's inequality to convert the derivative loss $|\nabla|^{\gamma_\alpha}$ into a loss of $\tau^{-{\gamma_\alpha}}$ in this Strichartz estimate, as explained in Remark \ref{RemDisStr} a). Combined with the definition of the $\|\cdot\|_{\tau,j}$-norm \eqref{DefNormSubkrit}, it follows
		\begin{align*}
		\|E_{N_m+n}\|_{\tau,j+1} &\le \|\Pi_{\tau^{-1}}e^{n\tau A}E_{N_m}\|_{\tau,j+1} + \Big\|\int_0^{t_n}\!\Pi_{\tau^{-1}} e^{(n\tau-s)A}B_{N_m}(s) \dd s\Big\|_{\tau,j+1}\\
		&\quad +  \Big\|\sum_{k=0}^{n-1}\Pi_{\tau^{-1}} e^{(n-k)\tau A}\Big(\Delta_{{N_m}+k} + \tau Q_{{N_m}+k}\Big)\Big\|_{\tau,j+1}\\
		&\le C_1\tau^{-1} \Big( \|E_{N_m}\|_{L^2\times\dot H^{-1}} + \|B_{N_m}\|_{L^1_{t_{j+1}}(L^2 \times \dot H^{-1})} \\
		&\quad + \tau^{-1}\|\Delta_{N_m+n}\|_{\ell^1_{\tau,j}(L^2 \times \dot H^{-1})}+ \|Q_{N_m+n}\|_{\ell^1_{\tau,j}(L^2 \times \dot H^{-1})} \Big)\\
		&\le C_1 \Big( \mathrm{Err}_{m-1} + \tau^{-1}\|g(u)-g(\pi_{\tau^{-1}} u)\|_{L^1_T\dot H^{-1}} \\
		&\quad+ \tau^{-2} \|\Delta_n\|_{\ell^1_{\tau,N}(L^2 \times \dot H^{-1})} \\
		&\quad + \tau^{-1}\|g(\pi_{\tau^{-1}} u(t_{N_m+n}))-g(u_{N_m+n})\|_{\ell^1_{\tau,j} \dot H^{-1}} \Big)\\
		&\le C_1 \mathrm{Err}_{m-1} +2C_1C_2 \\
		&\quad +\tau^{-1}C_1C_2t_{j+1}^{1-\frac{\alpha-1}{4}}\Big(1+\|e_{N_m+n}\|^{\alpha-1}_{\ell^4_{\tau,j}L^{3(\alpha-1)}}\Big)\|e_{N_m+n}\|_{\ell^\infty_{\tau,j}L^2} \\
		&\le C_1 \mathrm{Err}_{m-1} +2C_1C_2 \\
		&\quad +C_1C_2T_{1}^{1-\frac{\alpha-1}{4}}\!\Big(1+\tau^{(1-{\gamma_\alpha})(\alpha-1)}\|E_{N_m+n}\|^{\alpha-1}_{\tau,j}\Big)\|E_{N_m+n}\|_{\tau,j},
		\end{align*}
		where we applied Lemma \ref{LemB}, \ref{LemLokF}, and \ref{LemStab} to bound the error terms. 
		We insert \eqref{ErrAbs1}, the step size restriction $\tau \le \tau_0$ from \eqref{Deftau0} and the definition of $T_1$ from \eqref{DefT1} to obtain
		\begin{align*}
		\|E_{N_m+n}\|_{\tau,j+1} &\le C_1 \mathrm{Err}_{m-1} +2C_1C_2 +2C_1C_2T_{1}^{1-\frac{\alpha-1}{4}}\|E_{N_m+n}\|_{\tau,j}\\
		&\le  C_1 \mathrm{Err}_{m-1} +2C_1C_2 +\frac{1}{2}\|E_{N_m+n}\|_{\tau,j}.
		\end{align*}
		In particular, $\|E_{N_m+n}\|_{\tau,j+1}$ is finite. Since $ \|E_{N_m+n}\|_{\tau,j}\le \|E_{N_m+n}\|_{\tau,j+1}$, it follows that
		\begin{equation*}
		\|E_{N_m+n}\|_{\tau,j+1} \le  2C_1 \mathrm{Err}_{m-1} +4C_1C_2,
		\end{equation*}
		which closes the induction on $j$. Hence, the recursion \eqref{ErrRekursion} is true. The assertion now follows from \eqref{ErrAbs}.
	\end{proof}
	
	\begin{proof}[Proof of Theorem \ref{Thm1}]
		We take $\tau_0>0$ from Proposition \ref{PropKonvSubkrit} and infer
		\begin{align*}
		\|&U(t_n)-U_n\|_{L^2 \times \dot H^{-1}}\\
		&\le \|U(t_n)-\Pi_{\tau^{-1}}U(t_n)\|_{L^2 \times \dot H^{-1}} + \|\Pi_{\tau^{-1}}U(t_n)-U_n\|_{L^2 \times \dot H^{-1}}\\ 
		&\le \tau\|U(t_n)\|_{\dot H^1 \times L^2} + \|E_n\|_{L^2 \times \dot H^{-1}} \lesssim_{M,T,\alpha} \tau,
		\end{align*}
		using the estimates from Lemma \ref{LemPi} and Proposition \ref{PropKonvSubkrit}.
	\end{proof}
	
	\section{The critical case}\label{SecKrit}
	
	We now consider the case $\alpha=5$. As a first auxiliary result we establish convergence of the time-discrete Strichartz norm towards the time-continuous Strichartz norm. This will be done for the homogeneous part of the evolution, i.e.,
	\[S(t)(f,v) = \cos(t|\nabla|)f + |\nabla|^{-1} \sin(t|\nabla|)v,\]
	on a bounded interval $J$. We recall the notation
	\[\|F_n\|_{\ell^p_\tau(J,X)} = \Big(\tau \sum_{\substack{n \in \Z\\ n\tau \in J}} \|F_n\|_{X}^p \Big)^{\frac{1}{p}}.  \]
	
	\begin{Lemma}\label{LemKonvDiskrStet}
		Let $(p,q,\gamma)$ be wave admissible, $f \in \dot H^\gamma$, $v \in \dot H^{\gamma-1}$, and $J \subseteq \R$ be a bounded interval. Then we have the convergence
		\begin{align*}
		\|e^{\iu n \tau |\nabla|}\pi_{\tau^{-1}} f\|_{\ell^p_\tau(J,L^q)} &\to \|e^{\iu t |\nabla|}f\|_{L^p_JL^q},\\
		\|S(n\tau)\Pi_{\tau^{-1}} (f,v)\|_{\ell^p_\tau(J,L^q)} &\to \|S(t)(f,v)\|_{L^p_JL^q},
		\end{align*}
		as $\tau \to 0$. 
	\end{Lemma}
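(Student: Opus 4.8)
The plan is to combine three ingredients: density of Schwartz functions with compact Fourier support away from the origin, the discrete Strichartz estimate of Theorem~\ref{ThmDisStrich} \emph{with a $\tau$-independent constant} (which is exactly what the choice $K=\tau^{-1}$ buys, since then $(K\tau)^{1/p}=1$), and elementary Riemann-sum convergence in the smooth case. Throughout, $(p,q,\gamma)$ is fixed and $J=[a,b]$ is the given bounded interval; recall $\gamma\in[0,3/2)$, so the density result recalled in the introduction (Proposition 1.35 of \cite{Bahouri}) applies to $\dot H^\gamma$ and to $\dot H^{\gamma-1}$.

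I would first prove the half-wave statement. Fix $\eps>0$ and pick a Schwartz function $f_\eps$ with $\operatorname{supp}\widehat{f_\eps}$ compact in $\R^3\setminus\{0\}$ and $\|f-f_\eps\|_{\dot H^\gamma}<\eps$. For the smooth piece: $\pi_{\tau^{-1}}f_\eps=f_\eps$ once $\tau$ is small enough (depending on $f_\eps$), and $t\mapsto e^{\iu t|\nabla|}f_\eps$ is a continuous (indeed smooth) curve in $L^q$, so $\phi(t):=\|e^{\iu t|\nabla|}f_\eps\|_{L^q}^p$ is continuous on $\R$. Hence the Riemann sums $\tau\sum_{n\tau\in J}\phi(n\tau)$ converge to $\int_J\phi$ as $\tau\to0$, which after taking $p$-th roots gives
\[\|e^{\iu n\tau|\nabla|}\pi_{\tau^{-1}}f_\eps\|_{\ell^p_\tau(J,L^q)}\longrightarrow\|e^{\iu t|\nabla|}f_\eps\|_{L^p_JL^q};\]
for $p=\infty$ the same holds, with $\ell^\infty_\tau(J,L^q)$ a genuine supremum over the grid points in $J$, which converges to $\sup_{t\in\bar J}\|e^{\iu t|\nabla|}f_\eps\|_{L^q}$ by continuity and density of the grid. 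For the remainder, using that $\pi_{\tau^{-1}}$ commutes with $e^{\iu t|\nabla|}$ and that restricting the $\ell^p_\tau$-sum to $J$ only decreases it, Theorem~\ref{ThmDisStrich} with $K=\tau^{-1}$ yields
\[\|e^{\iu n\tau|\nabla|}\pi_{\tau^{-1}}(f-f_\eps)\|_{\ell^p_\tau(J,L^q)}\le\|\pi_{\tau^{-1}}e^{\iu n\tau|\nabla|}(f-f_\eps)\|_{\ell^p_\tau L^q}\lesssim_{p,q}\|f-f_\eps\|_{\dot H^\gamma}<C\eps\]
with $C$ independent of $\tau$, and likewise $\|e^{\iu t|\nabla|}(f-f_\eps)\|_{L^p_JL^q}\lesssim_{p,q}\|f-f_\eps\|_{\dot H^\gamma}<C\eps$ by Theorem~\ref{ThmStrich}. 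A triangle-inequality sandwich then gives $\limsup_{\tau\to0}\big|\,\|e^{\iu n\tau|\nabla|}\pi_{\tau^{-1}}f\|_{\ell^p_\tau(J,L^q)}-\|e^{\iu t|\nabla|}f\|_{L^p_JL^q}\,\big|\le 2C\eps$, and letting $\eps\to0$ settles the first claim.

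The statement for $S(t)$ follows by the same three-step scheme after writing $S(t)(f,v)=\tfrac12(e^{\iu t|\nabla|}+e^{-\iu t|\nabla|})f+\tfrac1{2\iu}(e^{\iu t|\nabla|}-e^{-\iu t|\nabla|})|\nabla|^{-1}v$ and noting $|\nabla|^{-1}v\in\dot H^\gamma$ since $v\in\dot H^{\gamma-1}$. One approximates $(f,v)$ by $(f_\eps,v_\eps)$ Schwartz with compact spectrum away from the origin, with $f_\eps\to f$ in $\dot H^\gamma$ and $v_\eps\to v$ in $\dot H^{\gamma-1}$; for small $\tau$, $\Pi_{\tau^{-1}}(f_\eps,v_\eps)=(f_\eps,v_\eps)$, and $S(n\tau)(f_\eps,v_\eps)$ samples the continuous curve $t\mapsto S(t)(f_\eps,v_\eps)$ in $L^q$, so the Riemann-sum step is unchanged. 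The remainder $S(n\tau)(f-f_\eps,v-v_\eps)$ is split into its two half-wave pieces and bounded by Theorems~\ref{ThmDisStrich} and~\ref{ThmStrich} (using Remark~\ref{RemDisStr}~b) for the $e^{-\iu t|\nabla|}$ terms), giving a bound $\lesssim_{p,q}\|f-f_\eps\|_{\dot H^\gamma}+\|v-v_\eps\|_{\dot H^{\gamma-1}}$, uniformly in $\tau$; the same sandwich concludes.

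I do not expect a genuine obstacle here: the only point requiring care is the $\tau$-uniformity of the constant in the discrete Strichartz estimate, which is precisely where the frequency cut-off at level $K=\tau^{-1}$ is needed, together with the minor bookkeeping of the Riemann-sum limit at the endpoint $p=\infty$, where the $\ell^\infty_\tau$-norm is a supremum over the grid points falling in $J$.
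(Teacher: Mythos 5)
Your argument is correct and coincides with the paper's own proof: both approximate $f$ by a Schwartz function with compact Fourier support (so that $\pi_{\tau^{-1}}$ acts as the identity for small $\tau$), pass to the limit via Riemann sums using continuity of $t\mapsto e^{\iu t|\nabla|}\varphi$ in $L^q$, and control the remainder uniformly in $\tau$ by the continuous and discrete Strichartz estimates with $K=\tau^{-1}$, concluding with the triangle inequality. Your extra remarks on the $p=\infty$ case and the explicit half-wave decomposition for $S(t)$ are consistent with what the paper leaves implicit.
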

	\begin{proof}
		Let $\eps>0$. We choose a Schwartz function $\varphi \in \mathcal S$ such that 
		$\supp \hat \varphi$ is compact
		and $\|f-\varphi\|_{\dot H^\gamma} \le \eps$. The function $t \mapsto e^{\iu t |\nabla|}\varphi$ is then continuous with values in $L^q$, since Hausdorff--Young and dominated convergence yield
		\[\|e^{\iu s |\nabla|}\varphi-e^{\iu t|\nabla|}\varphi\|_{L^q} = \|\mathcal F^{-1}((e^{\iu s |\xi|}-e^{\iu t |\xi|})\hat \varphi)\|_{L^q} \le \|(e^{\iu s |\xi|}-e^{\iu t |\xi|})\hat \varphi\|_{L^{q'}} \to 0 \]
		as $s \to t$. We also have $\pi_{\tau^{-1}} \varphi = \varphi$ for $\tau$ small enough, because $\supp \hat \varphi$ is compact. It follows
		\[\|e^{\iu n \tau |\nabla|}\pi_{\tau^{-1}} \varphi\|_{\ell^p_\tau(J,L^q)} \to \|e^{\iu t |\nabla|}\varphi\|_{L^p_JL^q} \] 
		as $\tau \to 0$, as there are essentially Riemann sums on the left-hand side. Putting things together, we conclude
		\begin{align*}
		\Big|\|&e^{\iu n \tau |\nabla|}\pi_{\tau^{-1}} f\|_{\ell^p_\tau(J,L^q)} - \|e^{\iu t |\nabla|}f\|_{L^p_JL^q}\Big|\\ &\le \|e^{\iu n \tau |\nabla|}\pi_{\tau^{-1}} (f-\varphi)\|_{\ell^p_\tau(J,L^q)} + \Big|\|e^{\iu n \tau |\nabla|}\pi_{\tau^{-1}} \varphi\|_{\ell^p_\tau(J,L^q)} - \|e^{\iu t |\nabla|}\varphi\|_{L^p_JL^q}\Big| \\
		&\quad + \|e^{\iu t |\nabla|}(\varphi-f)\|_{L^p_JL^q}\\
		&\lesssim_{p,q} 2 \|f-\varphi\|_{\dot H^\gamma} + \eps \le 3\eps
		\end{align*}
		for $\tau$ small enough, using the reverse triangle inequality and Strichartz estimates from Theorem \ref{ThmStrich} and \ref{ThmDisStrich}. Thus, the first assertion is shown. The proof of the second one follows the same lines.
	\end{proof}

	To show the error bound in the critical case, we use a regularization argument. For $H^2 \times H^1$ initial data, it requires first-order convergence of the scheme \eqref{LieFilt} in the $\dot H^1 \times L^2$-norm. To use this fact, we also need the continuous dependence on the initial data, both for the equation \eqref{NLW} and the scheme \eqref{LieFilt}. We show these results under a smallness condition on a Strichartz norm of the orbit, which can always be fulfilled by choosing a small end time $b$, see Theorem \ref{ThmStrich}.
	We note that $b$ in Proposition \ref{PropLokStab} corresponds to a possible choice of $T_0$ in Theorem \ref{ThmLokWoh}, cf.\ \cite{Sogge,Tao2006}.
	In the following we frequently use the discrete Duhamel formula
	\begin{equation}\label{DiskrDuhamel}
	U_{n} = e^{n\tau A}\Pi_{\tau^{-1}} U^0 + \tau \sum_{k=0}^{n-1}\Pi_{\tau^{-1}} e^{(n-k)\tau A} G(U_{k})
	\end{equation}
	for the scheme \eqref{LieFilt}, see \eqref{DiskrDuhamelMitM}. The first component of \eqref{DiskrDuhamel} reads as
	\begin{equation}\label{DiskrDuh1}
	u_n = S(t_n)\Pi_{\tau^{-1}} U^0 + \tau \sum_{k=0}^{n-1} |\nabla|^{-1} \sin(t_{n-k}|\nabla|)\pi_{\tau^{-1}} g(u_k), 
	\end{equation}
	similar as in the continuous case \eqref{Duh1}.
	
	\begin{Proposition}\label{PropLokStab}
		Let $R>0$. Then there is a radius $\delta_0 = \delta_0(R)>0$ such that for any $\delta \in (0,\delta_0]$ the following is true. For all $W^0 \in \dot H^1 \times L^2$ with $\|W^0\|_{\dot H^1\times L^2} \le R$ and every $b>0$ with $\|S(\cdot)W^0\|_{L^4_bL^{12}} \le \delta$, there is a time step $\bar \tau = \bar \tau(\delta,W^0,b)>0$ such that the next assertions hold. \smallskip
		
		a) For every $Y^0, Z^0 \in \overline B_{\dot H^1 \times L^2}(W^0,\delta)$, the solutions $Y$ and $Z$ of \eqref{NLW} with $\alpha=5$ and initial values $Y^0$ resp.\ $Z^0$ exist on $[0,b]$. Moreover, we then have the estimates
		\begin{align}
		\label{yklein}\|\pi_{\tau^{-1}} y(t_n)\|_{\ell^4_\tau([0,b],L^{12})} &\le \kappa\delta, \\
		\label{ynklein}\|y_n\|_{\ell^4_\tau([0,b],L^{12})} &\le \kappa\delta,\\	
		\label{StetAbh}\|Y-Z\|_{L^\infty([0,b],\dot H^1 \times L^2)} &\le 2\|Y^0-Z^0\|_{\dot H^1 \times L^2}, \\
		\label{LokStab}\|Y_n-Z_n\|_{\ell^\infty_\tau([0,b],\dot H^1\times L^2)} &\le 2\|Y^0-Z^0\|_{\dot H^1 \times L^2},
		\end{align}
		for all $\tau \in (0,\bar \tau]$, where $Y_n$ (resp.\ $Z_n$) are the iterates of \eqref{LieFilt} for initial values $\Pi_{\tau^{-1}} Y^0$ (resp.\ $\Pi_{\tau^{-1}} Z^0$), $y_n$ (resp.\ $y$) is the first component of $Y_n$ (resp.\ $Y$), and $\kappa>0$ is a constant. \smallskip
		
		b) If $Y^0 \in H^2 \times H^1$ satisfies $\|Y^0-W^0\|_{\dot H^1 \times L^2} \le \delta/2$, then there is a constant $C = C(\|Y^0\|_{H^2\times H^1})>0$ such that the error bound
		\begin{equation}\label{KonvBetterData}
		\|Y(t_n)-Y_n\|_{\ell^\infty_\tau([0,b],\dot H^1 \times L^2)} \le C\tau
		\end{equation}
		holds for all $\tau \in (0,\bar \tau]$.
	\end{Proposition}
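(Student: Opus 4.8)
The plan is to treat the two parts separately, building on the energy-critical wellposedness theory for the continuous-time estimates and on the discrete Strichartz estimates of Section~\ref{SecStrich} for the rest. First fix $\delta_0=\delta_0(R)>0$ small enough that every absorption argument below closes. For part~a) and fixed $W^0,b$ as in the statement, the continuous Strichartz estimate from Theorem~\ref{ThmStrich} gives $\|S(\cdot)Y^0\|_{L^4_bL^{12}}\le(1+C)\delta$ for every $Y^0\in\overline B_{\dot H^1\times L^2}(W^0,\delta)$. A standard contraction of the Duhamel map (\ref{Duh}) in a ball of $L^4_bL^{12}$, using the nonlinear bound $\|g(u)-g(w)\|_{L^1_bL^2}\lesssim(\|u\|_{L^4_bL^{12}}^4+\|w\|_{L^4_bL^{12}}^4)\|u-w\|_{L^\infty_bL^6}$ from (\ref{NichtlinLip}), Sobolev embedding, and the identity $\||\nabla|^{-1}\sin(t|\nabla|)h\|_{\dot H^1}=\|\sin(t|\nabla|)h\|_{L^2}\le\|h\|_{L^2}$, produces the solutions $Y,Z$ on $[0,b]$ with $\|y\|_{L^4_bL^{12}}\le\kappa'\delta$ and $\|Y\|_{L^\infty_b\dot H^1\times L^2}\le 2R$ (likewise for $Z$). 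The Lipschitz estimate (\ref{StetAbh}) then follows from the Duhamel formula for $Y-Z$, the fact that $e^{tA}$ is unitary on $\dot H^1\times L^2$ and $\Pi_{\tau^{-1}}$ a contraction there, and the energy estimate, absorbing the nonlinear contribution $\lesssim\delta^4\|Y-Z\|_{L^\infty_b\dot H^1\times L^2}$.

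For the discrete estimates the key new tool is Lemma~\ref{LemKonvDiskrStet}: it yields $\|S(n\tau)\Pi_{\tau^{-1}}W^0\|_{\ell^4_\tau([0,b],L^{12})}\to\|S(t)W^0\|_{L^4_bL^{12}}\le\delta$ as $\tau\to0$, so for $\tau$ below a threshold $\bar\tau$ depending only on $W^0$ (and $\delta,b$) this discrete linear norm is $\le2\delta$; for an arbitrary $Y^0$ in the ball one writes $Y^0=W^0+(Y^0-W^0)$ and estimates the remainder by the discrete homogeneous Strichartz estimate (\ref{DisStrich1}) with $K=\tau^{-1}$ (which loses no power of $\tau$), obtaining $\|S(n\tau)\Pi_{\tau^{-1}}Y^0\|_{\ell^4_\tau([0,b],L^{12})}\lesssim\delta$ uniformly over the ball. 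Estimate (\ref{yklein}) then follows by applying (\ref{DisStrich1}) and the hybrid estimate (\ref{Hyb1}) to the Duhamel formula (\ref{Duh1}) for $\pi_{\tau^{-1}}y(t_n)$, after commuting $|\nabla|^{-1}$ into the propagator so that the source becomes $|\nabla|^{-1}g(y)$ with $\||\nabla|^{-1}g(y)\|_{L^1_b\dot H^1}=\|g(y)\|_{L^1_bL^2}\lesssim\delta^4R$. For (\ref{ynklein}) and the $\dot H^1\times L^2$-boundedness of $Y_n$ one runs a discrete bootstrap over $N\in\{0,\dots,\lfloor b/\tau\rfloor\}$: the discrete Duhamel formula (\ref{DiskrDuh1}), the discrete Strichartz estimates (\ref{DisStrich1})--(\ref{DisStrich2}) in the form of Remark~\ref{RemDisStr}~c) (so the right-hand side involves only past indices), the energy estimate, and $\|g(u_n)\|_{\ell^1_{\tau,N-1}L^2}\lesssim\|u_n\|_{\ell^4_{\tau,N-1}L^{12}}^4\|U_n\|_{\ell^\infty_{\tau,N-1}\dot H^1\times L^2}$, together with the just-proved smallness of the linear part, close the a priori bounds $\|y_n\|_{\ell^4_\tau([0,b],L^{12})}\le\kappa\delta$ and $\|Y_n\|_{\ell^\infty_\tau([0,b],\dot H^1\times L^2)}\le2R$. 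Estimate (\ref{LokStab}) is then proved exactly as (\ref{StetAbh}), using the discrete Duhamel formula for $Y_n-Z_n$, the energy estimate, (\ref{NichtlinLip}), and (\ref{ynklein}).

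For part~b), I would first note persistence of regularity: since $Y^0\in H^2\times H^1$ and $\|y\|_{L^4_bL^{12}}\le\kappa\delta$ is small by part~a), the bound $\|g(u)\|_{H^1}\lesssim\|u\|_{L^{12}}^4\|u\|_{H^2}$ lets one absorb the nonlinearity in the $H^2\times H^1$-energy estimate and conclude $Y\in C([0,b],H^2\times H^1)$ with norm controlled by $\|Y^0\|_{H^2\times H^1}$. Lemma~\ref{LemPi} (with $\gamma=2$ resp.\ $\gamma=1$, $K=\tau^{-1}$) then gives $\|Y(t_n)-\Pi_{\tau^{-1}}Y(t_n)\|_{\dot H^1\times L^2}\lesssim\tau\|Y^0\|_{H^2\times H^1}$, so it remains to bound $E_n=\Pi_{\tau^{-1}}Y(t_n)-Y_n$ in $\dot H^1\times L^2$. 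For this I apply the error formula (\ref{FehlerRek}) with $m=0$ (hence $E_0=0$) and estimate the three terms directly in $\dot H^1\times L^2$, using again that $e^{n\tau A}$ is unitary and $\Pi_{\tau^{-1}}$ contractive there. The $B_0$-term is $O(\tau)$ since, by (\ref{NichtlinLip}), $\|g(u)-g(\pi_{\tau^{-1}}u)\|_{L^2}\lesssim(\|u\|_{L^{12}}^4+\|\pi_{\tau^{-1}}u\|_{L^{12}}^4)\|(I-\pi_{\tau^{-1}})u\|_{L^6}$ with $\|(I-\pi_{\tau^{-1}})u\|_{L^6}\lesssim\|(I-\pi_{\tau^{-1}})u\|_{\dot H^1}\le\tau\|u\|_{\dot H^2}$, while the factor $\|\pi_{\tau^{-1}}u\|_{L^{12}}$ (the sharp cut-off is not $L^{12}$-bounded) is controlled via $\|\pi_{\tau^{-1}}u\|_{L^{12}}\le\|u\|_{L^{12}}+\|(I-\pi_{\tau^{-1}})u\|_{\dot H^{5/4}}\lesssim\|u\|_{L^{12}}+\tau^{3/4}\|u\|_{\dot H^2}$, after which integration in time uses $u\in L^4_bL^{12}$ and $u\in C([0,b],H^2)\hookrightarrow C([0,b],L^\infty)$. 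The $\Delta_n$-term is $O(\tau^2)$ in $\ell^1_{\tau,N}(\dot H^1\times L^2)$ via the representation from Lemma~\ref{LemLokF} (estimating $g(\pi_{\tau^{-1}}u)$ in $\dot H^1$ and $g'(\pi_{\tau^{-1}}u)\pi_{\tau^{-1}}\partial_tu$ in $L^2$ with the $H^2\times H^1$-bounds), hence $O(\tau)$ after the division by $\tau$ coming from the sum. The $Q_n$-term equals, up to unitaries, $\tau\sum_k(g(\pi_{\tau^{-1}}u(t_k))-g(u_k))$, which by (\ref{NichtlinLip}), Sobolev embedding, and (\ref{yklein})--(\ref{ynklein}) is bounded by $2(\kappa\delta)^4\|E_n\|_{\ell^\infty_{\tau,N}\dot H^1\times L^2}$ and thus absorbable for $\delta_0$ small. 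Collecting, $\|E_n\|_{\ell^\infty_{\tau,N}\dot H^1\times L^2}\le C_{\|Y^0\|_{H^2\times H^1},b}\,\tau+\tfrac12\|E_n\|_{\ell^\infty_{\tau,N}\dot H^1\times L^2}$, where the left-hand side is finite for each fixed $\tau$ and $N$ because the filtered scheme keeps $U_n$ bandlimited and in $\dot H^1\times L^2$; this yields (\ref{KonvBetterData}).

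The main obstacle, I expect, is the bookkeeping of the quantifiers: ensuring $\delta_0$ depends only on $R$ and $\bar\tau$ only on $(\delta,W^0,b)$. In particular, Lemma~\ref{LemKonvDiskrStet} is a ``for each fixed datum'' statement, so extracting from it a bound on the discrete linear Strichartz norm that is uniform over the whole ball $\overline B(W^0,\delta)$ is the delicate point, and it is precisely what forces the splitting $Y^0=W^0+(Y^0-W^0)$; all subsequent absorptions must then be arranged with constants that do not degrade along the finitely many bootstrap steps. Minor technical points are the non-$L^p$-boundedness of the sharp cut-off $\pi_K$ in part~b), handled by the tail estimate $\|(I-\pi_K)u\|_{\dot H^{5/4}}\lesssim K^{-3/4}\|u\|_{\dot H^2}$ (same proof as Lemma~\ref{LemPi}), and the persistence of $H^2\times H^1$-regularity for the energy-critical equation, which is routine once $\|y\|_{L^4_bL^{12}}$ is known to be small but must be recorded.
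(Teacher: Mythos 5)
Your proposal is correct and follows essentially the same route as the paper: Lemma \ref{LemKonvDiskrStet} plus the splitting $Y^0=W^0+(Y^0-W^0)$ to get the discrete linear Strichartz norm small uniformly over the ball, a discrete induction/bootstrap for (\ref{ynklein}) and the $\dot H^1\times L^2$-bound, absorption via the $\delta^4$-smallness for (\ref{LokStab}), and for part b) the error recursion (\ref{FehlerRek}) with $m=0$ estimated directly in $\dot H^1\times L^2$, absorbing the $Q_n$-term by (\ref{yklein})--(\ref{ynklein}). The one genuine divergence is the persistence of $H^2\times H^1$-regularity in part b): you run a direct higher-order energy estimate using $\|\nabla(|y|^4y)\|_{L^2}\lesssim\|y\|_{L^{12}}^4\|\nabla y\|_{L^6}$ and Gronwall with the integrable coefficient $\|y\|_{L^{12}}^4$, whereas the paper avoids differentiating the equation altogether by applying the already-proved Lipschitz estimate (\ref{StetAbh}) to the translated solution $\mathcal T_hY$ and invoking the difference-quotient characterization of $H^2\times H^1$ (Proposition 9.3 of \cite{Brezis}). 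Both are valid; your version is the more standard persistence argument, while the paper's recycles (\ref{StetAbh}) and sidesteps any justification of the formal $H^2$-energy identity. Your handling of the sharp cut-off on $L^{12}$ (via the tail bound $\|(I-\pi_K)u\|_{\dot H^{5/4}}\lesssim K^{-3/4}\|u\|_{\dot H^2}$) also differs cosmetically from the paper's use of Remark \ref{RemStrichU}, but serves the same purpose.
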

	\begin{proof}
		\emph{Step 1.} Let $C_\mathrm{So} \ge 1$ be the constant from the Sobolev embedding $\dot H^1 \hookrightarrow L^6$ and $C_1 \ge 1$ be the constant from the Strichartz estimates in Section \ref{SecStrich}, where we choose the exponents $(p,q,1) = (4,12,1)$. We define
		\begin{equation}\label{DefDeltaInLokStab}
		\delta_0 \coloneqq \min\Big\{R, (3C_\mathrm{So}C_1(3+C_1)^4R)^{-\frac13} 
		,(10C_\mathrm{So}C_1(3+C_1)^4)^{-\frac14} \Big\}.
		\end{equation}
		Let $\delta \in (0,\delta_0]$. Since by assumption $\|S(\cdot)W^0\|_{L^4_bL^{12}} \le \delta$, Lemma \ref{LemKonvDiskrStet} yields a stepsize $\bar \tau>0$ such that
		\begin{equation}\label{LokStabKleinheit}
		\|S(t_n)\Pi_{\tau^{-1}} W^0\|_{\ell^4_\tau([0,b],L^{12})} \le 2\delta
		\end{equation}
		for all $\tau \in (0,\bar \tau]$.
		We first show 
		\begin{equation}\label{ClaimInLokStab}
		\|Y_n\|_{\ell^\infty_{\tau,j}(\dot H^1 \times L^2)} \le 3R, \qquad \|y_n\|_{\ell^4_{\tau,j}L^{12}} \le (3+C_1)\delta
		\end{equation}
		for all $\tau \in (0,\bar \tau]$ and $j \in \N_0$ with $j\tau \le b$. This in particular shows the inequality \eqref{ynklein} with $\kappa \coloneqq 3+C_1$. 
		
		We proceed by induction on $j$. For $j=0$, we clearly have 
		\begin{equation}\label{LokStabIndj=0}
		\|Y_0\|_{\dot H^1 \times L^2} \le \|Y^0-W^0\|_{\dot H^1 \times L^2} + \|W^0\|_{\dot H^1 \times L^2} \le \delta + R \le 2R 
		\end{equation}
		since $\delta \le R$. Theorem \ref{ThmDisStrich} and \eqref{LokStabKleinheit} further imply
		\begin{align*}
		\|y_n\|_{\ell^4_{\tau,0}L^{12}} &= \tau^{\frac14} \|S(0)\Pi_{\tau^{-1}} Y^0\|_{L^{12}} \\
		&\le \tau^{\frac14} \|S(0)\Pi_{\tau^{-1}} (Y^0-W^0)\|_{L^{12}} + \tau^{\frac14} \|S(0)\Pi_{\tau^{-1}} W^0\|_{L^{12}}\\
		&\le C_1\|Y^0-W^0\|_{\dot H^1 \times L^2} + 2\delta \le C_1\delta + 2\delta = (2+C_1)\delta.
		\end{align*}
		
		For the induction step $j \leadsto j+1$, we assume that \eqref{ClaimInLokStab} holds for some $j \in \N_0$ with $(j+1)\tau \le b$. We compute
		\begin{align*}
		\|&Y_n\|_{\ell^\infty_{\tau,j+1}(\dot H^1 \times L^2)} \\
		&\le \|e^{n\tau A}\Pi_{\tau^{-1}} Y^0\|_{\ell^\infty_{\tau,j+1}(\dot H^1 \times L^2)} + \tau \Big\|\sum_{k=0}^{n-1}\Pi_{\tau^{-1}} e^{(n-k)\tau A} G(Y_{k})\Big\|_{\ell^\infty_{\tau,j+1}(\dot H^1 \times L^2)}\\
		&\le \|Y^0\|_{\dot H^1\times L^2} + \||y_n|^4y_n\|_{\ell^1_{\tau,j}L^2} \le 2R + \|y_n\|^4_{\ell^4_{\tau,j}L^{12}}\|y_n\|_{\ell^\infty_{\tau,j}L^{6}}\\
		&\le 2R + 3C_\mathrm{So}(3+C_1)^4\delta^4R \le 3R,
		\end{align*}
		by means of the discrete Duhamel formula \eqref{DiskrDuhamel}, \eqref{LokStabIndj=0}, Hölder's inequality, the induction assumption \eqref{ClaimInLokStab}, and the definition of $\delta$ from \eqref{DefDeltaInLokStab}. Similarly, using \eqref{DiskrDuh1} and Theorem \ref{ThmDisStrich}, we estimate
		\begin{align*}
		\|y_n\|_{\ell^4_{\tau,j+1}L^{12}}
		&\le \|S(t_n)\Pi_{\tau^{-1}} Y^0\|_{\ell^4_{\tau,j+1}L^{12}} \\
		&\quad + \tau \Big\|\sum_{k=0}^{n-1} |\nabla|^{-1} \sin(t_{n-k}|\nabla|)\pi_{\tau^{-1}} g(y_k)\Big\|_{\ell^4_{\tau,j+1}L^{12}}\\
		&\le \|S(t_n)\Pi_{\tau^{-1}} (Y^0-W^0)\|_{\ell^4_{\tau,j+1}L^{12}} + \|S(t_n)\Pi_{\tau^{-1}} W^0\|_{\ell^4_{\tau,j+1}L^{12}} \\
		&\quad + C_1\||y_n|^4y_n\|_{\ell^1_{\tau,j}L^2}\\
		&\le C_1\delta + 2\delta + 3C_\mathrm{So}C_1(3+C_1)^4\delta^4R \le (3+C_1)\delta.
		\end{align*}
		Hence, the claim \eqref{ClaimInLokStab} is true for all $j\tau \le b$. \smallskip
		
		\emph{Step 2.} Estimate \eqref{LokStab} is shown by an analogous argument starting from \eqref{DiskrDuhamel}. Using also \eqref{ynklein} for $z_n$, we deduce the inequality 
		\begin{align*}
		\|&Y_n-Z_n\|_{\ell^\infty_\tau([0,b],\dot H^1\times L^2)}\\
		&\le \|Y^0-Z^0\|_{\dot H^1 \times L^2} + \||y_n|^4y_n - |z_n|^4z_n\|_{\ell^1_\tau([0,b],L^2)} \\
		&\le \|Y^0-Z^0\|_{\dot H^1 \times L^2} + \frac52\|(|y_n|^4+|z_n|^4)|y_n - z_n|\|_{\ell^1_\tau([0,b],L^2)}\\
		&\le \|Y^0-Z^0\|_{\dot H^1 \times L^2}\\
		&\quad + \frac52\Big(\|y_n\|^4_{\ell^4_\tau([0,b],L^{12})}+\|z_n\|^4_{\ell^4_\tau([0,b],L^{12})}\Big) \|y_n - z_n\|_{\ell^\infty_\tau([0,b],L^6)} \\
		&\le \|Y^0-Z^0\|_{\dot H^1 \times L^2} + 5C_\mathrm{So}(3+C_1)^4\delta^4 \|Y_n-Z_n\|_{\ell^\infty_\tau([0,b],\dot H^1\times L^2)} \\
		&\le \|Y^0-Z^0\|_{\dot H^1 \times L^2} + \frac12 \|Y_n-Z_n\|_{\ell^\infty_\tau([0,b],\dot H^1\times L^2)}, 
		\end{align*}
		which in turn implies 
		\[\|Y_n-Z_n\|_{\ell^\infty_\tau([0,b],\dot H^1\times L^2)} \le 2\|Y^0-Z^0\|_{\dot H^1 \times L^2}, \]
		as desired. \smallskip
		
		\emph{Step 3.} 
		The existence of the continuous solutions $Y$ and $Z$ until time $b$ as well as the estimate \eqref{StetAbh} are part of the known local wellposedness theory of \eqref{NLW}, cf.\ Theorem \ref{ThmLokWoh} and Chapter 5.1 of \cite{Tao2006}. We therefore omit the proof. To carry it out, one can proceed analogously to Step 1 and 2, replacing the discrete norms by the continuous ones and the induction by a fixed point argument. The estimate \eqref{yklein} in discrete Strichartz norm can be shown in a similar way as \eqref{ynklein}. \smallskip
		
		\emph{Step 4.} Now we want to show the error bound \eqref{KonvBetterData} for better data $Y^0 \in H^2 \times H^1$ with $\|Y^0-W^0\|_{\dot H^1 \times L^2} \le \delta/2$. Since the nonlinearity $G$ leaves the space $H^2 \times H^1$ invariant and is Lipschitz continuous on balls, there is a unique solution $\tilde Y \in C([0,T_{\mathrm{max}}), H^2 \times H^1)$ of \eqref{NLW} with initial data $Y^0$ on a maximal existence interval $[0,T_\mathrm{max})$. By Sobolev's embedding, the integrability condition $\tilde y \in L^4_{\mathrm{loc}}([0,T_{\mathrm{max}}),L^{12})$ is satisfied. Hence, $\tilde Y$ coincides with the $\dot H^1 \times L^2$-solution $Y$ on $[0,b]$ by uniqueness, as long as they are both defined. 
		
		In the following, we show that $T_\mathrm{max} >b$. By a standard blow-up criterion, it suffices to show that $\|Y\|_{L^\infty([0,b],H^2 \times H^1)}$ is finite. 
		First, note that $(y,\partial_ty)$ belongs to $C([0,b], \dot H^1 \times L^2)$ and that the $L^2$-norm of $y$ stays bounded since
		\begin{align*}
		\|y(t)\|_{L^2} &\le \|y(0)\|_{L^2} + \int_0^t \|\partial_ty(s)\|_{L^2} \dd s \le \|Y^0\|_{H^2 \times H^1} + b\|\partial_ty\|_{L^\infty([0,b],L^2)}\\
		&<\infty 
		\end{align*}
		for all $t \in [0,b]$. 
		
		For the boundedness in the $\dot H^2 \times \dot H^1$-norm, we use that the Sobolev norm of a function can be expressed by bounds on the norms of difference quotients.
		For any $h\in \R^3$, we introduce the spatial translation operator $\mathcal T_{h}$ by $(\mathcal T_{h}(f,g))(x) \coloneqq (f(x+h),g(x+h))$, where $f$ and $g$ are functions on $\R^3$. By Proposition 9.3 of \cite{Brezis},
		\[\| {\mathcal T_{h}}Y^0-Y^0\|_{\dot H^1 \times L^2} \lesssim |h|\|Y^0\|_{\dot H^2\times \dot H^1}. \]
		Therefore, there is a number $h_0>0$ with $\| {\mathcal T_{h}}Y^0-Y^0\|_{\dot H^1 \times L^2} \le \delta/2$ for all $|h| \le h_0$. From now on we assume that $|h| \le h_0$. The triangle inequality yields $\| {\mathcal T_{h}}Y^0-W^0\|_{\dot H^1 \times L^2} \le \delta$. Since $\mathcal T_{h}Y$ solves \eqref{NLW2} with initial value $\mathcal T_{h}Y^0$, from \eqref{StetAbh} we can deduce that
		\begin{equation*}
		\|\mathcal T_{h}Y-Y\|_{L^\infty([0,b],\dot H^1 \times L^2)} \le 2\|\mathcal T_{h}Y^0-Y^0\|_{\dot H^1 \times L^2} \lesssim |h|\|Y^0\|_{\dot H^2\times \dot H^1}.
		\end{equation*}
		Proposition 9.3 of \cite{Brezis} now yields $Y(t) \in H^2 \times H^1$ for $t \in [0,b]$ and
		\begin{equation}\label{H2Bound}
		\|Y\|_{L^\infty([0,b],\dot H^2 \times \dot H^1)} \lesssim \|Y^0\|_{\dot H^2\times \dot H^1}.
		\end{equation}
		Thus, $T_\mathrm{max} >b$. \smallskip
		
		\emph{Step 5.} We can now estimate the error $\|Y(t_n)-Y_n\|_{\ell^\infty_\tau([0,b],\dot H^1 \times L^2)}$. Let $\tau \in (0,\bar\tau]$. For the rest of this proof, we allow our implicit constants to depend on $\|Y^0\|_{H^2 \times H^1}$. First, Lemma \ref{LemPi} and \eqref{H2Bound} imply
		\begin{align}\label{BessFehProj}
		\|&Y(t_n)-Y_n\|_{\ell^\infty_\tau([0,b],\dot H^1 \times L^2)}\nonumber \\
		&\le \|(I-\Pi_{\tau^{-1}})Y(t_n)\|_{\ell^\infty_\tau([0,b],\dot H^1 \times L^2)} + \|\Pi_{\tau^{-1}} Y(t_n)-Y_n\|_{\ell^\infty_\tau([0,b],\dot H^1 \times L^2)} \nonumber\\
		&\le \tau \|Y(t_n)\|_{\ell^\infty_\tau([0,b],\dot H^2 \times \dot H^1)} + \|\Pi_{\tau^{-1}} Y(t_n)-Y_n\|_{\ell^\infty_\tau([0,b],\dot H^1 \times L^2)} \nonumber\\
		&\lesssim \tau  + \|\Pi_{\tau^{-1}} Y(t_n)-Y_n\|_{\ell^\infty_\tau([0,b],\dot H^1 \times L^2)}
		. 
		\end{align}
		To estimate $\Pi_{\tau^{-1}} Y(t_n)-Y_n$, we use the expressions from Proposition \ref{PropFehlerRek} to write
		\begin{align*}
		\Pi_{\tau^{-1}} &Y(t_n)-Y_n\\
		&= \int_0^{t_n}\Pi_{\tau^{-1}} e^{(n\tau-s)A}B_0(s) \dd s +  \sum_{k=0}^{n-1}\Pi_{\tau^{-1}} e^{(n-k)\tau A}\Big(\Delta_{k} + \tau Q_{k}\Big),
		\end{align*}
		where the terms $B_0$, $\Delta_n$ and $Q_n$ now include $Y$ instead of $U$. A direct estimate gives 
		\begin{align}\label{BessFeh}
		 \|&\Pi_{\tau^{-1}}Y(t_n)-Y_n\|_{\ell^\infty_\tau([0,b],\dot H^1 \times L^2)} \nonumber\\
		& \le \||y|^4y-|\pi_{\tau^{-1}} y|^4\pi_{\tau^{-1}} y\|_{L^1_bL^2}\nonumber\\
		&\quad +  \Big\|\frac 1\tau\int_0^\tau e^{-sA} G(\Pi_{\tau^{-1}} Y(t_n+s)) \dd s - G(\Pi_{\tau^{-1}} Y(t_n))\Big\|_{\ell^1_\tau([0,b-\tau],\dot H^1 \times L^2)}\nonumber\\
		&\quad + \||\pi_{\tau^{-1}} y(t_n)|^4\pi_{\tau^{-1}} y(t_n)-|y_n|^4y_n\|_{\ell^1_\tau([0,b],L^2)}.
		\end{align}
		Here we interpret $[0,b-\tau] = \emptyset$ if $\tau>b$. The three summands in \eqref{BessFeh} can be bounded similarly as in the Lemmas \ref{LemB}, \ref{LemLokF}, and \ref{LemStab}. For the first summand, using Lemma \ref{LemPi} we infer 
		\begin{align*}
		\||y|^4y-|\pi_{\tau^{-1}} y|^4\pi_{\tau^{-1}} y\|_{L^1_bL^2} &\lesssim \|(|y|^4+|\pi_{\tau^{-1}} y|^4)|y-\pi_{\tau^{-1}} y|\|_{L^1_bL^2}\\
		&\le \Big(\|y\|^4_{L^4_bL^{12}} \!+ \|\pi_{\tau^{-1}} y\|^4_{L^4_bL^{12}}\!\Big)\|(I\!-\pi_{\tau^{-1}})y\|_{L^\infty_bL^6} \\
		&\lesssim \|(I-\pi_{\tau^{-1}})y\|_{L^\infty_b \dot H^1} \lesssim \tau \|y\|_{L^\infty_b \dot H^2} \lesssim \tau.
		\end{align*}
		Here, the bounds for the $L^4_bL^{12}$-norm follow from Theorem \ref{ThmLokWoh} and Remark \ref{RemStrichU}, since our $b$ corresponds to a possible choice of $T_0$ in Theorem \ref{ThmLokWoh}. By \eqref{ErrRepres} and Proposition \ref{PropDiskrStrichU}, the second summand is bounded by
		\begin{align*}
		&\frac1\tau\Big\|\int_0^\tau \int_0^s e^{-\sigma A}\begin{pmatrix}-g(\pi_{\tau^{-1}} y(t_n+\sigma)) \\ g'(\pi_{\tau^{-1}} y(t_n+\sigma))\pi_{\tau^{-1}} \partial_t y(t_n+\sigma)\end{pmatrix} \dd \sigma\dd s \Big\|_{\ell^1_\tau([0,b-\tau],\dot H^1 \times L^2)} \\
		&\lesssim \tau \sup_{\sigma \in [0,\tau]}\|\pi_{\tau^{-1}}y(t_n+\sigma)\|^4_{\ell^4_\tau([0,b-\tau], L^{12})} \Big( \||\nabla\pi_{\tau^{-1}} y|\|_{L^\infty_bL^6} + \|\pi_{\tau^{-1}} \partial_t y\|_{L^\infty_b L^6} \Big) \\
		&\lesssim \tau.
		\end{align*}
		Finally, we estimate the last part of \eqref{BessFeh} by
		\begin{align*}
		\||&\pi_{\tau^{-1}} y(t_n)|^4\pi_{\tau^{-1}} y(t_n)-|y_n|^4y_n\|_{\ell^1_\tau([0,b],L^2)} \\
		&\le \frac52 \Big(\|\pi_{\tau^{-1}} y(t_n)\|^4_{\ell^4_\tau([0,b],L^{12})} + \|y_n\|^4_{\ell^4_\tau([0,b],L^{12})}  \Big)\|\pi_{\tau^{-1}} y(t_n)-y_n\|_{\ell^\infty_\tau([0,b],L^6)} \\
		&\le 5 C_\mathrm{So}(3+C_1)^4 \delta^4 \|\Pi_{\tau^{-1}} Y(t_n)-Y_n\|_{\ell^\infty_\tau([0,b],\dot H^1 \times L^2)} \\
		&\le \frac12 \|\Pi_{\tau^{-1}} Y(t_n)-Y_n\|_{\ell^\infty_\tau([0,b],\dot H^1 \times L^2)},
		\end{align*}
		using \eqref{yklein}, \eqref{ynklein} and the definition of $\delta_0$ in \eqref{DefDeltaInLokStab}. This term can be absorbed by the left-hand side of \eqref{BessFeh}. Putting things together, \eqref{BessFehProj} and \eqref{BessFeh} imply \eqref{KonvBetterData}.
	\end{proof}
	
	Proposition \ref{PropLokStab} only gives a local statement on a possibly small time interval $[0,b]$. Since we want to show a global error bound on the potentially much larger interval $[0,T]$, we need to apply Proposition \ref{PropLokStab} recursively. To this aim, we first have to iterate the smallness condition in $L^4_bL^{12}$.

	\begin{Lemma}\label{LemUklein}
		Let $U=(u,\partial_tu)$, $T$, and $M$ be given by Assumption \ref{Ass} with $\alpha=5$ and let $\delta>0$. Then there are a number $L \in \N$ and times $0=T_0 < T_1 < \dots < T_L = T$, such that the inequality
		\[
		\|S(\cdot)U(T_m)\|_{L^4_{b_m}L^{12}} \le \delta
		\]
		holds for all $m \in \{0,\dots,L-1\}$, where we set $b_m \coloneqq T_{m+1}-T_m >0$. The number $L \in \N$ only depends on $\delta$, $M$, and $T$.
	\end{Lemma}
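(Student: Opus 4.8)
The plan is to reduce the statement to a one-dimensional greedy subdivision of $[0,T]$, after first showing that the free Strichartz norm $\|S(\cdot)U(T_m)\|_{L^4_{b_m}L^{12}}$ is controlled by the Strichartz norm of the genuine solution $u$ on the \emph{same} time window. Write $I_m\coloneqq[T_m,T_{m+1}]$, so $b_m=|I_m|$. The starting point is the Duhamel formula \eqref{Duh1} with initial time $T_m$, which gives
\[S(t)U(T_m) = u(T_m+t) - \int_0^t |\nabla|^{-1}\sin((t-s)|\nabla|)\,g(u(T_m+s))\dd s, \qquad t\ge0.\]
First I would estimate the integral term by the inhomogeneous continuous Strichartz estimate of Theorem \ref{ThmStrich} for the wave-admissible triple $(4,12,1)$ (writing $\sin(\theta|\nabla|)=\tfrac{1}{2\iu}(e^{\iu\theta|\nabla|}-e^{-\iu\theta|\nabla|})$, pulling out the $|\nabla|^{-1}$ so that the $\dot H^1$-norm of the forcing becomes $\|g(u(T_m+\cdot))\|_{L^1_{b_m}L^2}$, and extending by zero to $\R$ as usual). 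Then Hölder's inequality $\||u|^4u\|_{L^2}\le\|u\|_{L^{12}}^4\|u\|_{L^6}$ together with the Sobolev embedding $\dot H^1\hookrightarrow L^6$ turns this bound into $C_1C_{\mathrm{So}}M\,\|u\|_{L^4_{I_m}L^{12}}^4$, where $C_1$ is the Strichartz constant and $C_{\mathrm{So}}$ the Sobolev constant. Consequently
\[\|S(\cdot)U(T_m)\|_{L^4_{b_m}L^{12}} \le \|u\|_{L^4_{I_m}L^{12}} + C_1C_{\mathrm{So}}M\,\|u\|_{L^4_{I_m}L^{12}}^4,\]
so it suffices to produce a partition on which $\|u\|_{L^4_{I_m}L^{12}}\le\eps$, where $\eps=\eps(\delta,M)\coloneqq\min\{\delta/2,(\delta/(2C_1C_{\mathrm{So}}M))^{1/4}\}$.

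The second step is the subdivision. Since $u\in L^4_TL^{12}$ with $\|u\|_{L^4_TL^{12}}\le M$ by Assumption \ref{Ass}, the function $\Phi(t)\coloneqq\int_0^t\|u(s)\|_{L^{12}}^4\dd s$ is continuous and nondecreasing on $[0,T]$ with $\Phi(0)=0$ and $\Phi(T)\le M^4$. I would construct the breakpoints greedily: set $T_0\coloneqq0$, and given $T_m<T$, stop with $T_{m+1}\coloneqq T$ if $\Phi(T)-\Phi(T_m)\le\eps^4$, and otherwise use the intermediate value theorem to choose $T_{m+1}\in(T_m,T)$ with $\Phi(T_{m+1})-\Phi(T_m)=\eps^4$. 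Every step but the last raises $\Phi$ by exactly $\eps^4$ while $\Phi$ stays below $M^4$, so the procedure terminates after $L\le\lfloor M^4/\eps^4\rfloor+1$ steps, yielding $0=T_0<T_1<\cdots<T_L=T$ with $b_m=T_{m+1}-T_m>0$ and $\|u\|_{L^4_{I_m}L^{12}}^4=\Phi(T_{m+1})-\Phi(T_m)\le\eps^4$ for all $m$. Combined with Step 1 this gives $\|S(\cdot)U(T_m)\|_{L^4_{b_m}L^{12}}\le\delta$, and since $\eps$ depends only on $\delta$ and $M$, so does $L$; in particular $L$ depends only on $\delta$, $M$, and $T$.

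I do not expect a genuine obstacle here. The only points requiring a little care are that the inhomogeneous Strichartz estimate must be used at the regularity level matching $(4,12,1)$ (so that after extracting $|\nabla|^{-1}$ the quantity $\|g(u)\|_{L^1L^2}$ appears, which is finite precisely because $u\in L^4_TL^{12}\cap L^\infty_T\dot H^1$), and that the greedy partition be arranged so each $b_m$ is strictly positive and the count $L$ is bounded purely in terms of $M/\eps$. One could instead bound $\|S(\cdot)U(T_m)\|_{L^4_{[0,b]}L^{12}}$ directly by $C_1\|U(T_m)\|_{\dot H^1\times L^2}\le C_1M$ via Theorem \ref{ThmStrich} and exploit absolute continuity in $b$; but that alone does not hand a partition count uniform over $m$, which is why routing the estimate through the solution's own (globally defined) Strichartz norm is the correct move.
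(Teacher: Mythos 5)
Your proposal is correct and follows essentially the same route as the paper's proof: both rearrange the Duhamel formula to express $S(\cdot)U(T_m)$ as $u(T_m+\cdot)$ minus the inhomogeneous term, bound the latter by $C M\|u\|_{L^4(I_m,L^{12})}^4$ via the $(4,12,1)$ Strichartz estimate with Hölder and Sobolev, and then choose the partition so that $\|u\|_{L^4(I_m,L^{12})}\le r$ with $r=\min\{\delta/2,(\delta/(2CM))^{1/4}\}$ and $L\lesssim M^4/r^4$. Your explicit greedy construction of the breakpoints via the intermediate value theorem is just a more detailed version of the step the paper states without proof.
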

	\begin{proof}
		Let $C$ be the constant of the Strichartz estimates from Theorem \ref{ThmStrich} with respect to the exponents $(4,12,1)$. We define
		\begin{equation}\label{Defr}
		r \coloneqq \min\Big\{\frac \delta 2, \Big(\frac{\delta}{2CM}\Big)^\frac14\Big\}.
		\end{equation}
		Since $\|u\|_{L^4_TL^{12}} \le M$ is finite, we can find times $0=T_0 < T_1 < \dots < T_L = T$, such that the inequality
		\[
		\|u\|_{L^4([T_m,T_{m+1}],L^{12})} \le r
		\]
		holds for all $m \in \{0,\dots,L-1\}$. Here we can choose $L = \lceil\|u\|^4_{L^4_TL^{12}}/r^4 \rceil.$
		Let now $m \in \{0,\dots,L-1\}$ and $b_m \coloneqq T_{m+1}-T_m >0$. Starting from \eqref{Duh1} and \eqref{Duh1lin}, Theorem \ref{ThmStrich} and \eqref{Defr} imply
		\begin{align*}	
		&\|S(\cdot)U(T_m)\|_{L^4_{b_m}L^{12}} \\
		&\le \|u(T_m+\cdot)\|_{L^4_{b_m}L^{12}} +  \Big\|\int_0^t |\nabla|^{-1} \sin((t-s)|\nabla|)g(u(T_m+s)) \dd s \Big\|_{L^4_{b_m}L^{12}} \\
		&\le r + C\||u|^4u\|_{L^1([T_m,T_{m+1}],L^2)} \le r + CM\|u\|^4_{L^4([T_m,T_{m+1}],L^{12})}\\
		&\le r + CMr^4 \le \delta. \qedhere
		\end{align*}
	\end{proof}
	
	We now show the global error bound for the critical case. We use ideas from the proof of Theorem 1.6 in \cite{ChoiKoh}, where similar arguments were used in the context of nonlinear Schrödinger equations, but only in the energy-subcritical case.
	The proof will be divided in three steps. In the first step, we define the needed variables and divide the interval $[0,T]$ into a finite number of subintervals, which are so small that we can apply Proposition \ref{PropLokStab} on each of them. In the second step, we first prove the convergence of the scheme in the $\dot H^1 \times L^2$-norm without any convergence rate. This fact ensures that the discrete approximation stays close to the solution in the $\dot H^1 \times L^2$-norm if $\tau$ is small enough. We can then apply Proposition \ref{PropLokStab} iteratively. Finally, in the last step, we estimate the error in the $L^2 \times \dot H^{-1}$-norm to obtain the convergence of order one.
	In contrast to Theorem \ref{Thm1}, the maximum step size $\tau_0$ will now not only depend on the size $M$ of the solution $u$ and on the end time $T$, but also on further properties of the solution $u$ to \eqref{NLW2}.

	\begin{proof}[Proof of Theorem \ref{ThmKrit}]
		\emph{Step 1.} Let $R\coloneqq \|U\|_{L^\infty([0,T],\dot H^1 \times L^2)} \le M < \infty$. We take $\delta_0 = \delta_0(R)$ given by Proposition \ref{PropLokStab}. 
		We define
		\begin{equation}\label{Defdelta}
		\delta \coloneqq \min\Big\{\delta_0,\frac{1}{\kappa(10C_\mathrm{So})^{\frac14}}\Big\},
		\end{equation}
		where $\kappa>0$ is the constant from Proposition \ref{PropLokStab} and  $C_\mathrm{So} >0$ is the norm of the Sobolev embedding $L^{6/5} \hookrightarrow \dot H^{-1}$.
		Lemma \ref{LemUklein} provides a number $L \in \N$ and times $0=T_0 < T_1 < \dots < T_L = T$ such that
		\begin{equation}\label{KritUKlein}
		\|S(\cdot)U(T_m)\|_{L^4_{b_m}L^{12}} \le \delta
		\end{equation}
		holds for all $m \in \{0,\dots,L-1\}$, where $b_m \coloneqq T_{m+1}-T_m >0$. Here, the number $L \in \N$ only depends on $M$ and $T$.
		We now define 
		\begin{equation}\label{Eps0}
		\eps \coloneqq \frac{\delta}{9\cdot 2^{L}}.
		\end{equation}
		By continuity of $U$, there is a number $\rho>0$ such that
		\begin{equation}\label{KritGlmStet}
		\|U(T_m)-U(t)\|_{\dot H^1\times L^2} \le \eps
		\end{equation}
		for all $m \in \{1,\dots,L\}$ and $t \in [0,T]$ with $|T_m-t|\le \rho$. We pick functions $Y^0, \dots, Y^L \in H^2\times H^1$ with
		\begin{equation}\label{Ys}
		\|Y^m-U(T_m)\|_{\dot H^1 \times L^2} \le \eps\le \frac\delta 2
		\end{equation} 
		for all $m \in \{0,\dots,L\}$.
		Due to Theorem \ref{ThmStrich}, we find a time $b_L>0$ such that 
		\begin{equation}\label{SUklein}
		\|S(\cdot)U(T)\|_{L^4_{b_L}L^{12}} \le \delta. 
		\end{equation}
		We define the maximal step size $\tau_0>0$ by
		\begin{equation}\label{KritTau0}
		\tau_0 \coloneqq \min\Big\{
		\frac \rho L,\frac{b_L}{L}, \min_{m=0,\dots,L} \frac{\eps}{C(Y^m)}, \min_{m=0,\dots,L}\bar \tau(\delta,U(T_m),b_m) \Big\} ,
		\end{equation}
		where the numbers $C(Y^m) = C(\|Y^m\|_{H^2\times H^1})$ and $\bar \tau(\delta,U(T_m),b_m)$ are taken from Proposition \ref{PropLokStab}. 
		
		Let $\tau \in (0,\tau_0]$. To decompose the interval, for any $m \in \{0,\dots,L\}$ we set
		\[N_m \coloneqq \sum_{j=0}^{m-1}\Big\lfloor \frac{b_j}{\tau}\Big\rfloor \in \N_0. \]
		The intervals $J_m$ are defined as $J_m\coloneqq[t_{N_m},t_{N_{m+1}}]$ if $m \in \{0,\dots,L-1\}$ and $J_L \coloneqq [t_{N_L},T]$.
		Hence, we have 
		\[[0,T] = \bigcup_{m=0}^{L}J_m.\]
		By construction, each subinterval $J_m$ is of length less or equal $b_m$. This also holds for the last interval $J_L$, because of
		\begin{equation}\label{t-T_m}
		T-t_{N_L} = \sum_{m=0}^{L-1}\Big(b_m-\Big\lfloor \frac{b_m}{\tau}\Big\rfloor\tau\Big) \le \sum_{m=0}^{L-1}\Big(b_m-\Big( \frac{b_m}{\tau}-1\Big)\tau\Big) = L\tau \le b_L, 
		\end{equation}
		where we use \eqref{KritTau0}.
		\smallskip
		
		\emph{Step 2.} We prove the convergence in the $\dot H^1 \times L^2$-norm, namely
		\begin{equation}\label{KonvH1}
		\|U(t_n)-U_n\|_{\ell^\infty_\tau([0,T],\dot H^1 \times L^2)} \to 0
		\end{equation}
		as $\tau \to 0$ (without any rate). To measure the error in each subinterval $J_m$, we define the error norms $\mathrm{Err}_m$ by $\mathrm{Err}_{-1}\coloneqq0$ and
		\begin{equation*}
		\mathrm{Err}_m \coloneqq \|U(t_n)-U_n\|_{\ell^\infty_\tau(J_m,\dot H^1 \times L^2)}, \quad m \in \{0,\dots,L\}.
		\end{equation*}
		Next, we show the recursion formula
		\begin{equation}\label{KritErrRekursion}
		\mathrm{Err}_{m} \le 2\mathrm{Err}_{m-1} + 9\eps, \quad m \in \{0,\dots,L\}
		\end{equation}
		via induction on $m$. First, let $m=0$. We introduce the notation $U(t,W^0) \coloneqq W(t)$, where $W$ is the solution of \eqref{NLW} with initial value $W^0$. Recall the definition of $\Phi_\tau$ in \eqref{LieFilt}. We get
		\begin{align*}
		\mathrm{Err}_0 &= \|U(t_n)-U_n\|_{\ell^\infty_\tau(J_0,\dot H^1 \times L^2)}\\  
		&\le \|U(t_n,U^0)-U(t_n,Y^0)\|_{\ell^\infty_\tau(J_0,\dot H^1 \times L^2)} \\
		&\quad + \|U(t_n,Y^0)-\Phi_\tau^n(\Pi_{\tau^{-1}} Y^0)\|_{\ell^\infty_\tau(J_0,\dot H^1 \times L^2)}\\
		&\quad + \|\Phi_\tau^n(\Pi_{\tau^{-1}} Y^0)-\Phi_\tau^n(\Pi_{\tau^{-1}} U^0)\|_{\ell^\infty_\tau(J_0,\dot H^1 \times L^2)}\\
		&\le 2\|U^0-Y^0\|_{\dot H^1 \times L^2} + C(Y^0)\tau + 2\|U^0-Y^0\|_{\dot H^1 \times L^2} \le 5\eps,
		\end{align*}
		using the estimates from Proposition \ref{PropLokStab} and the relations \eqref{KritUKlein}, \eqref{Ys} and \eqref{KritTau0}. 
		
		For the induction step $m-1 \leadsto m$, we first deduce from the induction assumption \eqref{KritErrRekursion} the inequality
		\[ \|U(t_{N_m})-U_{N_m}\|_{\dot H^1 \times L^2} \le \mathrm{Err}_{m-1} \le 9\eps\sum_{k=0}^{m-1}2^k = 9\eps(2^{m}-1) \le 9\eps(2^L-1). \]
		As in \eqref{t-T_m}, we obtain
		\begin{equation}\label{kleinerrho}
		|T_m - t_{N_m}| = \sum_{j=0}^{m-1} \Big(b_j-\Big\lfloor \frac{b_j}{\tau}\Big\rfloor\tau\Big) \le m\tau \le L\tau \le \rho, 
		\end{equation}
		using also \eqref{KritTau0}.
		Hence, \eqref{KritGlmStet} and \eqref{Eps0} imply
		\begin{align}\label{Ubleibtnah}
		\|&U(T_m)-U_{N_m}\|_{\dot H^1 \times L^2}\nonumber\\
		&\le \|U(T_m)-U(t_{N_m})\|_{\dot H^1 \times L^2} + \|U(t_{N_m})-U_{N_m}\|_{\dot H^1 \times L^2}\nonumber \\
		&\le \eps + 9\eps(2^L-1) \le 9\eps 2^L \le \delta.
		\end{align}
		So we can apply Proposition \ref{PropLokStab} (with $W^0 = U(T_m)$). Furthermore, we write
		\begin{align*}
		\mathrm{Err_m} &= \|U(t_n)-U_n\|_{\ell^\infty_\tau(J_m,\dot H^1 \times L^2)}\\
		&= \|U(t_n,U(t_{N_m}))-\Phi_\tau^n(U_{N_m})\|_{\ell^\infty_\tau([0,b_m],\dot H^1 \times L^2)}. 
		\end{align*}
		In the case $m=L$, we would have to replace the interval $[0,b_m]$ with the interval $[0,T-t_{N_L}]$ (which is smaller by \eqref{t-T_m}), but for simplicity we keep this abuse of notation.
		As noted after \eqref{LieFilt}, we have $U_n = \Pi_{\tau^{-1}} U_n$. Using also \eqref{KritGlmStet} and \eqref{kleinerrho}, we can now estimate similar as for $m=0$ and conclude
		\begin{align*}
		\mathrm{Err_m} &= \|U(t_n,U(t_{N_m}))-\Phi_\tau^n(U_{N_m})\|_{\ell^\infty_\tau([0,b_m],\dot H^1 \times L^2)}\\ 
		&\le \|U(t_n,U(t_{N_m}))-U(t_n,Y^m)\|_{\ell^\infty_\tau([0,b_m],\dot H^1 \times L^2)}\\
		&\quad + \|U(t_n,Y^m)-\Phi_\tau^n(\Pi_{\tau^{-1}} Y^m)\|_{\ell^\infty_\tau([0,b_m],\dot H^1 \times L^2)}\\
		&\quad + \|\Phi_\tau^n(\Pi_{\tau^{-1}} Y^m)-\Phi_\tau^n(\Pi_{\tau^{-1}} U_{N_m})\|_{\ell^\infty_\tau([0,b_m],\dot H^1 \times L^2)}\\
		&\le 2\|U(t_{N_m})-Y^m\|_{\dot H^1 \times L^2} + C(Y^m)\tau + 2\|Y^m-U_{N_m}\|_{\dot H^1 \times L^2}\\
		&\le 2\|U(t_{N_m})-Y^m\|_{\dot H^1 \times L^2} + \eps + 2\|Y^m-U(t_{N_m})\|_{\dot H^1 \times L^2}\\
		&\quad + 2\|U(t_{N_m})-U_{N_m}\|_{\dot H^1 \times L^2}\\
		&\le 4\|U(t_{N_m})-U(T_m)\|_{\dot H^1 \times L^2} + 4\|U(T_m)-Y^m\|_{\dot H^1 \times L^2} + \eps +  2\mathrm{Err}_{m-1}\\
		&\le  9\eps + 2\mathrm{Err}_{m-1}.
		\end{align*}
		Therefore, \eqref{KritErrRekursion} is true. It follows that
		\[\mathrm{Err}_m \le 9\eps \sum_{k=0}^m 2^k = 9\eps (2^{m+1}-1) \le 9\eps(2^{L+1}-1) \]
		for all $m \in \{0,\dots,L\}$. This also shows the convergence in the $\dot H^1 \times L^2$-norm as stated in \eqref{KonvH1}, since $L$ is independent of $\eps$, which we could have replaced by any $\tilde \eps \in (0,\eps]$. To complete the proof of Theorem \ref{ThmKrit}, we will actually not need the full statement of \eqref{KonvH1}. It is enough to know that $\|U(T_m)-U_{N_m}\|_{\dot H^1 \times L^2} \le \delta$ for all $m \in \{0,\dots,L\}$, as noted in \eqref{Ubleibtnah}. \smallskip
		
		\emph{Step 3.} We show the convergence of the scheme in the $L^2 \times\dot H^{-1} $-norm. Recall the notation $E_n = \Pi_{\tau^{-1}} U(t_n)-U_n$. Let $m \in \{0,\dots,L\}$. We use the recursion formula \eqref{FehlerRek} and estimate similar as in the proof of Proposition \ref{PropKonvSubkrit} to obtain
		\begin{align*}
		\|&E_{n}\|_{\ell^\infty_\tau(J_m,L^2 \times \dot H^{-1})} \\
		&= 
		\|E_{N_m+n}\|_{\ell^\infty_\tau([0,b_m],L^2 \times \dot H^{-1})}\\
		&\le \|e^{n\tau A}E_{N_m}\|_{\ell^\infty_\tau([0,b_m],L^2 \times \dot H^{-1})}\\
		&\quad + \Big\|\int_0^{t_n}\Pi_{\tau^{-1}} e^{(n\tau-s)A}B_{N_m}(s) \dd s\Big\|_{\ell^\infty_\tau([0,b_m],L^2 \times \dot H^{-1})}\\
		&\quad +  \Big\|\sum_{k=0}^{n-1}\Pi_{\tau^{-1}} e^{(n-k)\tau A}\Big(\Delta_{{N_m}+k} + \tau Q_{{N_m}+k}\Big)\Big\|_{\ell^\infty_\tau([0,b_m],L^2 \times \dot H^{-1})}\\
		&\le \|E_{N_m}\|_{L^2\times\dot H^{-1}} + \|B_{N_m}\|_{L^1_{b_m}(L^2 \times \dot H^{-1})}+ \tau^{-1}\|\Delta_{N_m+n}\|_{\ell^1_\tau([0,b_m],L^2 \times \dot H^{-1})} \\
		&\quad + \|Q_{N_m+n}\|_{\ell^1_\tau([0,b_m],L^2 \times \dot H^{-1})} \\
		&\le  \|E_{N_m}\|_{L^2\times\dot H^{-1}} + \|g(u)-g(\pi_{\tau^{-1}} u)\|_{L^1_T\dot H^{-1}}+ \tau^{-1} \|\Delta_n\|_{\ell^1_\tau([0,T],L^2 \times \dot H^{-1})} \\
		&\quad + C_\mathrm{So}\|g(\pi_{\tau^{-1}} u(t_{N_m+n}))-g(u_{N_m+n})\|_{\ell^1_\tau([0,b_m], L^{6/5})} \\
		&\le \|E_{N_m}\|_{L^2\times\dot H^{-1}} + 2C_F\tau+ \frac52 C_\mathrm{So}\Big(\|\pi_{\tau^{-1}} u(t_{N_m+n})\|^4_{\ell^4_\tau([0,b_m],L^{12})} \\
		&\quad +  \|u_{N_m+n}\|^4_{\ell^4_\tau([0,b_m],L^{12})}\Big)\|\pi_{\tau^{-1}} u(t_{N_m+n})-u_{N_m+n}\|_{\ell^\infty_\tau([0,b_m],L^{2})} \\
		&\le \|E_{N_m}\|_{L^2\times\dot H^{-1}} + 2C_F\tau+ 5 C_\mathrm{So}\kappa^4\delta^4\|E_n\|_{\ell^\infty_\tau(J_m,L^{2}\times \dot H^{-1})} \\
		&\le \|E_{N_m}\|_{L^2\times\dot H^{-1}} + 2C_F\tau + \frac{1}{2}\|E_n\|_{\ell^\infty_\tau(J_m,L^{2}\times \dot H^{-1})}.
		\end{align*}
		Here we used Lemma \ref{LemB} and \ref{LemLokF} (with constant $C_F$), the bounds from Proposition \ref{PropLokStab} and the definition of $\delta$ in \eqref{Defdelta}. We could apply Proposition \ref{PropLokStab} thanks to the estimates on $U(T_m)$ in \eqref{KritUKlein} and \eqref{SUklein}, on $U_{N_m}-U(T_m)$ in \eqref{Ubleibtnah}, and on $\tau$ in \eqref{KritTau0}.
		The above inequality in display leads to
		\begin{align*}
		\|E_{n}\|_{\ell^\infty_\tau(J_m,L^2 \times \dot H^{-1})} &\le 2\|E_{N_m}\|_{L^2\times\dot H^{-1}} + 4C_F\tau \\
		&\le 2\|E_{n}\|_{\ell^\infty_\tau(J_{m-1},L^2 \times \dot H^{-1})} +  4C_F\tau  
		\end{align*}
		if $m>0$. Since $E_0=0$,
		this recursion formula yields
		the global bound
		\[\|E_n\|_{\ell^\infty_\tau([0,T],L^2 \times \dot H^{-1})} \le 4C_F\tau \sum_{k=0}^{L}2^k = 4C_F(2^{L+1}-1)\tau. \]
		This shows the assertion since we can again use Lemma \ref{LemPi} as in the proof of Theorem \ref{Thm1}.
	\end{proof}
	
	\section{Corrected Lie Splitting}\label{SecKorr}
	In this section we only consider the cubic wave equation, i.e., the case $\alpha = 3$. For simplicity, we set $g(u)\coloneqq -\mu u^3$, one could similarly deal with $g(u)= -\mu |u|^2u$ in case of complex-valued functions. We prove an error estimate with order $3/2$ for a frequency-filtered variant of the corrected Lie splitting recently proposed in \cite{KleinGordon}. The original form of the method reads
	\[U_{n+1} = e^{\tau A} [U_n+\tau G(U_n) + \tau^2 \varphi_2(-2\tau A)H(U_n)], \] 
	where the operator $A$ and the nonlinearity $G$ are defined in \eqref{Defs}. We thus have added a correction term to the Lie splitting. Here we set
	\begin{equation}\label{Defphi}
	H(u,v) \coloneqq \begin{pmatrix}-g(u) \\ g'(u)v\end{pmatrix} \quad \text{and} \quad \varphi_2(t A)w \coloneqq \int_0^1 (1-\sigma)e^{\sigma t A} w \dd \sigma  
	\end{equation}
	for $t \in \R$.
	Since $v$ corresponds to $\partial_tu$, the nonlinear term $H$ now contains a derivative.
	The operator $\varphi_2(t A)$ can also be understood by the functional calculus for $A$ and the function $\varphi_2(z) = (e^z-z-1)/z^2$, which is bounded on $\iu \R$. The expression \eqref{WaveGroup} for the wave group $e^{tA}$ leads to the formula
	\[\varphi_2(tA) = \begin{pmatrix}
	\dfrac{1-\cos(t|\nabla|)}{t^2|\nabla|^2} & \dfrac{t|\nabla|-\sin(t|\nabla|)}{t^2|\nabla|^3} \\[2ex]
	\dfrac{\sin(t|\nabla|)-t|\nabla|}{t^2|\nabla|} & \dfrac{1-\cos(t|\nabla|)}{t^2|\nabla|^2}
	\end{pmatrix}.\]
	This implies a smoothing property for the operator $\varphi_2(tA)$. Using the functional calculus for $t|\nabla|$, we deduce the smoothing property 
	\begin{equation}\label{phi2Absch}
	\Big\|\varphi_2(tA)\begin{pmatrix}z \\ 0\end{pmatrix}\Big\|_{L^2 \times \dot H^{-1}} \lesssim |t|^{-1} \|z\|_{\dot H^{-1}}
	\end{equation}
	of $\varphi_2(tA)$ that is valid for all $z \in \dot H^{-1}$ and $t \neq 0$.
	
	As explained in \cite{KleinGordon}, the corrected Lie splitting is formally of second order, but, in contrast to classical second-order integrators for the wave equation, only first-order spatial derivatives appear in the local error. In \cite{KleinGordon}, a second-order error estimate in $H^1 \times L^2$ was shown for solutions $u$ with $H^{1+d/4}$ regularity, where $d$ denotes the spatial dimension. Since here, we only assume $\dot H^1$ regularity of the solution, we have to use the discrete Strichartz estimates from Section \ref{SecStrich} to deal with the error terms. Therefore, as in Section \ref{SecLie}, we include the frequency filter $\pi_K$ from \eqref{DefPi} in the scheme, which gives
	\begin{equation}\label{KorrLie}\begin{aligned}
	U_{n+1} &= e^{\tau A}\Pi_K[U_n + \tau G(U_n) + \tau^2 \varphi_2(-2\tau A)H(U_n) ],\\
	U_0 &= \Pi_K U^0. 
	\end{aligned}
	\end{equation}
	Here, we again set
	\[ \Pi_K \coloneqq \begin{pmatrix}\pi_K & 0 \\ 0 & \pi_K \end{pmatrix}. \]
	We write $u_n$ (resp.\ $v_n$) for the first (resp.\ second) component of $U_n$. In contrast to the previous sections, we do not set $K=\tau^{-1}$, since such a choice could only lead to an estimate of order one in $L^2 \times H^{-1}$, because the error coming from frequency truncation would dominate (of order $1/K$, see Lemma \ref{LemPi}). To get an improved convergence, we therefore have to choose some $K>\tau^{-1}$. As seen in Section \ref{SecStrich}, this leads to a multiplicative loss of the form $(K\tau)^{1/p}$ in the discrete Strichartz estimates. Therefore, we cannot reach the optimal order two for the scheme \eqref{KorrLie}. It turns out that with the choice
	\[ K \sim \tau^{-\frac32}\]
	we can optimize the global error and get an error bound of order $3/2$. See \cite{ORS} for a similar discussion in the context of Schrödinger equations.
	
	For the rest of this section, we fix  $K = \tau^{-3/2}$. As in case of the Lie splitting, we define the error terms
	\begin{equation}\label{DefErrorKorr}
	E_n \coloneqq \Pi_K U(t_n) - U_n 
	\end{equation}	
	for all $n \in \N_0$ with $t_n \le T$. For the first component of $E_n$ we write $e_n$. We first show an error recursion for \eqref{DefErrorKorr} which is similar to \eqref{FehlerRek}.
	\begin{Proposition}\label{PropFehlerRekKorr}
		Let $U$ and $T$ be given by Assumption \ref{Ass} with $\alpha=3$ and let $K=\tau^{-3/2}$. The error defined in \eqref{KorrLie} and \eqref{DefErrorKorr} then satisfies
		\begin{align}\label{FehlerRekKorr}
		E_{m+n} &= e^{n\tau A}E_m + \int_0^{t_n}\Pi_K e^{(n\tau-s)A}B_m(s) \dd s \nonumber\\
		&\quad+  \sum_{k=0}^{n-1}\Pi_K e^{(n-k)\tau A}\Big(\widetilde \Delta_{m+k} + \tau \widetilde Q_{m+k}\Big) 
		\end{align}
		for all $\tau \in (0,1]$, and $n, m \in \N_0$ with $t_{m+n} \le T$. The appearing terms are given by 
		\begin{equation}
		\begin{aligned}\label{DefTermeKorr}
		\widetilde \Delta_n &\coloneqq \Delta_n - \tau^2\varphi_2(-2\tau A) H(\Pi_KU(t_n)),\\
		\widetilde Q_n &\coloneqq Q_n + \tau\varphi_2(-2\tau A)\big[H(\Pi_K U(t_n))-H(U_n)\big], 
		\end{aligned}
		\end{equation}
		where $B_m(s)$, $\Delta_n$ and $Q_n$ are defined as in \eqref{DefTerme} with $\Pi_{\tau^{-1}}$ replaced by $\Pi_K$, and $H$ and $\varphi_2(tA)$ are introduced in \eqref{Defphi}.
	\end{Proposition}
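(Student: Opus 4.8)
The plan is to mirror the proof of Proposition~\ref{PropFehlerRek}, carrying the correction term along. First I would record a discrete Duhamel formula for the corrected scheme~(\ref{KorrLie}). Since $\Pi_K$, $e^{tA}$ and $\varphi_2(-2\tau A)$ are all Fourier multipliers, they commute with each other, and inductively $\Pi_K U_n = U_n$; hence one step of~(\ref{KorrLie}) can be written as $U_{n+1} = e^{\tau A}[U_n + \tau\Pi_K G(U_n) + \tau^2\varphi_2(-2\tau A)\Pi_K H(U_n)]$. Iterating this identity (a routine induction, exactly as for~(\ref{DiskrDuhamelMitM})) gives
\[ U_{m+n} = e^{n\tau A}U_m + \sum_{k=0}^{n-1}\Pi_K e^{(n-k)\tau A}\big[\tau G(U_{m+k}) + \tau^2\varphi_2(-2\tau A)H(U_{m+k})\big]. \]

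Next I would apply $\Pi_K$ to the continuous Duhamel formula for $U$,
\[ \Pi_K U(t_{m+n}) = e^{n\tau A}\Pi_K U(t_m) + \int_0^{t_n}\Pi_K e^{(n\tau-s)A}G(U(t_m+s))\dd s, \]
and subtract the discrete formula, producing the leading term $e^{n\tau A}E_m$. In the Duhamel integral I insert $G(\Pi_K U(t_m+s))$, which splits off $B_m(s) = G(U(t_m+s)) - G(\Pi_K U(t_m+s))$ and leaves $\int_0^{t_n}\Pi_K e^{(n\tau-s)A}G(\Pi_K U(t_m+s))\dd s$; splitting this over the subintervals $[t_k,t_{k+1}]$ rewrites it as $\sum_{k=0}^{n-1}\Pi_K e^{(n-k)\tau A}\int_0^\tau e^{-sA}G(\Pi_K U(t_{m+k}+s))\dd s$, just as in Proposition~\ref{PropFehlerRek}.

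Finally, inside the sum over $k$ I add and subtract $\tau G(\Pi_K U(t_{m+k}))$, which together with the integral produces $\Delta_{m+k}$ and, against the term $\tau G(U_{m+k})$, produces $\tau Q_{m+k}$; and I add and subtract $\tau^2\varphi_2(-2\tau A)H(\Pi_K U(t_{m+k}))$, which yields the contribution $-\tau^2\varphi_2(-2\tau A)H(\Pi_K U(t_{m+k}))$ to $\widetilde\Delta_{m+k}$ and, against the term $\tau^2\varphi_2(-2\tau A)H(U_{m+k})$, the contribution $\tau\cdot\tau\varphi_2(-2\tau A)[H(\Pi_K U(t_{m+k}))-H(U_{m+k})]$ to $\tau\widetilde Q_{m+k}$. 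Regrouping according to~(\ref{DefTermeKorr}) gives~(\ref{FehlerRekKorr}). There is no analytical difficulty here: the identity is purely algebraic and does not even use the specific value $K=\tau^{-3/2}$. The only point requiring some care is the bookkeeping of which intermediate terms are inserted, together with the commutation of the Fourier multipliers $\Pi_K$, $e^{tA}$, $\varphi_2(-2\tau A)$ and the inductive fact $\Pi_K U_n = U_n$.
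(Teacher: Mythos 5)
Your proposal is correct and follows essentially the same route as the paper: derive the discrete Duhamel formula for the corrected scheme, subtract it from $\Pi_K$ applied to the continuous Duhamel formula, and regroup the extra $\varphi_2$-terms by adding and subtracting $\tau^2\varphi_2(-2\tau A)H(\Pi_K U(t_{m+k}))$ to form $\widetilde\Delta_{m+k}$ and $\widetilde Q_{m+k}$. The bookkeeping, including the observation that $\Pi_K U_n = U_n$ and the commutation of the Fourier multipliers, matches the paper's argument.
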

	\begin{proof}
		For the discrete approximation $U_n$ defined by \eqref{KorrLie}, we have the discrete Duhamel formula
		\begin{equation*}\label{DiskrDuhamelKorr}
		U_{m+n} = e^{n\tau A}U_m + \tau \sum_{k=0}^{n-1}\Pi_K e^{(n-k)\tau A} \Big(G(U_{m+k})+\tau\varphi_2(-2\tau A)H(U_{m+k})\Big).
		\end{equation*}
		Proceeding as in Proposition \ref{PropFehlerRek}, we derive
		\begin{align*}
		E_{m+n} &=\Pi_K U(t_{m+n}) - U_{m+n}\\
		&= e^{n\tau A} (\Pi_K U(t_m)-U_m) + \int_0^{t_n} \Pi_K e^{(n\tau-s)A}G(U(t_m+s)) \dd s\\
		&\quad - \tau \sum_{k=0}^{n-1}\Pi_K e^{(n-k)\tau A} \Big(G(U_{m+k})+\tau\varphi_2(-2\tau A)H(U_{m+k})\Big)\\
		&= e^{n\tau A} E_m + \int_0^{t_n}\Pi_K e^{(n\tau-s)A}B_m(s) \dd s\\
		&\quad + \sum_{k=0}^{n-1}\Pi_K e^{(n-k)\tau A}\Big(\Delta_{m+k} + \tau Q_{m+k}\Big)\\
		&\quad - \tau^2 \sum_{k=0}^{n-1}\Pi_K e^{(n-k)\tau A} \varphi_2(-2\tau A)H(U_{m+k})\\
		&= e^{n\tau A} E_m + \int_0^{t_n}\Pi_K e^{(n\tau-s)A}B_m(s) \dd s\\
		&\quad +  \sum_{k=0}^{n-1}\Pi_K e^{(n-k)\tau A}\Big(\widetilde \Delta_{m+k} + \tau \widetilde Q_{m+k}\Big). \qedhere
		\end{align*}
	\end{proof}
	
	As we see in the next lemma, in the error formula \eqref{FehlerRekKorr} all second derivatives of $u$ cancel, only first-order derivatives remain. This is the advantage of the corrected Lie splitting in comparison with classical second-order integrators, such as the Strang splitting. We first deal with the local error $\widetilde \Delta_n$.
	
	\begin{Lemma}\label{LemLokFKorr}
		Let $U=(u,\partial_tu)$, $T$, and $M$ be given by Assumption \ref{Ass} with $\alpha=3$ and let $K=\tau^{-3/2}$. We then have the representation
		\begin{equation}\label{ErrReprKorr}
		\widetilde \Delta_n = \int_0^\tau(\tau-\sigma)e^{-2\sigma A} \int_0^\sigma e^{s A}\begin{pmatrix}0 \\ d_1(\tau,s,n)+d_2(\tau,s,n)\end{pmatrix} \dd s\dd \sigma, 
		\end{equation}
		where we abbreviate
		\begin{align*}
		d_1(\tau,s,n) &\coloneqq g''(\pi_K u(t_n+s))\big[(\pi_K \partial_t u(t_n+s))^2-(\nabla \pi_Ku(t_n+s))^2\big], \\
		d_2(\tau,s,n) &\coloneqq g'(\pi_K u(t_n+s))\pi_K g(u(t_n+s)).
		\end{align*}
		Here we use the notation $(\nabla f)^2 \coloneqq \sum_{j=1}^3 (\partial_j f)^2$. Moreover, the inequalities
		\begin{align*}
		\Big\|\sum_{k=0}^{N-1}\Pi_K e^{(N-k)\tau A}\widetilde \Delta_{m+k}\Big\|_{L^2\times \dot H^{-1}} &\lesssim_{M,T} \tau^{\frac32}, \\
		\Big\|\sum_{k=0}^{n-1} \pi_K S((n-k)\tau)\widetilde \Delta_{m+k}\Big\|_{\ell^p_{\tau,N}L^q} &\lesssim_{M,T,p,q} \tau^{\frac32(1-\gamma)-\frac{1}{2p}}
		\end{align*}
		hold for all $\tau \in (0,1]$, $N \in \N$, $m \in \N_0$ with $t_{m+N} \le T$, and wave admissible parameters $(p,q,\gamma)$. 
	\end{Lemma}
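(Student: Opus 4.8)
The plan is to establish the representation formula first by a Taylor-expansion / fundamental-theorem-of-calculus argument, exactly parallel to the proof of Lemma~\ref{LemLokF}, but carried one order further so that the correction term $\tau^2\varphi_2(-2\tau A)H(\Pi_KU(t_n))$ gets absorbed into a double integral. Starting from the expression for $\Delta_n$ in \eqref{ErrRepres} (with $\Pi_{\tau^{-1}}$ replaced by $\Pi_K$), I would write $\Delta_n$ as a second-order Taylor remainder: differentiate $\sigma\mapsto e^{-\sigma A}G(\Pi_KU(t_n+\sigma))$ once more and use the integral form of the remainder, which produces a kernel $(\tau-\sigma)$ and a second derivative $\tfrac{\mathrm d^2}{\mathrm d\sigma^2}[e^{-\sigma A}G(\Pi_KU(t_n+\sigma))]$. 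Expanding this second derivative via the product rule, one gets three kinds of contributions: the term $A^2 G(\cdots)$, a mixed term $-2A\,\tfrac{\mathrm d}{\mathrm d\sigma}G(\cdots)$, and the term $\tfrac{\mathrm d^2}{\mathrm d\sigma^2}G(\cdots)$. The key algebraic point is that the combination $A^2G - 2A\,\partial_\sigma G + \partial_\sigma^2 G$ is precisely what the correction term $\tau^2\varphi_2(-2\tau A)H$ is designed to cancel at leading order: the $\varphi_2(-2\tau A)$ factor supplies the $(\tau-\sigma)$ kernel and the $e^{-2\sigma A}$ propagator, and $H(\Pi_KU(t_n))$ matches $\partial_\sigma^2 G$ evaluated at $\sigma=0$ modulo terms that themselves vanish (because $\partial_\sigma$ of $g(u)$ in the second slot interacts with the equation $\partial_t^2 u = \Delta u + \text{(nonlinearity)}$). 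Carefully bookkeeping which second-order $x$-derivatives survive, one finds that the Laplacian acting on $u$ is replaced, via the PDE, by $\partial_t^2 u$ minus the nonlinearity, and the net surviving terms are exactly $d_1$ (the quadratic-in-first-derivatives term, with the characteristic sign difference between the time and space derivatives reflecting the wave operator) and $d_2$ (the nonlinearity fed back through $g'$). This is the step I expect to be the main obstacle: one must track the cancellations very precisely, keep the frequency cut-off $\pi_K$ in all the right places (it is what makes differentiating $g(\pi_K u)$ legitimate, since $\pi_K u$ is smooth), and correctly invoke $\partial_t^2 u = \Delta u - \mu|u|^{\alpha-1}u$ to trade the $\Delta$ for lower-order quantities; a sign error or a misplaced $\pi_K$ would invalidate the representation.

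Once \eqref{ErrReprKorr} is in hand, the two estimates follow from the discrete Strichartz machinery of Section~\ref{SecStrich} together with the Bernstein-type frequency-loss conversions. For the first ($L^2\times\dot H^{-1}$) bound, I would view $\sum_{k=0}^{N-1}\Pi_K e^{(N-k)\tau A}\widetilde\Delta_{m+k}$ as a discrete inhomogeneous wave sum and apply the dual homogeneous estimate \eqref{DualHom} (or equivalently \eqref{LPEst2}) with exponents $(p,q,\gamma)=(4,12,1)$, say; the forcing $\widetilde\Delta_{m+k}$ is an integral over $\sigma,s\in[0,\tau]$ of a propagated vector whose second component is $d_1+d_2$, so one pays a factor $\tau$ from the $\sigma$-integral (the $(\tau-\sigma)$ kernel), another $\tau$ from the $s$-integral, and then an $\ell^1_{\tau}$-norm of the $\dot H^{-1}$-norm of $d_1+d_2$, which is $O(\tau^{-1}\cdot\text{(Strichartz norms)})$ after summing $N\sim T/\tau$ terms; combined with the $(K\tau)^{1/p}=\tau^{-1/4}$ loss this should balance out to $\tau^{3/2}$. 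The $\dot H^{-1}$-norms of $d_1$ and $d_2$ are controlled by Hölder's inequality and Sobolev embeddings: $d_2=g'(\pi_K u)\pi_K g(u)$ is quadratic-times-cubic, i.e.\ quintic in $u$-type quantities, estimated in $L^{6/5}\hookrightarrow\dot H^{-1}$ using the discrete Strichartz norms of $\pi_K u$ from Proposition~\ref{PropDiskrStrichU}; $d_1$ involves $\nabla\pi_K u$ and $\pi_K\partial_t u$, each in $L^\infty_T L^2$ or better, times $g''(\pi_K u)\sim|u|$, again landing in $L^{6/5}$. A subtlety is that $d_1$ contains a $\nabla\pi_K u$, so one genuinely needs a derivative on $u$, which is only in $L^2$; this is precisely why the estimate loses $\tau^{1/2}$ relative to second order and why the endpoint estimate of Theorem~\ref{ThmEndp} (via Corollary~\ref{KorInhomCK}) enters — one puts the $L^\infty$-type factor on the other piece.

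For the second estimate, the $\ell^p_{\tau,N}L^q$ bound for general wave-admissible $(p,q,\gamma)$, I would apply the inhomogeneous discrete Strichartz estimate \eqref{DisStrich2} (in the sine/cosine form, i.e.\ the $S(\cdot)$ version), which costs $(K\tau)^{1/p}=\tau^{-3/(4p)}\cdot\tau^{1/(2p)}=\tau^{-1/(4p)}$... more carefully $(K\tau)^{1/p}=(\tau^{-1/2})^{1/p}=\tau^{-1/(2p)}$, together with a Bernstein loss $K^\gamma=\tau^{-3\gamma/2}$ to move from $\dot H^\gamma$ down to $\dot H^{-1}$ as in Remark~\ref{RemDisStr}(a); then the $\ell^1_\tau\dot H^{-1}$-norm of $\widetilde\Delta_{m+k}$ contributes a factor $\tau$ (from the double $\sigma,s$ integration this is $\tau^2\cdot\tau^{-1}$ after the $\ell^1_\tau$-sum, using $N\tau\le T$), yielding overall $\tau^{-1/(2p)}\cdot\tau^{-3\gamma/2}\cdot\tau^{2}\cdot\tau^{-1}\cdot(\text{bounded Strichartz norms})=\tau^{\frac32(1-\gamma)-\frac1{2p}}$ after collecting exponents, matching the claim. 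Throughout, the bounds on $\|\pi_K u(n\tau+\sigma)\|_{\ell^{p_\alpha}_{\tau,N}L^{3(\alpha-1)}}$ and on the $L^\infty_T(\dot H^1\times L^2)$-norm of $U$ from Proposition~\ref{PropDiskrStrichU} and Assumption~\ref{Ass} furnish all the needed uniform-in-$\tau$ control of the nonlinear factors, so the implicit constants depend only on $M$, $T$, and $(p,q)$ as stated. The only real work beyond routine Hölder bookkeeping is the representation formula; once that sign structure is pinned down, both inequalities are a direct application of the already-established time-discrete Strichartz estimates.
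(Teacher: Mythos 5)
Your plan for the two inequalities matches the paper's (logarithmic endpoint estimates for the $d_1$-part via Theorem \ref{ThmEndp} and Corollary \ref{KorInhomCK}, Theorem \ref{ThmDisStrich} plus Bernstein for the $d_2$-part, and Proposition \ref{PropDiskrStrichU} for the nonlinear factors), but your derivation of the representation formula --- the step you yourself flag as the main obstacle --- would fail as described. If you expand $\sigma\mapsto e^{-\sigma A}G(\Pi_KU(t_n+\sigma))$ to second order, the second derivative is $e^{-\sigma A}\big[A^2G-2A\partial_\sigma G+\partial_\sigma^2G\big]$, exactly the combination you write. For $G(U)=(0,g(u))$ this equals $\big({-2}\,\partial_\sigma g(\pi_Ku),\ \Delta g(\pi_Ku)+\partial_\sigma^2 g(\pi_Ku)\big)$, whose second component is $g''(\pi_Ku)\big[(\nabla\pi_Ku)^2+(\pi_K\partial_tu)^2\big]+g'(\pi_Ku)\,\pi_K\big[\Delta u+\partial_{tt}u\big]$, with \emph{plus} signs throughout, and whose first component does not vanish. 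Since $\Delta u+\partial_{tt}u$ is not the d'Alembertian, the PDE only replaces it by $2\Delta u+g(u)$: a genuine second spatial derivative of $u$ survives, which is not controllable for $\dot H^1$ data, and the sign in $d_1$ comes out wrong. The correction term cannot absorb this, because $\tau^2\varphi_2(-2\tau A)H(\Pi_KU(t_n))$ is \emph{not} the first Taylor coefficient $\tfrac{\tau^2}{2}H(\Pi_KU(t_n))$ of your expansion; the discrepancy between the two again involves $AH$, i.e.\ $\Delta g(\pi_Ku)$.

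The missing idea is the factorization $e^{-\sigma A}=e^{-2\sigma A}e^{+\sigma A}$, which is the whole point of the argument $-2\tau A$ in $\varphi_2$. One writes $\Delta_n=\int_0^\tau(\tau-\sigma)e^{-\sigma A}H(\Pi_KU(t_n+\sigma))\dd\sigma$ (Fubini applied to (\ref{ErrRepres})), notes that $\tau^2\varphi_2(-2\tau A)H(\Pi_KU(t_n))=\int_0^\tau(\tau-\sigma)e^{-2\sigma A}H(\Pi_KU(t_n))\dd\sigma$ by substitution, and obtains $\widetilde\Delta_n=\int_0^\tau(\tau-\sigma)e^{-2\sigma A}\big[e^{\sigma A}H(\Pi_KU(t_n+\sigma))-H(\Pi_KU(t_n))\big]\dd\sigma$. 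A single further application of the fundamental theorem of calculus to $s\mapsto e^{+sA}H(\Pi_KU(t_n+s))$ gives the derivative $e^{sA}\big[AH+\partial_sH\big]$: with the $+A$ the first components cancel exactly, and the second component is $-\Delta g(\pi_Ku)+g''(\pi_Ku)(\pi_K\partial_tu)^2+g'(\pi_Ku)\pi_K\partial_{tt}u=d_1+g'(\pi_Ku)\pi_K(\partial_{tt}u-\Delta u)=d_1+d_2$. No second derivative of $G$ is ever taken. A secondary inaccuracy: $d_1$ does not land in $L^{6/5}$ (that would require $\nabla u\in L^3$); it is only in $L^1$ spatially, with $g''(\pi_Ku)$ placed in $\ell^2_\tau L^\infty$ by the logarithmic endpoint estimate and the two derivative factors merely in $L^2$ each. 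Since $L^1\not\hookrightarrow\dot H^{-1}$, the first inequality must be closed with the dual endpoint estimate (\ref{EndpDual}) taking $\ell^2_\tau L^1$ data, not with (\ref{DualHom}) for $(4,12,1)$ as you suggest.
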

	\begin{proof}
		The error representation of the Lie splitting \eqref{ErrRepres} and Fubini's theorem yield
		\begin{align*}
		\Delta_n &= \int_0^\tau \int_0^s e^{-\sigma A}H(\Pi_KU(t_n+\sigma)) \dd \sigma\dd s \\
		&= \int_0^\tau (\tau-\sigma) e^{-\sigma A}H(\Pi_KU(t_n+\sigma)) \dd \sigma.  
		\end{align*}
		Moreover, the definition of $\varphi_2$ in \eqref{Defphi} and a substitution lead to
		\begin{align*}
		\tau^2 \varphi_2(-2\tau A)H(\Pi_K U(t_n)) &= \tau^2 \int_0^1 (1-\sigma)e^{-2\tau\sigma A} H(\Pi_K U(t_n)) \dd \sigma \\
		&= \int_0^\tau (\tau-\sigma)e^{-2\sigma A} H(\Pi_K U(t_n)) \dd \sigma.
		\end{align*}
		From \eqref{DefTermeKorr} it thus follows
		\begin{align*}
		\widetilde \Delta_n &= \Delta_n - \tau^2\varphi_2(-2\tau A) H(\Pi_KU(t_n))\\
		&= \int_0^\tau (\tau-\sigma)e^{-2\sigma A}\Big[e^{\sigma A}H(\Pi_KU(t_n+\sigma))-H(\Pi_K U(t_n))\Big] \dd \sigma \\
		&= \int_0^\tau (\tau-\sigma)e^{-2\sigma A} \int_0^\sigma \frac{\mathrm{d}}{\mathrm{d}s}\Big[e^{s A}H(\Pi_KU(t_n+s))\Big] \dd s \dd \sigma.
		\end{align*}	
		We compute the derivative by
		\begin{align*}
		&\frac{\mathrm{d}}{\mathrm{d}s}\Big[e^{s A}H(\Pi_KU(t_n+s))\Big]\\
		&= \frac{\mathrm{d}}{\mathrm{d}s}\Bigg[e^{s A}\begin{pmatrix}-g(\pi_K u(t_n+s)) \\ g'(\pi_K u(t_n+s))\pi_K \partial_t u(t_n+s)\end{pmatrix}\Bigg] \\
		&= e^{sA}\Bigg[\begin{pmatrix}0 & I \\ \Delta & 0 \end{pmatrix}\begin{pmatrix}-g(\pi_K u(t_n+s)) \\ g'(\pi_K u(t_n+s))\pi_K \partial_t u(t_n+s)\end{pmatrix} \\ 
		&\quad +\begin{pmatrix}-g'(\pi_K u(t_n+s))\pi_K\partial_tu(t_n+s) \\ g''(\pi_K u(t_n+s))(\pi_K \partial_t u(t_n+s))^2+g'(\pi_K u(t_n+s))\pi_K \partial_{tt} u(t_n+s) \end{pmatrix} \Bigg] \\
		&= e^{sA}\begin{pmatrix}0 \\ d_1(\tau,s,n)+g'(\pi_K u(t_n+s))\pi_K [\partial_{tt} u(t_n+s)-\Delta u(t_n+s)] \end{pmatrix} \\
		&= e^{sA}\begin{pmatrix}0 \\ d_1(\tau,s,n)+d_2(\tau,s,n) \end{pmatrix}, 
		\end{align*}
		where we used the formula $\Delta[g(w)] = g''(w)(\nabla w)^2 + g'(w) \Delta w$ and the fact that the differential equation \eqref{NLW2} holds in $C([0,T],H^{-1})$. Because of $\pi_K$ there is no problem in justifying the above differentiations. Thus, the error representation formula \eqref{FehlerRekKorr} is true.
		
		For the error estimates, we first prove the inequalities
		\begin{align}\label{d1Absch}
		\sup_{s \in [0,\tau]}\|d_1(\tau,s,n)\|_{\ell^2_{\tau,N-1}L^1} &\lesssim_{M,T} \tau^{-\frac14},\\
		\label{d2Absch}\sup_{s \in [0,\tau]}\|d_2(\tau,s,n)\|_{\ell^1_{\tau,N-1}\dot H^{-1}} &\lesssim_{M,T} 1.
		\end{align}
		Note that we have set $g(u) = -\mu u^3$, hence $g'(u) = -3\mu u^2$ and $g''(u) = -6\mu u$. We derive 
		\begin{align*}
		&\|d_1(\tau,s,n)\|_{\ell^2_{\tau,N-1}L^1}\\ 
		&\le \|g''(\pi_K u(t_n\!+s))\|_{\ell^2_{\tau,N-1}L^\infty}\|(\pi_K \partial_t u(t_n+s))^2-(\nabla \pi_Ku(t_n+s))^2\|_{\ell^\infty_{\tau,N-1}L^1} \\
		&\lesssim \|\pi_K u(t_n+s)\|_{\ell^2_{\tau,N-1}L^\infty}\Big(\|\pi_K \partial_t u\|^2_{L^\infty_TL^2}+\||\nabla \pi_Ku|\|^2_{L^\infty_TL^2}\Big) \\
		&\lesssim_{M,T} (K\tau+\log K)^{\frac12} \lesssim \tau^{-\frac14},
		\end{align*}
		uniformly in $s \in [0,\tau]$, where we used Proposition \ref{PropDiskrStrichU} and the relation $K = \tau^{-3/2}$. The second term is easier to estimate, we can directly infer
		\begin{align*}
		\|d_2(\tau,s,n)\|_{\ell^1_{\tau,N-1}\dot H^{-1}} &\lesssim \|g'(\pi_K u(t_n+s))\pi_K g(u(t_n+s))\|_{\ell^1_{\tau,N-1}L^{6/5}} \\
		&\lesssim_T \|g'(\pi_K u)\|_{L^\infty_TL^3}\|\pi_K g(u)\|_{L^\infty_TL^2}\\
		&\lesssim \|(\pi_K u)^2\|_{L^\infty_TL^3}\|u^3\|_{L^\infty_TL^2}\\
		&=\|\pi_Ku\|^2_{L^\infty_TL^6}\|u\|^3_{L^\infty_TL^6} \lesssim \|u\|^5_{L^\infty_T\dot H^1} \lesssim_M 1.
		\end{align*}
		This shows \eqref{d1Absch} and \eqref{d2Absch}. 
		
		Now we turn to the error estimates, starting with the $L^2 \times \dot H^{-1}$-norm. Formulas \eqref{ErrReprKorr} and \eqref{WaveGroup}, the endpoint estimate \eqref{EndpDual} and the inequalities \eqref{d1Absch} and \eqref{d2Absch} imply
		\begin{align*}
		\Big\|&\sum_{k=0}^{N-1}\Pi_K e^{(N-k)\tau A}\widetilde \Delta_{m+k}\Big\|_{L^2\times \dot H^{-1}} \\
		&\le \tau^3 \sup_{s \in [0,\tau]} \Big\|\sum_{k=0}^{N-1}\Pi_Ke^{-k\tau A}\begin{pmatrix}0 \\ d_1(\tau,s,m+k)+d_2(\tau,s,m+k)\end{pmatrix} \Big\|_{L^2\times \dot H^{-1}} \\
		&\lesssim \tau^2 \sup_{s \in [0,\tau]}\Big(\Big\|\tau\sum_{k=0}^{N-1} \pi_Ke^{\pm \iu k\tau |\nabla|} d_1(\tau,s,m+k) \Big\|_{\dot H^{-1}} \\
		&\quad + \|d_2(\tau,s,m+n)\|_{\ell^1_{\tau,N-1}\dot H^{-1}}\Big) \\
		&\lesssim_{M,T} \tau^2 \sup_{s \in [0,\tau]}\Big((K\tau+\log K)^{\frac12}\|d_1(\tau,s,m+n) \|_{\ell^2_{\tau,N-1}L^1} + 1\Big) \\
		&\lesssim_{M,T}\tau^{2-\frac14-\frac14} = \tau^{\frac32}.
		\end{align*}
		For the estimate in the $\ell^p_\tau L^q$-norm we can proceed similarly, where we now employ Theorem \ref{ThmDisStrich} for the $d_2$-term. Moreover, for the inequality involving $d_1$ we have to use Corollary \ref{KorInhomCK}, since we have non-$L^2$-based spaces in the spatial variable on both sides of the inhomogeneous Strichartz estimate. This gives
		\begin{align*}
		\Big\|&\sum_{k=0}^{n-1} \pi_K S((n-k)\tau)\widetilde \Delta_{m+k}\Big\|_{\ell^p_{\tau,N}L^q} \\
		&\le \tau^2 \sup_{s,\sigma \in [0,\tau]} \Big\|\tau\sum_{k=0}^{n-1} \pi_KS((n-k)\tau+s-2\sigma)\\
		&\qquad \qquad \qquad \cdot \begin{pmatrix}0 \\ d_1(\tau,s,m+k)+d_2(\tau,s,m+k)\end{pmatrix} \Big\|_{\ell^p_{\tau,N}L^q} \\
		&\lesssim_{p,q,T} \tau^2 (K\tau)^{\frac1p}K^\gamma\sup_{s \in [0,\tau]}\Big((K\tau+\log K)^{\frac12}\|d_1(\tau,s,m+n) \|_{\ell^2_{\tau,N-1}L^1} \\
		&\qquad \qquad \qquad \qquad \qquad \quad+ \|d_2(\tau,s,m+n)\|_{\ell^1_{\tau,N-1}\dot H^{-1}}\Big) \\
		&\lesssim_{M,T} \tau^{2-\frac{1}{2p}-\frac32 \gamma}\Big(\tau^{-\frac12}+1\Big) \lesssim \tau^{\frac32(1-\gamma)-\frac{1}{2p}},
		\end{align*}
		as desired. 
	\end{proof}
	
	We further need a product estimate in $\dot H^{-1}$.
	
	\begin{Lemma}\label{LemProd}
		The inequality
		\[ \|vw\|_{\dot H^{-1}} \lesssim \|v\|_{\dot H^1}\|w\|_{\dot H^{-\frac12}}  \]
		holds for all  $v \in \dot H^1$ and $w \in \dot H^{-\frac12} \cap L^1_{\mathrm{loc}}$.
	\end{Lemma}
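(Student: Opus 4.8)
The plan is to argue by duality and reduce the claim to a product estimate of the form $\dot H^1 \cdot \dot H^1 \hookrightarrow \dot H^{1/2}$ on $\R^3$. Since $1/2<3/2$, the space $\dot H^{1/2}$ is a Hilbert space and is in duality with $\dot H^{-1/2}$ via the $L^2$ pairing, and Schwartz functions with Fourier support in a compact subset of $\R^3\setminus\{0\}$ are dense in $\dot H^1$ (cf.\ the Notation section). Hence it suffices to bound $|\langle vw,\phi\rangle|$ for such test functions $\phi$. Using the identity $\langle vw,\phi\rangle=\langle w,v\phi\rangle$ — this is where the hypothesis $w\in L^1_{\mathrm{loc}}$ is used, to make sense of $vw$ as a tempered distribution and to justify the rewriting — together with the $\dot H^{-1/2}$–$\dot H^{1/2}$ duality, one gets
\[ |\langle vw,\phi\rangle| = |\langle w,v\phi\rangle| \le \|w\|_{\dot H^{-1/2}}\,\|v\phi\|_{\dot H^{1/2}} . \]
Taking the supremum over $\phi$ with $\|\phi\|_{\dot H^1}\le 1$, the assertion follows once we establish $\|v\phi\|_{\dot H^{1/2}}\lesssim\|v\|_{\dot H^1}\|\phi\|_{\dot H^1}$.

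For this product estimate I would invoke the fractional Leibniz (Kato--Ponce) inequality
\[ \||\nabla|^{1/2}(v\phi)\|_{L^2} \lesssim \||\nabla|^{1/2}v\|_{L^3}\,\|\phi\|_{L^6} + \|v\|_{L^6}\,\||\nabla|^{1/2}\phi\|_{L^3}, \]
and then bound each factor by Sobolev embeddings on $\R^3$: $\dot H^1\hookrightarrow L^6$ handles $\|\phi\|_{L^6}$ and $\|v\|_{L^6}$, while $\dot H^{1/2}\hookrightarrow L^3$ applied to $|\nabla|v$ gives $\||\nabla|^{1/2}v\|_{L^3}=\||\nabla|^{-1/2}|\nabla|v\|_{L^3}\lesssim\||\nabla|v\|_{L^2}=\|v\|_{\dot H^1}$, and likewise for $\phi$. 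Both terms are thus $\lesssim\|v\|_{\dot H^1}\|\phi\|_{\dot H^1}$. A self-contained alternative, avoiding a black-box Leibniz rule, is a Littlewood--Paley paraproduct decomposition $v\phi=\sum_j(P_{<j-2}v)(P_j\phi)+\sum_j(P_jv)(P_{<j-2}\phi)+\sum_{|j-k|\le 2}(P_jv)(P_k\phi)$ with the projections from Definition \ref{DefLP} and $P_{<j}\coloneqq\sum_{k<j}P_k$, estimating the frequency-localized pieces with Bernstein's inequality (the output of the $j$-th block of the first two sums has frequency $\sim 2^j$) and summing in $\ell^2$ via $\|v\|_{\dot H^1}^2\sim\sum_j 2^{2j}\|P_jv\|_{L^2}^2$; the balance $1+1-\tfrac12=\tfrac32$ (half the dimension) is exactly what makes the low–high and high–high sums converge.

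The only genuinely delicate point is the bookkeeping around the duality pairing: one must check that $vw$, a priori only a distribution, lies in $\dot H^{-1}$ (equivalently, that $\widehat{vw}\in L^1_{\mathrm{loc}}$) and that $\langle vw,\phi\rangle=\langle w,v\phi\rangle$. I would dispose of this by first proving the inequality for Schwartz $v$ and $w$, where everything is classical, and then passing to the limit using the density of such functions together with the bilinear bound just obtained, the $L^1_{\mathrm{loc}}$ hypothesis ensuring that the limiting object coincides with the pointwise product $vw$. Everything else is a routine application of Hölder's inequality and Sobolev embeddings on $\R^3$.
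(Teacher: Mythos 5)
Your proposal is correct and follows essentially the same route as the paper: duality reduces the claim to $\|v\phi\|_{\dot H^{1/2}}\lesssim\|v\|_{\dot H^1}\|\phi\|_{\dot H^1}$, which is then obtained from the Kato--Ponce fractional product rule with exactly the exponents $(r,p_1,q_2)=(2,3,6)$ and the Sobolev embeddings $\dot H^1\hookrightarrow L^6$, $\dot H^{1/2}\hookrightarrow L^3$. The additional remarks on density and the paraproduct alternative are sound but not needed beyond what the paper records.
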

	\begin{proof}
		We use the duality between $\dot H^s$ and $\dot H^{-s}$ for $|s| < 3/2$. We estimate
		\begin{align*}
		\|vw\|_{\dot H^{-1}} = \sup_{\substack{z \in \dot H^1 \\ \|z\|_{\dot H^1 \le 1}}} \Big| \int_{\R^3} vwz \dd x \Big| \le \sup_{\substack{z \in \dot H^1 \\ \|z\|_{\dot H^1 \le 1}}} \|vz\|_{\dot H^{\frac12}}\|w\|_{\dot H^{-\frac12}}. 
		\end{align*}
		Next, we use the fractional product rule
		\[\||\nabla|^a (vz)\|_{L^r} \lesssim \||\nabla|^a v\|_{L^{p_1}} \|z\|_{L^{q_2}} + \|v\|_{L^{p_1}} \||\nabla|^a z\|_{L^{q_2}} \]
		for $a \in (0,1)$ and $p_j$, $q_j$, $r \in (1,\infty)$ with $\frac{1}{p_1}+\frac{1}{q_1}=\frac{1}{p_2}+\frac{1}{q_2} = \frac{1}{r}$, see Proposition 3.3 in \cite{ChristWeinstein}. It follows
		\begin{align*}
		\|vz\|_{\dot H^{\frac12}} &\lesssim \||\nabla|^{\frac12}v\|_{L^3}\|z\|_{L^6} + \|v\|_{L^6}\||\nabla|^{\frac12}z\|_{L^3}\\
		&\lesssim \||\nabla|^{\frac12}v\|_{\dot H^{\frac12}}\|z\|_{\dot H^1} + \|v\|_{\dot H^1}\||\nabla|^{\frac12}z\|_{\dot H^{\frac12}} \\
		&\lesssim \|v\|_{\dot H^1} \|z\|_{\dot H^{1}},
		\end{align*}
		using also the Sobolev embeddings $\dot H^1 \hookrightarrow L^{6}$ and $\dot H^{\frac12} \hookrightarrow L^3$.
	\end{proof}
	
	To treat the error term $\widetilde Q_n$, we use \eqref{DefTermeKorr} to decompose
	\begin{equation}\label{QDec}
	\widetilde Q_n = \begin{pmatrix}0 \\ \mathbf q_1(\tau,n) \end{pmatrix} + \tau \varphi_2(-2\tau A) \begin{pmatrix} -\mathbf q_1(\tau,n) \\ \mathbf q_2(\tau,n) + \mathbf q_3(\tau,n) \end{pmatrix}
	\end{equation}
	with the definitions
	\begin{align*}
	\mathbf q_1(\tau,n) &\coloneqq g(\pi_K u (t_n))-g(u_n), \\
	\mathbf q_2(\tau,n) &\coloneqq g'(\pi_K u (t_n))[\pi_K \partial_tu (t_n)-v_n], \\
	\mathbf q_3(\tau,n) &\coloneqq v_n[g'(\pi_K u (t_n))-g'(u_n)],
	\end{align*}
	where $U_n=(u_n,v_n)$. This time we need to measure the error simultaneously in three different norms, namely
	\begin{equation}\label{DefNormKorr}
	\opnorm{E_n}_{\tau,j} \coloneqq \max\Big\{\tau^{-\frac32}\|E_n\|_{\ell^\infty_{\tau,j}(L^2 \times  \dot H^{-1})}, \tau^{-\frac14}\|e_n\|_{\ell^4_{\tau,j}L^{6}}, \tau^{\frac18}\|e_n\|_{\ell^4_{\tau,j}L^{12}}\Big\},
	\end{equation}
	where $j \in \N_0$ is a number with $j\tau \le T$. The rates are consistent with those obtained in Lemma \ref{LemLokFKorr}, since the parameters $(4,6,\frac34)$ and $(4,12,1)$ are wave admissible. 
	
	\begin{Lemma}\label{LemQ}
		Let $u$, $T$, and $M$ be given by Assumption \ref{Ass} with $\alpha=3$, and let $K=\tau^{-3/2}$. Define the error $E_n$ by \eqref{KorrLie} with $K=\tau^{-3/2}$ and \eqref{DefErrorKorr}.
		Then the estimates
		\begin{align*}
		\|\mathbf q_1(\tau,m+n)\|_{\ell^1_{\tau,j} \dot H^{-1}} &\lesssim_{M,T} t_{j+1}^{\frac12}\tau^{\frac32} \Big(1+\tau^{\frac12}\opnorm{E_{m+n}}^2_{\tau,j}\Big)\opnorm{E_{m+n}}_{\tau,j} \\
		\|\mathbf q_2(\tau,m+n)\|_{\ell^1_{\tau,j} \dot H^{-1}} &\lesssim_{M,T} t_{j+1}^{\frac12} \tau^{\frac12} \opnorm{E_{m+n}}_{\tau,j} \\	
		\|\mathbf q_3(\tau,m+n)\|_{\ell^2_{\tau,j} L^1} &\lesssim_{M,T} t_{j+1}^{\frac14} \tau\Big(\opnorm{E_{m+n}}_{\tau,j}+\opnorm{E_{m+n}}_{\tau,j}^3\Big)
		\end{align*}
		hold for all $\tau \in (0,1]$ and $m, j \in \N_0$ with $(m+j)\tau \le T$.
	\end{Lemma}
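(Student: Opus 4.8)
The plan is to reduce each estimate to a collection of elementary bounds by exploiting the explicit cubic structure. Using $g(v)-g(w)=-\mu(v-w)(v^{2}+vw+w^{2})$, $g'(v)-g'(w)=-3\mu(v-w)(v+w)$ and $g'(v)=-3\mu v^{2}$, and substituting $u_{n}=\pi_{K}u(t_{n})-e_{n}$ and $v_{n}=\pi_{K}\partial_{t}u(t_{n})-\tilde e_{n}$ (where $\tilde e_{n}$ is the second component of $E_{n}$, so $\|\tilde e_{m+n}\|_{\dot H^{-1}}\le\tau^{3/2}\opnorm{E_{m+n}}_{\tau,j}$ and $\|e_{m+n}\|_{L^{2}}\le\tau^{3/2}\opnorm{E_{m+n}}_{\tau,j}$), each $\mathbf q_{i}(\tau,n)$ becomes a finite sum of terms $a^{\ell}\cdot(\text{one, two or three error factors})$ with $a\in\{\pi_{K}u(t_{n}),\pi_{K}\partial_{t}u(t_{n})\}$. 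I would estimate every such term, sum over $n\in\{0,\dots,j\}$, and at the end fold the terms that are quadratic in $\opnorm{E_{m+n}}_{\tau,j}$ into the claimed linear-plus-cubic shape by Young's inequality (writing $x^{2}=\sqrt{x\cdot x^{3}}\le\tfrac12(\lambda x+\lambda^{-1}x^{3})$ with $\lambda$ chosen depending on $t_{j+1}$ and $\tau$). Since the implicit constants may depend on $T$ and $t_{j+1}\le T+1$, I may also replace a factor $t_{j+1}$ by $t_{j+1}^{1/2}$ whenever convenient.

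The individual terms are handled with four tools: (i) the Sobolev embeddings $\dot H^{1}\hookrightarrow L^{6}$, $L^{6/5}\hookrightarrow\dot H^{-1}$, $L^{3/2}\hookrightarrow\dot H^{-1/2}$ together with Hölder's inequality in space; (ii) the product estimate of Lemma \ref{LemProd}; (iii) the fact that $E_{n}=\Pi_{K}E_{n}$ (inductively from (\ref{FehlerRekKorr})), so $e_{n}$ and $\tilde e_{n}$ have Fourier support in $B(0,K)$ and Bernstein's inequality gives $\|\tilde e_{n}\|_{L^{2}}\lesssim K\|\tilde e_{n}\|_{\dot H^{-1}}$ and $\|\nabla\pi_{K}u(t_{n})\|_{L^{3}}\lesssim K^{1/2}\|u(t_{n})\|_{\dot H^{1}}$, where for $K=\tau^{-3/2}$ one has $K^{1/2}=\tau^{-3/4}$ and $K\tau^{3/2}=1$; (iv) the discrete Strichartz bounds for $\pi_{K}u(t_{m+n})$ from Proposition \ref{PropDiskrStrichU} with $K\tau=\tau^{-1/2}$, in particular $\|\pi_{K}u(t_{m+n})\|_{\ell^{\infty}_{\tau,j}L^{6}}\lesssim_{M}1$ and the endpoint bound $\|\pi_{K}u(t_{m+n})\|_{\ell^{2}_{\tau,j}L^{\infty}}\lesssim_{M,T}(K\tau+\log K)^{1/2}\lesssim\tau^{-1/4}$. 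After bounding a term pointwise in $n$ one sums over $n$ and repeatedly applies Cauchy--Schwarz (or Hölder) in the time index to pass from the $\ell^{1}_{\tau,j}$- or $\ell^{2}_{\tau,j}$-sum to the norms in $\opnorm{E_{m+n}}_{\tau,j}$, producing the stated powers of $t_{j+1}$ (for instance $\|e_{m+n}\|_{\ell^{1}_{\tau,j}L^{2}}\le t_{j+1}\|e\|_{\ell^{\infty}_{\tau,j}L^{2}}$ and $\|e_{m+n}\|_{\ell^{2}_{\tau,j}L^{6}}\le t_{j+1}^{1/4}\|e\|_{\ell^{4}_{\tau,j}L^{6}}$).

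For $\mathbf q_{1}$ I would write $\mathbf q_{1}=-\mu e_{n}\bigl(3(\pi_{K}u(t_{n}))^{2}-3\pi_{K}u(t_{n})e_{n}+e_{n}^{2}\bigr)$, bound the bracket in $L^{3}$ by $\lesssim_{M}1+\|e_{n}\|_{L^{6}}^{2}$, and get $\|\mathbf q_{1}\|_{\dot H^{-1}}\lesssim\|\mathbf q_{1}\|_{L^{6/5}}\lesssim_{M}\|e_{n}\|_{L^{2}}(1+\|e_{n}\|_{L^{6}}^{2})$; summing yields the linear part $\lesssim_{M,T}t_{j+1}^{1/2}\tau^{3/2}\opnorm{E_{m+n}}_{\tau,j}$ and the cubic part $\lesssim_{M,T}\|e\|_{\ell^{\infty}_{\tau,j}L^{2}}\|e\|_{\ell^{2}_{\tau,j}L^{6}}^{2}\lesssim t_{j+1}^{1/2}\tau^{2}\opnorm{E_{m+n}}_{\tau,j}^{3}$. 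For $\mathbf q_{3}=-3\mu v_{n}e_{n}(\pi_{K}u(t_{n})+u_{n})$, expanding and inserting $v_{n}$, $u_{n}$ gives four $L^{1}$-terms; the leading one $(\pi_{K}\partial_{t}u(t_{n}))e_{n}(\pi_{K}u(t_{n}))$ is bounded by $\|\pi_{K}\partial_{t}u\|_{L^{2}}\|e_{n}\|_{L^{2}}\|\pi_{K}u\|_{L^{\infty}}$, and using the endpoint Strichartz bound one gets $\lesssim_{M,T}\tau^{5/4}\opnorm{E_{m+n}}_{\tau,j}\le t_{j+1}^{1/4}\tau\opnorm{E_{m+n}}_{\tau,j}$, the remaining terms being treated analogously via $\|\tilde e_{n}\|_{L^{2}}\lesssim K\|\tilde e_{n}\|_{\dot H^{-1}}$ and $\|e_{n}\|_{L^{4}}\lesssim\|e_{n}\|_{L^{2}}^{1/4}\|e_{n}\|_{L^{6}}^{3/4}$. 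The main obstacle is $\mathbf q_{2}=-3\mu(\pi_{K}u(t_{n}))^{2}\tilde e_{n}$: since $\tilde e_{n}$ only lies in $\dot H^{-1}$, no Hölder product bound is available, so I would estimate by duality, $\|(\pi_{K}u(t_{n}))^{2}\tilde e_{n}\|_{\dot H^{-1}}\le\|\tilde e_{n}\|_{\dot H^{-1}}\sup_{\|z\|_{\dot H^{1}}\le1}\|(\pi_{K}u(t_{n}))^{2}z\|_{\dot H^{1}}$, where the product rule and Bernstein give $\|(\pi_{K}u(t_{n}))^{2}z\|_{\dot H^{1}}\lesssim_{M}\bigl(\|\pi_{K}u(t_{n})\|_{L^{\infty}}K^{1/2}+\|\pi_{K}u(t_{n})\|_{L^{\infty}}^{2}\bigr)\|z\|_{\dot H^{1}}$. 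The time summation then genuinely needs the endpoint $\ell^{2}_{\tau,j}L^{\infty}$-estimate for $\pi_{K}u$ (the single place where Theorem \ref{ThmEndp} enters, via Proposition \ref{PropDiskrStrichU}): Cauchy--Schwarz gives $\tau\sum_{n=0}^{j}\|\pi_{K}u(t_{m+n})\|_{L^{\infty}}K^{1/2}\lesssim_{M,T}K^{1/2}t_{j+1}^{1/2}\tau^{-1/4}=\tau^{-1}t_{j+1}^{1/2}$, whence $\|\mathbf q_{2}(\tau,m+n)\|_{\ell^{1}_{\tau,j}\dot H^{-1}}\lesssim_{M,T}\tau^{3/2}\opnorm{E_{m+n}}_{\tau,j}\cdot\tau^{-1}t_{j+1}^{1/2}=t_{j+1}^{1/2}\tau^{1/2}\opnorm{E_{m+n}}_{\tau,j}$.
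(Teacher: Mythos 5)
Your argument is correct, and it rests on the same three pillars as the paper's proof: the endpoint $\ell^2_\tau L^\infty$ Strichartz bound for $\pi_K u$ from Proposition \ref{PropDiskrStrichU} (hence Theorem \ref{ThmEndp}), Bernstein's inequality with $K=\tau^{-3/2}$, and Hölder in the time index to generate the powers of $t_{j+1}$; your occasional replacement of $t_{j+1}$ by $t_{j+1}^{1/2}$ and of $\tau^{1/4}$ by $t_{j+1}^{1/4}$ is legitimate because $\tau\le t_{j+1}\le T+1$ and the constants may depend on $T$. For $\mathbf q_1$ you reproduce the paper's argument, which simply invokes Lemma \ref{LemStab} with $\alpha=3$. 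For $\mathbf q_2$ the routes differ: the paper applies the product estimate of Lemma \ref{LemProd}, $\|vw\|_{\dot H^{-1}}\lesssim\|v\|_{\dot H^1}\|w\|_{\dot H^{-1/2}}$ (proved via the fractional Leibniz rule), and then pays the Bernstein factor $K^{1/2}$ to pass from $\dot H^{-1/2}$ back to $\dot H^{-1}$ on the error factor, whereas you dualize directly against $\dot H^1$ and pay the same $K^{1/2}$ on $\|\nabla\pi_K u\|_{L^3}$; this trades the fractional product rule for the classical Leibniz rule with identical numerology ($K^{1/2}\cdot\tau^{-1/4}\cdot\tau^{3/2}=\tau^{1/2}$), and the quadratic term $\|\pi_Ku\|_{L^\infty}^2$, which you do not display explicitly, sums to $\tau^{-1/2}$ and is dominated. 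For $\mathbf q_3$ the paper keeps the factorized form $v_n\,(g'(\pi_Ku(t_n))-g'(u_n))$ and works with the $\ell^4_{\tau,j}L^{12}$ component of $\opnorm{\cdot}_{\tau,j}$ together with the embedding $\dot H^{1/4}\hookrightarrow L^{12/5}$, while you expand $v_ne_n(\pi_Ku(t_n)+u_n)$ into elementary products and use $L^2$--$L^4$--$L^6$--$L^\infty$ Hölder combinations, interpolation $\|e_n\|_{L^4}\le\|e_n\|_{L^2}^{1/4}\|e_n\|_{L^6}^{3/4}$, and Young's inequality to fold the quadratic contributions into the linear-plus-cubic shape; notably your version never uses the $\ell^4L^{12}$ component of the triple norm (which of course remains indispensable elsewhere, e.g.\ in Lemma \ref{LemLokFKorr} and the induction of Proposition \ref{PropKonvKorr}). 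Both approaches yield the stated bounds.
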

	\begin{proof}
		For simplicity, we set $m=0$ in the proof of these three estimates, since the shift by $m>0$ does not affect the argument. The first estimate is the same as in Lemma \ref{LemStab}, now with $\alpha=3$, implying
		\begin{align*}
		\|\mathbf q_1(\tau,n)\|_{\ell^1_{\tau,j} \dot H^{-1}} &= \|g(\pi_K u(t_{n}))-g(u_{n})\|_{\ell^1_{\tau,j}\dot H^{-1}}\\
		&\lesssim_{M,T} t_{j+1}^{\frac12}\Big(1+\|e_{n}\|^{2}_{\ell^4_{\tau,j}L^{6}}\Big)\|e_{n}\|_{\ell^\infty_{\tau,j}L^2}\\
		&\le t_{j+1}^{\frac12}\tau^{\frac32} \Big(1+\tau^{\frac12}\opnorm{E_{n}}^2_{\tau,j}\Big)\opnorm{E_{n}}_{\tau,j}.
		\end{align*}
		For the second inequality, we use the product estimate from Lemma \ref{LemProd} and note that $g'(u) = -3\mu u^2$. We compute
		\begin{align*}
		\|&\mathbf q_2(\tau,n)\|_{\ell^1_{\tau,j} \dot H^{-1}} \\
		&\lesssim \|g'(\pi_K u (t_n))\|_{\ell^1_{\tau,j}\dot H^1}\|\pi_K \partial_tu (t_n)-v_n\|_{\ell^\infty_{\tau,j}\dot H^{-\frac12}}\\
		&\lesssim \|\pi_K u (t_n)\|_{\ell^2_{\tau,j}L^\infty} \|\pi_K u (t_n)\|_{\ell^2_{\tau,j}\dot H^1}\tau^{-\frac32\cdot\frac12}\|\pi_K \partial_tu (t_n)-v_n\|_{\ell^\infty_{\tau,j}\dot H^{-1}} \\
		&\lesssim_{M,T} t_{j+1}^{\frac12}\tau^{-\frac14+\frac34}\|\pi_K u (t_n)\|_{\ell^\infty_{\tau,j}\dot H^1}\opnorm{E_n}_{\tau,j} \\
		&\lesssim_M t_{j+1}^{\frac12}\tau^{\frac12}\opnorm{E_n}_{\tau,j},
		\end{align*}
		employing also Proposition \ref{PropDiskrStrichU} for the estimate in $\ell^2_\tau L^\infty$ and Bernstein's inequality to change from $\dot H^{-1/2}$ to $\dot H^{-1}$. Recall that $E_n = \Pi_K E_n$  and $K = \tau^{-3/2}$. The third inequality is shown similarly by
		\begin{align*}
		\|&\mathbf q_3(\tau,n)\|_{\ell^2_{\tau,j} L^1} \\
		&\lesssim \|v_n\|_{\ell^\infty_{\tau,j}L^2}\|g'(\pi_K u (t_n))-g'(u_n)\|_{\ell^2_{\tau,j} L^2} \\
		&\lesssim_M \Big(1 + \|\pi_K\partial_t u(t_n)-v_n\|_{\ell^\infty_{\tau,j}L^2}\Big)   \Big(\|\pi_Ku(t_n)\|_{\ell^2_{\tau,j}L^{12}}+\|u_n\|_{\ell^2_{\tau,j}L^{12}}\Big)\\
		&\quad \cdot\|e_n\|_{\ell^\infty_{\tau,j}L^\frac{12}{5}} \\
		&\lesssim \Big(1+\tau^{-\frac32}\|\pi_K\partial_t u(t_n)-v_n\|_{\ell^\infty_{\tau,j}\dot H^{-1}}\Big)   t_{j+1}^{\frac14} \Big(\|\pi_Ku(t_n)\|_{\ell^{4}_{\tau,j}L^{12}}+\|u_n\|_{\ell^{4}_{\tau,j}L^{12}}\Big)\\
		&\quad \cdot\|e_n\|_{\ell^\infty_{\tau,j}\dot H^{\frac14}} \\
		&\lesssim_{M,T} t_{j+1}^{\frac14} \Big(1+\opnorm{E_n}_{\tau,j})\tau^{-\frac18}\Big(1+\opnorm{E_n}_{\tau,j}\Big)\tau^{\frac{3}{2}(1-\frac14)}\opnorm{E_n}_{\tau,j} \\
		&\lesssim t_{j+1}^{\frac14} \tau\Big(\opnorm{E_n}_{\tau,j}+\opnorm{E_n}_{\tau,j}^3\Big),
		\end{align*}
		where also the Sobolev embedding $\dot H^{1/4} \hookrightarrow L^{12/5}$ was used.
	\end{proof}

	We can now estimate the remaining part of the error formula \eqref{FehlerRekKorr}.
	
	\begin{Lemma}\label{LemStabKorr}
		Let $U=(u,\partial_tu)$, $T$, and $M$ be given by Assumption \ref{Ass} with $\alpha=3$, and let $K=\tau^{-3/2}$. Define the error by \eqref{KorrLie} with $K=\tau^{-3/2}$ and \eqref{DefErrorKorr}.
		Then the inequality
		\begin{align*}
		\opnorm[\Big]{&\tau\sum_{k=0}^{n-1}\Pi_Ke^{(n-k)\tau A}\widetilde Q_{m+k}}_{\tau,j+1}\\
		&\lesssim_{M,T}  \max\{t_{j+1}^{\frac14},t_{j+1}^{\frac12}\}\Big(\opnorm{E_{m+n}}_{\tau,j}+\tau^{\frac14}  \opnorm{E_{ m+n}}^3_{\tau,j}\Big) 
		\end{align*}
		holds for all $\tau \in (0,1]$ and $m, j \in \N_0$ with $(m+j+1)\tau \le T$. 
	\end{Lemma}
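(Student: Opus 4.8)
The plan is to feed the decomposition (\ref{QDec}) into the left-hand side and estimate the three resulting groups of terms separately in each of the three norms defining $\opnorm{\cdot}_{\tau,j+1}$. Fixing $m$, write $R^{(1)}_k \coloneqq (0,\mathbf q_1(\tau,m+k))^T$, $R^{(2)}_k \coloneqq \tau\varphi_2(-2\tau A)(-\mathbf q_1(\tau,m+k),\mathbf q_2(\tau,m+k))^T$ and $R^{(3)}_k \coloneqq \tau\varphi_2(-2\tau A)(0,\mathbf q_3(\tau,m+k))^T$, and set $S^{(i)}_n \coloneqq \tau\sum_{k=0}^{n-1}\Pi_K e^{(n-k)\tau A}R^{(i)}_k$. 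For the $\ell^\infty_{\tau,j+1}(L^2\times\dot H^{-1})$-part I would use that $e^{(n-k)\tau A}$ is an isometry on $L^2\times\dot H^{-1}$ and that the $\pi_K$ are contractions there, so that the triangle inequality reduces everything to $\ell^1_\tau$-sums of $\|R^{(i)}_k\|_{L^2\times\dot H^{-1}}$. For the $\ell^4_{\tau,j+1}L^6$- and $\ell^4_{\tau,j+1}L^{12}$-parts (which only see the first coordinate of $S^{(i)}_n$) I would split $\cos((n-k)\tau|\nabla|)$ and $|\nabla|^{-1}\sin((n-k)\tau|\nabla|)$ into the half-wave propagators $e^{\pm\iu(n-k)\tau|\nabla|}$ and apply the discrete inhomogeneous Strichartz estimate (\ref{DisStrich2}) with the wave-admissible triples $(4,6,\tfrac34)$ and $(4,12,1)$; since the operator in (\ref{DisStrich2}) carries $\pi_K$, one may replace each source term by its $\pi_K$-truncation and then invoke Bernstein's inequality (Remark \ref{RemDisStr}a)) to raise its regularity at the cost of a factor $K^{\gamma}$ (or $K^{\gamma+1}$), which is harmless since $K=\tau^{-3/2}$ is only polynomially large.

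Carrying this out, $S^{(1)}$ is immediate: the energy estimate gives $\|S^{(1)}_n\|_{L^2\times\dot H^{-1}}\le\|\mathbf q_1\|_{\ell^1_{\tau,j}\dot H^{-1}}$, and (\ref{DisStrich2}) together with Bernstein gives $\|S^{(1)}_n\|_{\ell^4_{\tau,j+1}L^q}\lesssim_{p,q} (K\tau)^{1/4}K^{\gamma}\|\mathbf q_1\|_{\ell^1_{\tau,j}\dot H^{-1}}$ for $(q,\gamma)\in\{(6,\tfrac34),(12,1)\}$, so after multiplication by the weights in (\ref{DefNormKorr}) and inserting $K=\tau^{-3/2}$ one obtains $\opnorm{S^{(1)}}_{\tau,j+1}\lesssim\tau^{-3/2}\|\mathbf q_1\|_{\ell^1_{\tau,j}\dot H^{-1}}$. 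For $S^{(2)}$ I would expand the matrix entries of $\varphi_2(-2\tau A)$ from the formula displayed in Section \ref{SecKorr} and exploit their smoothing behaviour: the entries $\tfrac{1-\cos(2\tau|\nabla|)}{|\nabla|^2}$ and $\tfrac{2\tau|\nabla|-\sin(2\tau|\nabla|)}{|\nabla|^3}$ gain regularity (they map $\dot H^{-1}$ into $\dot H^{3/4}$, resp.\ $\dot H^1$, with operator norms that are positive powers of $\tau$), whereas $\tfrac{2\tau|\nabla|-\sin(2\tau|\nabla|)}{|\nabla|}$ is merely bounded by $\sim\tau$ on $\dot H^s$, so for that $\mathbf q_1$-contribution one again uses $\pi_K$ and Bernstein. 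Together with the smoothing bound (\ref{phi2Absch}) for the first column and the uniform-in-$\tau$ boundedness of the second column of $\varphi_2(-2\tau A)$ on $L^2\times\dot H^{-1}$, this yields $\opnorm{S^{(2)}}_{\tau,j+1}\lesssim\tau^{-3/2}\|\mathbf q_1\|_{\ell^1_{\tau,j}\dot H^{-1}}+\tau^{-1/2}\|\mathbf q_2\|_{\ell^1_{\tau,j}\dot H^{-1}}$. Finally, since $\mathbf q_3$ is only controlled in $\ell^2_{\tau}L^1$, here I would first rewrite $\tau\varphi_2(-2\tau A)(0,h)^T=\tfrac1\tau\int_0^\tau(\tau-\sigma)e^{-2\sigma A}(0,h)^T\,\dd\sigma$ (from the definition of $\varphi_2$), interchange the $\sigma$-integral with the $k$-sum, use unitarity of $e^{\iu t|\nabla|}$ to absorb the shift $-2\sigma$, and then feed the resulting sums into Corollary \ref{KorInhomCK} for the $\ell^4L^q$-parts and into the dual endpoint estimate (\ref{EndpDual}) for the $L^2\times\dot H^{-1}$-part; this gives $\opnorm{S^{(3)}}_{\tau,j+1}\lesssim\tau^{-3/4}\|\mathbf q_3\|_{\ell^2_{\tau,j}L^1}$.

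It then remains to insert the bounds of Lemma \ref{LemQ}: one has $\tau^{-3/2}\|\mathbf q_1\|_{\ell^1_{\tau,j}\dot H^{-1}}\lesssim_{M,T} t_{j+1}^{1/2}(1+\tau^{1/2}\opnorm{E_{m+n}}^2_{\tau,j})\opnorm{E_{m+n}}_{\tau,j}$, $\tau^{-1/2}\|\mathbf q_2\|_{\ell^1_{\tau,j}\dot H^{-1}}\lesssim_{M,T} t_{j+1}^{1/2}\opnorm{E_{m+n}}_{\tau,j}$, and $\tau^{-3/4}\|\mathbf q_3\|_{\ell^2_{\tau,j}L^1}\lesssim_{M,T} t_{j+1}^{1/4}\tau^{1/4}(\opnorm{E_{m+n}}_{\tau,j}+\opnorm{E_{m+n}}^3_{\tau,j})$; adding these and using $\tau^{1/2}\le\tau^{1/4}$ for $\tau\le1$ produces exactly the asserted bound. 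The main obstacle is the bookkeeping in $S^{(2)}$: one must balance the $(K\tau)^{1/p}$-loss of the discrete Strichartz estimates (with $K=\tau^{-3/2}$) against the order of smoothing of each kernel entry of $\varphi_2(-2\tau A)$ and against the fact that $\mathbf q_1$ is not frequency-localised — a crude Bernstein factor $K^{\gamma+1}$ on the $\mathbf q_1$-part of the first coordinate of $\varphi_2(-2\tau A)(-\mathbf q_1,\mathbf q_2)$ would be too lossy and yield a negative power of $\tau$, so the $|\nabla|^{-2}$-smoothing built into the diagonal entries has to be used genuinely; the $\mathbf q_3$-term is delicate in the complementary way, since controlling it without losing a factor $K^{1/2}$ forces the use of the logarithmic endpoint estimates of Section \ref{SecStrich}.
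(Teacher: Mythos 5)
Your proposal is correct and follows essentially the same route as the paper: the decomposition (\ref{QDec}), the smoothing bound (\ref{phi2Absch}) for the $\mathbf q_1$-part of $\varphi_2(-2\tau A)$, the discrete Strichartz estimates of Theorem \ref{ThmDisStrich} combined with Bernstein for the $\ell^4L^6$- and $\ell^4L^{12}$-components, the endpoint estimate (\ref{EndpDual}) together with Corollary \ref{KorInhomCK} for the $\mathbf q_3$-term, and finally Lemma \ref{LemQ}, with all exponents matching. The only imprecision is the parenthetical claim that the diagonal entries of $\varphi_2(-2\tau A)$ map $\dot H^{-1}$ into $\dot H^{3/4}$ resp.\ $\dot H^1$ with operator norms given by positive powers of $\tau$ (they do not), but this is immaterial because your operative estimate for $S^{(2)}$ is exactly the paper's combination of (\ref{phi2Absch}) with Strichartz and Bernstein.
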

	\begin{proof}
		We first estimate in the $\ell^\infty_\tau(L^2 \times \dot H^{-1})$-norm by means of the decomposition \eqref{QDec}, obtaining
		\begin{align}\label{QtildeinL2H-1}
		\Big\|&\tau\sum_{k=0}^{n-1}\Pi_Ke^{(n-k)\tau A}\widetilde Q_{m+k}\Big\|_{\ell^\infty_{\tau,j+1}(L^2 \times \dot H^{-1})} \nonumber\\
		&\le
		\|\mathbf q_1(\tau,m+n)\|_{\ell^1_{\tau,j}\dot H^{-1}} + \Big\|\tau \varphi_2(-2\tau A) \begin{pmatrix} \mathbf q_1(\tau,m+n) \\ 0 \end{pmatrix}\Big\|_{\ell^1_{\tau,j}(L^2 \times \dot H^{-1})}  \nonumber\\
		&\quad + \tau\| \mathbf q_2(\tau,m+n)\|_{\ell^1_{\tau,j}\dot H^{-1}}\nonumber\\
		&\quad+  \Big\|\tau^2\sum_{k=0}^{n-1}\Pi_Ke^{-k\tau A}\begin{pmatrix} 0 \\ \mathbf q_3(\tau,m+k) \end{pmatrix}\Big\|_{\ell^\infty_{\tau,j+1}(L^2 \times \dot H^{-1})} \nonumber\\
		&\lesssim_T \|\mathbf q_1(\tau,m+n)\|_{\ell^1_{\tau,j}\dot H^{-1}} + \tau\| \mathbf q_2(\tau,m+n)\|_{\ell^1_{\tau,j}\dot H^{-1}}\nonumber\\
		&\quad\quad + \tau^{\frac34} 	\| \mathbf q_3(\tau,m+n)\|_{\ell^2_{\tau,j} L^1} \nonumber\\
		&\lesssim_{M,T} t_{j+1}^{\frac12}\tau^{\frac32} \Big(1+\tau^{\frac12}\opnorm{E_{m+n}}^2_{\tau,j}\Big)\opnorm{E_{m+n}}_{\tau,j} + t_{j+1}^{\frac12} \tau^{\frac32} \opnorm{E_{m+n}}_{\tau,j} \nonumber\\
		&\quad\quad\quad + t_{j+1}^{\frac14} \tau^{\frac74}\Big(\opnorm{E_{m+n}}_{\tau,j}+\opnorm{E_{m+n}}_{\tau,j}^3\Big) \nonumber\\
		&\lesssim \tau^{\frac32}\max\{t_{j+1}^{\frac14},t_{j+1}^{\frac12}\}\Big(\opnorm{E_{m+n}}_{\tau,j}+\tau^{\frac14}  \opnorm{E_{m+n}}^3_{\tau,j}\Big).
		\end{align}
		Here we use inequality \eqref{phi2Absch} for the second term involving $\mathbf q_1$ and the endpoint estimate \eqref{EndpDual} for the term involving $\mathbf q_3$. In the end, the bounds from Lemma \ref{LemQ} were applied.
		
		Let now $(p,q,\gamma)$ be wave admissible. The definition of $\varphi_2$ in \eqref{Defphi} and Corollary \ref{KorInhomCK} lead to
		\begin{align*}
		\Big\|&\tau^2\sum_{k=0}^{n-1} \pi_KS((n-k)\tau) \varphi_2(-2\tau A) \begin{pmatrix} 0 \\ \mathbf q_3(\tau,m+k) \end{pmatrix} \Big\|_{\ell^p_{\tau,j+1}L^q} \\
		&= \Big\|\int_0^1 (1-\sigma)\tau^2\sum_{k=0}^{n-1} \pi_KS((n-k-2\sigma)\tau) \begin{pmatrix} 0 \\ \mathbf q_3(\tau,m+k) \end{pmatrix} \dd \sigma \Big\|_{\ell^p_{\tau,j+1}L^q}  \\ 
		&\le \sup_{\sigma \in [0,1]} \Big\|\tau^2\sum_{k=0}^{n-1} \pi_KS((n-k-2\sigma)\tau) \begin{pmatrix} 0 \\ \mathbf q_3(\tau,m+k) \end{pmatrix} \Big\|_{\ell^p_{\tau,j+1}L^q} \\
		&\lesssim_{p,q,T} \tau(K\tau)^{\frac1p} K^\gamma (K\tau + \log K)^{\frac12} \|\mathbf q_3(\tau,m+n)\|_{\ell^2_{\tau,j}L^1} \\
		&\lesssim \tau^{\frac34-\frac{1}{2p}-\frac32\gamma} \|\mathbf q_3(\tau,m+n)\|_{\ell^2_{\tau,j}L^1}.
		\end{align*} 
		The other terms can be estimated in $\ell^p_\tau L^q$ as in \eqref{QtildeinL2H-1}, now using Theorem \ref{ThmDisStrich}. Altogether, we obtain
		\begin{align*}
		&\tau^{\frac{1}{2p}+\frac32\gamma}\Big\|\tau\sum_{k=0}^{n-1} \pi_KS((n-k)\tau)\widetilde Q_{m+k} \Big\|_{\ell^p_{\tau,j+1}L^q} \\
		&\lesssim_{p,q,T}
		\|\mathbf q_1(\tau,m+n)\|_{\ell^1_{\tau,j}\dot H^{-1}} + \Big\|\tau \varphi_2(-2\tau A) \begin{pmatrix} \mathbf q_1(\tau,m+n) \\ 0 \end{pmatrix}\Big\|_{\ell^1_{\tau,j}(L^2 \times \dot H^{-1})}  \\
		&\quad\ + \tau\| \mathbf q_2(\tau,m+n)\|_{\ell^1_{\tau,j}\dot H^{-1}}+ \tau^{\frac34} \|\mathbf q_3(\tau,m+n)\|_{\ell^2_{\tau,j}L^1} \\
		&\lesssim \|\mathbf q_1(\tau,m+n)\|_{\ell^1_{\tau,j}\dot H^{-1}} + \tau\| \mathbf q_2(\tau,m+n)\|_{\ell^1_{\tau,j}\dot H^{-1}} + \tau^{\frac34} 	\| \mathbf q_3(\tau,m+n)\|_{\ell^2_{\tau,j} L^1} \\
		&\lesssim \tau^{\frac32}\max\{t_{j+1}^{\frac14},t_{j+1}^{\frac12}\}\Big(\opnorm{E_{m+n}}_{\tau,j}+\tau^{\frac14}  \opnorm{E_{m+n}}^3_{\tau,j}\Big).
		\end{align*} 
		The claim now follows from the definition \eqref{DefNormKorr} of the $\opnorm{\cdot}_{\tau,j}$-norm.
	\end{proof}
	
	The next core a priori estimate can be shown following the proof of Proposition \ref{PropKonvSubkrit}. One only has to replace Lemmas \ref{LemLokF} and \ref{LemStab} by Lemmas \ref{LemLokFKorr} and \ref{LemStabKorr}. We thus omit the proof.
	
	\begin{Proposition}\label{PropKonvKorr}
		Let $U=(u,\partial_tu)$, $T$, and $M$ be given by Assumption \ref{Ass} with $\alpha=3$ and let $K=\tau^{-3/2}$. Define the error $E_n$ by \eqref{KorrLie} with $K=\tau^{-3/2}$ and \eqref{DefErrorKorr}.
		Then there is a number $\tau_0>0$ depending only on $M$ and $T$, such that we have the estimate
		\[\opnorm{E_n}_{\tau,\lfloor\frac{T}{\tau}\rfloor} \lesssim_{M,T} 1 \]
		for all $\tau \in (0,\tau_0]$.
	\end{Proposition}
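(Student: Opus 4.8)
The plan is to reproduce the double-induction argument of Proposition~\ref{PropKonvSubkrit} almost verbatim, now starting from the error recursion (\ref{FehlerRekKorr}) and measuring the error in the triple norm $\opnorm{\cdot}_{\tau,j}$ of (\ref{DefNormKorr}), with Lemmas~\ref{LemLokFKorr} and~\ref{LemStabKorr} taking the roles of Lemmas~\ref{LemLokF} and~\ref{LemStab}. Let $C_1\ge1$ dominate all discrete Strichartz constants from Theorem~\ref{ThmDisStrich} and Corollaries~\ref{KorHybStrich}, \ref{KorInhomCK} for the wave-admissible triples $(4,6,\tfrac34)$ and $(4,12,1)$; these estimates apply since $K=\tau^{-3/2}\ge\tau^{-1}$ for $\tau\le1$. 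Let $C_2>0$ be the maximum of the constants from Lemmas~\ref{LemB}, \ref{LemLokFKorr}, and~\ref{LemStabKorr}. As in Proposition~\ref{PropFehlerRekKorr}, formula (\ref{FehlerRekKorr}) yields $E_n=\Pi_K E_n$ inductively, so all frequency-truncated Strichartz estimates are available for every term on its right-hand side.

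Next, fix $T_1\in(0,1]$ so small that $4C_1C_2\,T_1^{1/4}\le\tfrac12$ (recall $\max\{t^{1/4},t^{1/2}\}=t^{1/4}$ for $0<t\le T_1\le1$), set $L:=\lceil 2T/T_1\rceil\in\N$, and put $R_0:=4C_2(2C_1)^{L+1}$. Choose the maximal step size $\tau_0>0$ by $\tau_0\le T_1$ and $\tau_0^{1/4}R_0^{2}\le1$; the second condition ensures that whenever the a priori bound $\opnorm{E_{N_m+n}}_{\tau,j}\le R_0$ holds, the cubic contribution in Lemma~\ref{LemStabKorr} obeys $\tau^{1/4}\opnorm{E_{N_m+n}}_{\tau,j}^{3}\le\opnorm{E_{N_m+n}}_{\tau,j}$. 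For $\tau\in(0,\tau_0]$ set $N:=\lfloor T/\tau\rfloor$, $N_1:=\lfloor T_1/\tau\rfloor$, $N_m:=mN_1$, and $\ell:=\lfloor N/N_1\rfloor$, and decompose $[0,t_N]$ into the blocks $[t_{N_m},t_{N_{m+1}}]$ for $m<\ell$ together with $[t_{N_\ell},t_N]$, each of length at most $T_1$, exactly as in Proposition~\ref{PropKonvSubkrit}. Put $\mathrm{Err}_{-1}:=0$, $\mathrm{Err}_m:=\opnorm{E_{N_m+n}}_{\tau,N_1}$ for $m\in\{0,\dots,\ell-1\}$, and $\mathrm{Err}_\ell:=\opnorm{E_{N_\ell+n}}_{\tau,N-N_\ell}$.

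The core of the proof is the recursion $\mathrm{Err}_m\le 2C_1\mathrm{Err}_{m-1}+4C_1C_2$, which, by the geometric summation from (\ref{ErrAbs}), gives $\mathrm{Err}_m\le R_0$ for all $m$ and hence the assertion. It is shown by an inner induction on the block length $j$, assuming the outer recursion for all earlier blocks (so that $\opnorm{E_{N_m+n}}_{\tau,j}\le R_0$, cf.\ (\ref{ErrAbs1})). Applying Theorem~\ref{ThmDisStrich} and Corollaries~\ref{KorHybStrich}, \ref{KorInhomCK} to (\ref{FehlerRekKorr}) and estimating each of the three summands of $\opnorm{E_{N_m+n}}_{\tau,j+1}$ as in Proposition~\ref{PropKonvSubkrit}: the $e^{n\tau A}E_{N_m}$-term is at most $C_1\mathrm{Err}_{m-1}$; the $B$-term and the $\widetilde\Delta$-term together contribute at most $2C_1C_2$, using Lemma~\ref{LemB} (which gives $\|g(u)-g(\pi_Ku)\|_{L^1_T\dot H^{-1}}\lesssim_{M,T}K^{-1}=\tau^{3/2}$), the hybrid estimates (\ref{Hyb1}), (\ref{Hyb2}) and Bernstein's inequality, and Lemma~\ref{LemLokFKorr} — here the calibration $K=\tau^{-3/2}$ is what makes the losses $(K\tau)^{1/p}K^\gamma$ exactly cancel the $\tau$-weights of (\ref{DefNormKorr}), so these terms stay of order one; and the $\widetilde Q$-term is at most $C_1C_2\max\{t_{j+1}^{1/4},t_{j+1}^{1/2}\}\bigl(\opnorm{E_{N_m+n}}_{\tau,j}+\tau^{1/4}\opnorm{E_{N_m+n}}_{\tau,j}^{3}\bigr)$ by Lemma~\ref{LemStabKorr}. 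Replacing $\tau^{1/4}\opnorm{E_{N_m+n}}_{\tau,j}^{3}$ by $\opnorm{E_{N_m+n}}_{\tau,j}$ via the a priori bound, then using $t_{j+1}\le T_1\le1$ and the choice of $T_1$, the $\widetilde Q$-contribution is at most $\tfrac12\opnorm{E_{N_m+n}}_{\tau,j}\le\tfrac12\opnorm{E_{N_m+n}}_{\tau,j+1}$, which is absorbed into the left-hand side. The remaining terms then yield $\opnorm{E_{N_m+n}}_{\tau,j+1}\le 2C_1\mathrm{Err}_{m-1}+4C_1C_2$, closing the induction on $j$ and establishing the recursion.

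The main obstacle is exactly the cubic term produced by the correction in Lemma~\ref{LemStabKorr}: unlike Proposition~\ref{PropKonvSubkrit}, where the nonlinearity entered through a single power of the error together with a positive power of $t_{j+1}$, here $\widetilde Q$ produces $\opnorm{E}^{3}$, which can only be linearised once an a priori bound on $\opnorm{E}$ is in force. The double-induction supplies precisely this bound at level $j$ before level $j+1$ is estimated, and the extra factor $\tau^{1/4}$ in front of $\opnorm{E}^{3}$ in Lemma~\ref{LemStabKorr} makes the smallness condition $\tau_0^{1/4}R_0^{2}\le1$ sufficient to close the loop. A secondary point is bookkeeping: since $K=\tau^{-3/2}>\tau^{-1}$, the discrete Strichartz estimates carry genuine $(K\tau)^{1/p}$, $K^\gamma$ and (through Corollary~\ref{KorInhomCK}) logarithmic losses, and one must check that these are balanced against the $\tau$-powers in (\ref{DefNormKorr}) so that the $B$- and $\widetilde\Delta$-contributions remain of order one — but this balancing has already been carried out in the proofs of Lemmas~\ref{LemLokFKorr} and~\ref{LemStabKorr} and only needs to be invoked here.
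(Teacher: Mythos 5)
Your proposal is correct and is exactly the argument the paper intends: the authors omit the proof, stating only that one follows Proposition \ref{PropKonvSubkrit} with Lemmas \ref{LemLokF} and \ref{LemStab} replaced by Lemmas \ref{LemLokFKorr} and \ref{LemStabKorr}, and your write-up carries out that adaptation faithfully — the smallness condition $4C_1C_2T_1^{1/4}\le\tfrac12$ replacing (\ref{DefT1}), the step-size restriction $\tau_0^{1/4}R_0^2\le1$ replacing (\ref{Deftau0}) to linearise the cubic term, and the verification that the weights in (\ref{DefNormKorr}) exactly absorb the $(K\tau)^{1/p}K^\gamma$ losses are all the right ingredients.
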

	
	\begin{proof}[Proof of Theorem \ref{ThmKorr}]
		We take $\tau_0>0$ from Proposition \ref{PropKonvKorr} and infer
		\begin{align*}
		\|&U(t_n)-U_n\|_{L^2 \times \dot H^{-1}}\\ 
		&\le \|U(t_n)-\Pi_{K}U(t_n)\|_{L^2 \times \dot H^{-1}} + \|\Pi_{K}U(t_n)-U_n\|_{L^2 \times \dot H^{-1}}\\ 
		&\le \tau^{\frac32}\|U(t_n)\|_{\dot H^1 \times L^2} + \|E_n\|_{L^2 \times \dot H^{-1}} \lesssim_{M,T} \tau^{\frac32}
		\end{align*}
		for $\tau \le \tau_0$, using Lemma \ref{LemPi} and Proposition \ref{PropKonvKorr} with $K=\tau^{-3/2}$.
	\end{proof}

	\printbibliography

\end{document}